\documentclass[12pt,a4paper, oneside, reqno]{amsart}
\usepackage[foot]{amsaddr} 
\usepackage{mathtools}

\usepackage{amsmath, nicefrac, amsthm, verbatim, amsfonts, amssymb, xcolor, bbm}
\usepackage{graphics, xspace, enumerate}
\usepackage{enumitem}
\usepackage{stix}
\usepackage[top=30mm,bottom=30mm,left=25mm,right=25mm]{geometry}

\usepackage[colorinlistoftodos,  textsize=tiny]{todonotes} 					 
\presetkeys{todonotes}{fancyline, color=blue!30}{}

\usepackage[bb=boondox]{mathalfa}
\usepackage{graphicx}
\usepackage[colorlinks=true,citecolor=red,urlcolor=blue,linkcolor=red,bookmarksopen=true,unicode=true,pdffitwindow=false]{hyperref}
\usepackage[english]{babel}
\usepackage[languagenames,fixlanguage]{babelbib}
\hypersetup{pdfauthor={}}
\hypersetup{pdftitle={FLT for the capacity of the range of random walks}}

\hyphenation{Austau-schdienst}

\theoremstyle{plain}
\newtheorem{theorem}{Theorem}[section]
\newtheorem{corollary}[theorem]{Corollary}
\newtheorem{lemma}[theorem]{Lemma}
\newtheorem{proposition}[theorem]{Proposition}
\theoremstyle{definition}
\newtheorem{remark}[theorem]{Remark}

\newcommand {\Prob} {\ensuremath{\mathbb{P}}}
\newcommand {\R} {\ensuremath{\mathbb{R}}}

\newcommand {\N} {\ensuremath{\mathbb{N}}}

\newcommand {\distJ}{\mathrm{dist}_{\mathcal{J}_1}}

\newcommand{\cl}{\mathrm{cl}}

\newcommand{\Var}{\operatorname{Var}}

\newcommand{\bbN}{\mathbb{N}}

\newcommand{\bbR}{\mathbb{R}}

\newcommand{\bbE}{\mathbb{E}}

\newcommand{\bY}{\mathbf{Y}}
\newcommand{\bX}{\mathbf{X}}
\newcommand{\bS}{\mathbf{S}}
\newcommand{\SRange}{\{\bS(0),\ldots ,\bS(n)\}}
\newcommand{\conv}{\mathrm{conv}}

\numberwithin{equation}{section}

\addtolength{\textheight}{\topskip}

\title[Convex hulls of stable random walks]{Convex hulls of stable random walks}

\author[W.\ Cygan]{Wojciech Cygan}
\address[Wojciech Cygan]{Technische Universit\"{a}t Dresden\\ Faculty of Mathematics\\ Institute of Mathematical Stochastics\\Zellescher Weg 25, 01069 Dresden, Germany
\& 
University of Wroc\l{}aw\\
Faculty of Mathematics and Computer Science\\
Institute of Mathematics\\pl.\ Grunwaldzki 2/4\\ 50--384 Wroc\l{}aw\\ Poland}
\email{wojciech.cygan@uwr.edu.pl}

\author[N.\ Sandri\'{c}]{Nikola Sandri\'{c}}
\address[Nikola\ Sandri\'{c}]{Department of Mathematics\\University of Zagreb\\ Zagreb\\Croatia}
\email{nsandric@math.hr}

\author[S.\ \v{S}ebek]{Stjepan\ \v{S}ebek}
\address[Stjepan\ \v{S}ebek]{Department of Applied Mathematics\\
	Faculty of Electrical Engineering and Computing\\
	University of Zagreb\\ 
 Zagreb\\ 
	Croatia}
\email{stjepan.sebek@fer.hr}

\subjclass[2010]{
60G50,  
60D05, 
60F05, 
60G52 
}
\keywords{
convex hull,
random walk, stable law, domain of attraction,  intrinsic volume
}

\begin{document}
\allowdisplaybreaks[4]

\begin{abstract}
We consider convex hulls of random walks whose steps belong to the domain of attraction of a stable law in $\bbR^d$. We prove convergence of the convex hull in the space of all convex and compact subsets of $\bbR^d$, equipped with the Hausdorff distance, towards the convex hull spanned by a path of the limit stable L\'evy process. As an application, we establish convergence of (expected) intrinsic volumes under some mild moment/structure assumptions posed on the random walk. 
\end{abstract}

\maketitle

\section{Introduction}
Geometric aspects of random walks have become an attractive topic in modern probability theory, especially questions concerning the shape of convex polygons spanned by points lying on a path of a planar random walk inspired a vast number of authors. The seminal article in this direction is  \cite{Spitzer-Widom} where Spitzer and Widom found a combinatorial formula for the expected value of the perimeter of the convex hull of an arbitrary random walk in the plane. To prove this result they applied techniques from integral geometry while Baxter  \cite{Baxter} provided a purely combinatorial argument.
Later Snyder and Steele \cite{Snyder-Steele} found bounds for the variance of the perimeter, see also \cite{Nielsen-Baxter_Area} for the expected area of the convex hull.  More recently, plenty of interesting results have been obtained for planar random walks with finite variance \cite{McRedmond-Wade_EJP}, \cite{Wade-Xu_PAMS}, \cite{Wade-Xu_SPA} and for multidimensional random walks  \cite{Kabluchko-Vys-Zap_Adv}, \cite{Kabluchko-Vys-Zap_GAFA}, \cite{McRedmond_survey}, \cite{Vysotsky}. We refer to \cite{Majumdar_EvS} for a historical perspective and a handy presentation of available results for convex hulls of random walks.

In this article, we consider a class of stable random walks in the multidimensional case. 
Let $\{\bY_i\}_{i\in\N}$ be a sequence of independent and identically distributed $\R^d$-valued random vectors defined on a given probability space $(\Omega, \mathcal{A},\mathbb{P})$
 and let $\bS(n)=\sum_{i=1}^n \bY_i$ be the corresponding  random walk such that $\bS(0)=0$. We assume that the distribution of the step $\bY_1$ belongs to the domain of attraction  of an $\alpha$-stable law in $\R^d$ with index of stability $\alpha\in(0,2]$. This means that 
there are sequences $\{b_n\}_{n\ge0}\subset(0,\infty)$ and $\{\mathbf{a}_n\}_{n\ge0}\subset\R^d$, such that 
\begin{equation}\label{DOM-ATT}
\frac{\bS(n)-\mathbf{a}_n}{b_n}\xrightarrow[n\nearrow\infty]{\mathcal{D}} \bX(1),
\end{equation}
where $\{\bX(t)\}_{t\ge0}$ is an $\alpha$-stable L\'evy process in $\R^d$. The notation\, $\xrightarrow{\mathcal{D}}$ is used for convergence in distribution of random vectors in $\R^d$.

The main object of our study is the convex hull generated by the set $\{\bS(0),\bS(1),\ldots ,\bS(n)\}$ which we denote by $\mathrm{conv} \{\bS(0),\bS(1),\ldots ,\bS(n)\}$. 
So far not much is known about its limit behaviour if the step distribution of the walk is heavy-tailed. Asymptotic expansions of the mean perimeter and area of $\conv  \SRange$ were found in \cite{Grebenkov} for planar random walks whose steps have stable-like symmetric continuous densities. There are a few articles on convex hulls of stable (and L\'{e}vy) processes in $\R^d$ which are strongly related to our work, see  \cite{Kampf-Molchanov} , \cite{Molchanov_JMA} and \cite{Molchanov-Wespi}.  We also refer to \cite{Eldan} for the case of the convex hull of Brownian motion, to \cite{Alsmeyer} for the treatment on convex minorants of one-dimensional stable random walks, and 
to \cite{Hull-Reg-Var_proc} for convergence of convex hulls of point processes under low-moment assumptions. 

Our first goal is to enlighten the connection between convex hulls of discrete and continuous-time processes with infinite variance and to understand under which scaling procedure we can approximate the convex hull of the limit process with those of random walks.
We thus investigate convergence of convex hulls of stable random walks at the level of sets in the space of all convex and compacts subsets of $\bbR^d$ (so-called convex bodies) equipped with the Hausdorff distance. Such convergence holds for all stable walks that have infinite first moment (case $\alpha <1$). For random walks attracted by a Cauchy law (case $\alpha =1$) our methods apply only if the walk is symmetric. For stable random walks with finite expectation (case $1< \alpha \le 2$) we distinguish between two cases, that is when the drift is zero or not. In the case with zero drift (and for the cases mentioned above) the convex hull of the walk, rescaled with the sequence $b_n$ from \eqref{DOM-ATT}, converges to the convex hull generated by a path of the limit process $\{\bX(t)\}_{t\ge0}$ run up to time one. If there is a non-zero drift such scaling turns out to be inappropriate.
In this context, we learned much about the scaling limits of convex hulls and how to handle the non-zero drift case from articles \cite{McRedmond_survey} and \cite{Wade-Xu_SPA}.
We apply techniques developed in these both articles to adjust the scaling along the drift and we obtain corresponding convergence of rescaled convex hulls. 

There are some basic geometric functionals which enable us to describe and study the shape of a convex body in the Euclidean space. In the plane these are perimeter and area (also diameter) whereas in higher dimensions surface area and volume play the key role. 
The other important quantities are involved in the celebrated Steiner formula which shows how fast grows the volume of a convex body which is expanded by a rescaled unit ball. This volume is represented as a polynomial of degree $d$ and its coefficients are (up to a constant) the so-called intrinsic volumes of the convex body. They can be regarded as a generalization of surface area and volume as the pre-last and last intrinsic volumes actually coincide with those two functionals and the first intrinsic volume can be seen as a counterpart of perimeter. These geometric quantities have been recently studied also for convex hulls of random walks and continuous time stochastic processes. 
We were inspired by article \cite{Vysotsky} where the authors established precise formulas for the expected value of intrinsic volumes of $\conv \SRange$ under a general position assumption on the walk which means that it does not stay almost surely in any affine hyperplane of $\R^d$, 
see also \cite{Nielsen-Baxter_Area}, \cite{Kabluchko-Vys-Zap_Adv} and \cite{Kabluchko-Vys-Zap_GAFA}. 
Another motivation for our work came from article \cite{Molchanov-Wespi} where the authors computed expected intrinsic volumes of the convex hull for L\'{e}vy processes, see also \cite{Kampf-Molchanov} and \cite{Molchanov_JMA}  for the case of symmetric stable processes. 

In this article, we establish convergence of intrinsic volumes of the convex hull of stable random walks. If the walk has no drift (or $\alpha<1$, or $\alpha=1$ and $\{\bS(n)\}_{n\ge0}$ is symmetric) then by a continuity argument we infer that the $m$-th intrinsic volume of $\conv \SRange$, scaled by $b_n^m$, converges towards the corresponding intrinsic volume of the hull spanned by a path of the limit process $\{\bX(t)\}_{t\ge0}$. Similarly as before, if the drift does not vanish an another scaling has to be  employed which is found by an approach based upon \cite{Wade-Xu_SPA}.

The main part of the article is devoted to convergence of the expected intrinsic volumes of the convex hull.
 In the zero-drift case we show that under the general position assumption 
the expected $m$-th intrinsic volume, scaled by $b_n^m$, converges to a limit 
given in terms of the Gram determinant spanned by independent copies of the process $\{\bX(t)\}_{t\ge0}$. If the process is symmetric this determinant can be computed through techniques developed in \cite{Molchanov_JMA} and \cite{Molchanov-Wespi}. Moreover, via our methods we obtain the result that (even for a non-symmetric stable process) the first expected intrinsic volume of its convex hull is determined by its first absolute moment. 
For non-zero drift random walks we need finiteness of absolute moments of $\bY_1$ of higher order which implies that $\alpha =2$ and $\{\bX(t)\}_{t\ge0}$  is a Brownian motion. Then, appropriately rescaled, $m$-th expected intrinsic volume of the convex hull of the walk converges to the convex hull spanned by a time-space Brownian motion which is constructed from the original process $\{\bX(t)\}_{t\ge0}$. As an additional result, we establish a closed formula for the expected volume of the convex hull of the time-space Brownian motion. 

Finally, we study asymptotics of the variance of intrinsic volumes of the convex hull of random walks with finite moments of order higher than two and under the general position assumption. For zero-drift random walks we find an appropriate scaling for all $m\in\{1,\ldots ,d\}$, while if there is a non-zero drift we only cover the case $m\ge2$. For the first intrinsic volume we establish an upper bound with the term of linear order.

The article is organized as follows. We start by a brief discussion on necessary definitions and results from geometry and probability. In Section \ref{sec:Hulls} we present convergence of convex hulls in the space of convex bodies as well as convergence of their  intrinsic volumes. Section \ref{sec:Means} is devoted to convergence of mean intrinsic volumes while in Section \ref{sec:Var} we focus on variance asymptotics.

\subsection*{Stable random walks}
We use notation
$\bY_1=(Y^{(1)}_1,\dots,Y^{(d)}_1)$ 
 and $\bS(n)=(S^{(1)}(n),\dots,S^{(d)}(n))$.
We remark that \eqref{DOM-ATT} and continuous mapping theorem  imply that for any $k=1,\dots, d$, 
\begin{equation}\label{COOR-DOM-ATT}
\frac{S^{(k)}(n)-a^{(k)}_n}{b_n}\xrightarrow[n\nearrow\infty]{\mathcal{D}} X^{(k)}(1),
\end{equation} 
where $\mathbf{a}_n=(a^{(1)}_n,\dots,a_n^{(d)})$ and $\bX(t)=(X^{(1)}(t),\dots,X^{(d)}(t)).$ 
It follows that each $\{X^{(k)}(t)\}_{t\ge0}$ is a one-dimensional $\alpha$-stable L\'evy process and whence the coordinates of $\mathbf{a}_n$ are given by
\begin{equation}\label{a_n}
a_n^{(k)}=\begin{cases}
0, & \alpha<1,\\
nb_n\mathbb{E}\bigl[\sin(Y^{(k)}_1/b_n)\bigr], & \alpha=1,\\
n\,\mathbb{E}[Y^{(k)}_1],& \alpha>1.
\end{cases}
\end{equation}
Moreover, $b_n=n^{1/\alpha}\ell(n)$ for a function $\ell(u)$ which is slowly varying at infinity.	
We observe that \cite[Theorem 2.7]{Skorohod-1957} and \eqref{COOR-DOM-ATT} imply the following convergence
$$
\left\{\frac{S^{(k)}(\lfloor nt\rfloor)-a^{(k)}_{\lfloor nt\rfloor}}{b_n}\right\}_{t\ge0}\xrightarrow[n\nearrow\infty]{\mathcal{J}^{1}_1}\{X^{(k)}(t)\}_{t\ge0}.
$$ 
This, together with \cite[Problem 5.9]{Billingsley-book}, gives that $\{(\bS(\lfloor nt\rfloor)-\mathbf{a}_{\lfloor nt\rfloor})/b_n\}_{t\ge0}$ is tight. By
$\mathcal{J}_1^d$ we denote the Skorokhod topology on the space of $\R^d$-valued c\`{a}dl\`{a}g functions.
It is straightforward to check that finite-dimensional distributions of $\{(\bS(\lfloor nt\rfloor)-\mathbf{a}_{\lfloor nt\rfloor})/b_n\}_{t\ge0}$ converge to those of $\{\bX(t)\}_{t\ge0}$ (see e.g. \cite[page 88]{Billingsley-book}). It follows
\begin{equation}\label{FCLT}
\left\{\frac{\bS(\lfloor nt\rfloor)-\mathbf{a}_{\lfloor nt\rfloor}}{b_n}\right\}_{t\ge0}\xrightarrow[n\nearrow\infty]{\mathcal{J}^{d}_1}\{\bX(t)\}_{t\ge0}.
\end{equation} 
We point out  that  \eqref{DOM-ATT} and \eqref{FCLT} are actually equivalent, see \cite[Proposition VI.3.14]{Jacod}.
For a detailed discussion on multidimensional stable laws and their domains of attraction we refer to \cite{Samorodnitsky}, see also \cite{Resnick_JMultivAnal} and \cite{Rvaceva}.

\subsection*{Geometric issues and c\`{a}dl\`{a}g paths}
We write $\Vert x \Vert$ for the Euclidean norm of $x\in \R^d$ and 
let $\mathbb{S}^{d-1}=\{x\in \R^d: \Vert x\Vert =1\}$ be the unit sphere in $\R^d$. For $x\in\R^d$ and $A\subset\R^d$ the distance from $x$ to $A$ is defined as $\mathrm{dist}(x,A)=\inf_{y\in A}\Vert x-y\Vert$. For a bounded set $A\subset\R^d$ we denote by $\mathrm{cl}\, A$ the closure of $A$.
For two bounded sets $A,B\subset\R^d$, let $\mathrm{dist}_\mathcal{H}(A,B)$ be the Hausdorff  distance between $A$ and $B$ defined as
\begin{align*}
\mathrm{dist}_\mathcal{H}(A,B)=\max\big\{\sup_{x\in B}\mathrm{dist}(x,A),\sup_{x\in A}\mathrm{dist}(x,B)\big\}.
\end{align*}
Note that on the family of bounded sets this defines merely a pseudo-metric, while on the family of compact sets it becomes a metric.

Let $\mathcal{K}^d$ denote the family of all convex bodies in $\R^d$  and let $\mathcal{K}_0^d=\{A\in \mathcal{K}^d:0\in A\}$.
By $\mathrm{conv}\, A$ we denote the convex hull of the set $A$. 
The space $\mathcal{K}^d$ equipped with the topology generated by the Hausdorff metric becomes a complete metric space. Our main reference for convex geometry is \cite{Schneider-book}.

Let $\mathcal{D}^d_0[0,1]$ be the space of c\`{a}dl\`{a}g functions $f:[0,1]\to\R^d$ with $f(0)=0.$ We equip it with the standard $\mathcal{J}_1^d$ Skorokhod topology defined  through the metric
\begin{align*}
\distJ(f,g)= \inf_{\lambda \in \Lambda}\{ \Vert f-g\circ \lambda\Vert_\infty + \Vert \lambda - \mathrm{Id}\Vert_\infty\},
\end{align*}
where $\Lambda$ is the set of all increasing bijections $\lambda$ of $[0,1]$ such that $\lambda$ and its inverse $\lambda^{-1}$ are both continuous, and $\mathrm{Id}$ is the identity map on $[0,1]$.
To any $f\in \mathcal{D}^d_0[0,1]$ we associate the   convex hull of the closure of its path, that is 
\begin{align*}
f\mapsto H(f)=\mathrm{conv}f[0,1],
\end{align*} 
where $f[0,1] = \mathrm{cl }\{f(t):t\in[0,1]\}$. We remark that 
$$\mathrm{conv}f[0,1] = \mathrm{cl}\, \mathrm{conv}\{f(t)\colon t\in [0,1]\},
$$
see \cite[Proposition 3.2]{Gruber_book}.
The following continuity result is a crucial observation for our purposes. It has been already found in \cite{Molchanov-Wespi} and applied in the context of convex hulls of stable L\'evy processes.

\begin{lemma}\label{H-ctns}
The mapping $H\colon (\mathcal{D}^d_0[0,1], \mathrm{dist}_{\mathcal{J}_1}) \to (\mathcal{K}_0^d, \mathrm{dist}_\mathcal{H})$ is Lipschitz continuous.
	\end{lemma}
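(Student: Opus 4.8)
The plan is to show that $H$ is $1$-Lipschitz by comparing the support functions of $H(f)$ and $H(g)$. Recall that for a convex body $K\in\mathcal{K}^d$ the support function is $h_K(u)=\sup_{x\in K}\langle x,u\rangle$ for $u\in\mathbb{S}^{d-1}$, and that the Hausdorff distance admits the representation $\mathrm{dist}_\mathcal{H}(K,L)=\sup_{u\in\mathbb{S}^{d-1}}\aps{h_K(u)-h_L(u)}$ (see \cite{Schneider-book}). Since for a path $f\in\mathcal{D}^d_0[0,1]$ we have $h_{H(f)}(u)=\sup_{t\in[0,1]}\langle f(t),u\rangle$ (the supremum over the path equals the supremum over its convex hull), it suffices to bound $\sup_{u\in\mathbb{S}^{d-1}}\bigl\vert\sup_{t}\langle f(t),u\rangle-\sup_{t}\langle g(t),u\rangle\bigr\vert$ in terms of $\distJ(f,g)$.

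The key step is the following: fix $\varepsilon>0$ and pick $\lambda\in\Lambda$ with $\Vert f-g\circ\lambda\Vert_\infty+\Vert\lambda-\mathrm{Id}\Vert_\infty<\distJ(f,g)+\varepsilon$. For any $u\in\mathbb{S}^{d-1}$ and any $t\in[0,1]$, Cauchy--Schwarz gives $\langle f(t),u\rangle=\langle g(\lambda(t)),u\rangle+\langle f(t)-g(\lambda(t)),u\rangle\le \sup_{s\in[0,1]}\langle g(s),u\rangle+\Vert f-g\circ\lambda\Vert_\infty$, because $\lambda$ maps $[0,1]$ onto $[0,1]$. Taking the supremum over $t$ yields $h_{H(f)}(u)\le h_{H(g)}(u)+\Vert f-g\circ\lambda\Vert_\infty$. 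By symmetry (using $\lambda^{-1}\in\Lambda$) the reverse inequality holds with the same bound, so $\aps{h_{H(f)}(u)-h_{H(g)}(u)}\le\Vert f-g\circ\lambda\Vert_\infty<\distJ(f,g)+\varepsilon$ for every $u$. Hence $\mathrm{dist}_\mathcal{H}(H(f),H(g))\le\distJ(f,g)+\varepsilon$, and letting $\varepsilon\downarrow0$ gives $\mathrm{dist}_\mathcal{H}(H(f),H(g))\le\distJ(f,g)$. Finally, $0=f(0)\in H(f)$ shows $H$ indeed takes values in $\mathcal{K}_0^d$, and compactness of $f[0,1]$ (continuous image, up to the càdlàg closure, of a compact set) ensures $H(f)$ is a genuine convex body.

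I do not expect a serious obstacle here; the argument is essentially a soft manipulation of support functions. The one point that deserves care is the observation that the time-change $\lambda$ is a \emph{bijection} of $[0,1]$, so that replacing the index set $\{t\}$ by $\{\lambda(t)\}$ does not shrink the supremum — this is why only the uniform term $\Vert f-g\circ\lambda\Vert_\infty$ enters and the $\Vert\lambda-\mathrm{Id}\Vert_\infty$ term may simply be discarded (it only helps, giving the displayed inequality with room to spare). A secondary, purely technical, point is justifying $h_{H(f)}(u)=\sup_{t\in[0,1]}\langle f(t),u\rangle$, i.e.\ that passing to the closed convex hull does not change the supremum of a linear functional; this is immediate since $\langle\cdot,u\rangle$ is linear and continuous, and is exactly the content of the identity $\mathrm{conv}\,f[0,1]=\mathrm{cl}\,\mathrm{conv}\{f(t):t\in[0,1]\}$ recalled above from \cite{Gruber_book}.
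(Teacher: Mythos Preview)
Your argument is correct and establishes the $1$-Lipschitz bound, just as the paper does. The underlying idea is the same in both proofs: a time-change $\lambda\in\Lambda$ is a bijection of $[0,1]$, so the range of $g\circ\lambda$ equals the range of $g$, and hence only the uniform term $\lVert f-g\circ\lambda\rVert_\infty$ matters.

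The packaging differs slightly. The paper factors the estimate through two general facts about the Hausdorff pseudo-metric, namely that $\mathrm{dist}_\mathcal{H}(\mathrm{conv}\,A,\mathrm{conv}\,B)\le\mathrm{dist}_\mathcal{H}(A,B)$ and $\mathrm{dist}_\mathcal{H}(f[0,1],g[0,1])\le\mathrm{dist}_{\mathcal{J}_1}(f,g)$, both cited from \cite{McRedmond_PhD_thesis}. You instead work directly with support functions and the identity $\mathrm{dist}_\mathcal{H}(K,L)=\sup_{u\in\mathbb{S}^{d-1}}\lvert h_K(u)-h_L(u)\rvert$, which makes the proof self-contained and avoids the external references. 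Neither route offers a real advantage over the other; your version is perhaps a touch more transparent about \emph{why} the $\lVert\lambda-\mathrm{Id}\rVert_\infty$ term can be dropped.
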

\begin{proof}

	For any two bounded sets $A,B\subset\R^d$ it holds that 
	$\mathrm{dist}_\mathcal{H}(\mathrm{cl}\, A,\mathrm{cl}\, B)= \mathrm{dist}_\mathcal{H}(A,B)$ and 
$$	
	\mathrm{dist}_\mathcal{H}(\mathrm{conv} A,\mathrm{conv} B)\le \mathrm{dist}_\mathcal{H}(A,B),$$
	see e.g.\ \cite[Lemmas 3.2.2 and 3.3.1]{McRedmond_PhD_thesis}.
	 It follows 
	 $$
	\mathrm{dist}_\mathcal{H}\bigl(H(f),H(g)\bigr)\le\mathrm{dist}_\mathcal{H}\bigl(f[0,1],g[0,1]\bigr).
	$$ 
	Finally, we remark that
	$$
	\mathrm{dist}_\mathcal{H}\bigl(H(f),H(g)\bigr)\le\mathrm{dist}_{\mathcal{J}_1}(f,g),$$
	see e.g.\ \cite[Lemma 3.2.1]{McRedmond_PhD_thesis}.
	\end{proof}

We recall definitions of a few basic geometric functionals which can be defined for any convex body. Later we  investigate them for convex hulls of stable random walks. For $A\in \mathcal{K}^d$ its support function is defined by $$s_A(x)=\sup_{y\in A}\langle x,y\rangle,$$ where $\langle\cdot,\cdot \rangle$ stands for the standard scalar product in $\R^d$. The mean width and Steiner point of $A\in \mathcal{K}^d$ are respectively defined as
\begin{equation}\label{MW}
\mathsf{w}(A)=\frac{2}{\varpi_d}\int_{\mathbb{S}^{d-1}}s_A(\theta)\sigma (\mathrm{d}\theta)\qquad\text{and}\qquad 
\mathsf{p}(A)=\frac{1}{\kappa_d}\int_{\mathbb{S}^{d-1}}s_A(\theta)\theta \sigma (\mathrm{d}\theta),
\end{equation} 
where $\sigma(\mathrm{d}\theta)$ is the surface measure over $\mathbb{S}^{d-1}$ and
$\varpi_d= \sigma (\mathbb{S}^{d-1})$, while
$\kappa_{d} = \mathrm{Vol}_{d} (B^{d})$ is the $d$-dimensional Lebesgue measure of the unit ball $B^d=\{x\in\R^d:\|x\|\le1\}$. The both numbers are given by
$
\kappa_{d}  = \pi^{d/2}/\Gamma \big(1+\frac{d}{2}\big)
$
and
$
\varpi_d =d\kappa_d.
$
 For any $\rho\ge0$ the outer parallel body of $A$ at distance $\rho$ is defined as $A+\rho B^d$. The classical Steiner formula provides an expansion for its $d$-dimensional Lebesgue measure 
in terms of a polynomial of degree at most $d$ whose coefficients are important geometric quantities. The polynomial is of the form
\begin{align}\label{Steiner}
\mathrm{Vol}_d\bigl(A+\rho B^d\bigr)=\sum_{m=0}^d \rho^{d-m} \kappa_{d-m}
V_m(A),
\end{align}
where 
$V_0(A),\ldots ,V_{d}(A)$ are so-called intrinsic volumes of the set $A$.
 It is known that $V_0(A)=1$ and  $V_1(A)$ is proportional to the mean width of $A$, that is 
\begin{align}\label{V_1=mean_width}
V_1(A)=\frac{d\kappa_d}{2\kappa_{d-1}}\mathsf{w}(A).
\end{align}
We remark that if $d=2$ then $V_1(A)$ is equal to one half of the perimeter of $A$.
Furthermore, $V_{d-1}(A)$ is equal to one half of the surface area of $A$ and $V_{d}(A)=\mathrm{Vol}_d(A)$. 
 We remark that $A\mapsto V_m(A) $, $m\in \{0,1,\ldots ,d\}$, and $A\mapsto \mathsf{p}(A) $  are  continuous  mappings from $(\mathcal{K}^d,\mathrm{dist}_{\mathcal{H}})$ to $([0,\infty),|\cdot|)$ and from $(\mathcal{K}^d,\mathrm{dist}_{\mathcal{H}})$ to $(\R^d, \Vert\! \cdot\! \Vert)$, respectively, see \cite[Theorem III.1.1]{Schneider} and \cite[Lemma 1.8.14]{Schneider-book}.


\section{Convergence of convex hulls}\label{sec:Hulls}
We start by showing a general weak-convergence result for convex hulls of stable random walks in the space of convex bodies equipped with the Hausdorff distance. 
The scaling of order $b_n$ coming from assumption \eqref{DOM-ATT} is appropriate if there is no centering. We thus investigate separately the case when the walk has finite non-zero expectation. 
The following result should be compared with \cite[Theorem 2.5 and Corollary 2.6]{Wade-Xu_SPA}.

\begin{proposition}\label{FCLT-HULL}
	Under assumption \eqref{DOM-ATT} it holds 
	$$
	\frac{\mathrm{conv}\{\bS(0)-\mathbf{a}_0,\ldots,\bS(n)-\mathbf{a}_n\}}{b_n}\xRightarrow[n\nearrow\infty]{} \mathrm{conv}\,\bX[0,1]
	$$
	 in the sense of weak convergence in  $(\mathcal{K}_0^d,\mathrm{dist}_{\mathcal{H}})$. 
We also have for $m\in\{1,\ldots ,d\}$,
$$
\frac{V_m\bigl(\mathrm{conv}\{\bS(0)-\mathbf{a}_0,\ldots,\bS(n)-\mathbf{a}_n\}\bigr)}{b^m_n}\xrightarrow[n\nearrow\infty]{\mathcal{D}} 
V_m \bigl(\mathrm{conv}\, \bX[0,1] \bigr)
$$
and 
$$\frac{\mathsf{p}\bigl(\mathrm{conv}\{\bS(0)-\mathbf{a}_0,\ldots,\bS(n)-\mathbf{a}_n\}\bigr)}{b_n}\xrightarrow[n\nearrow\infty]{\mathcal{D}} 
\mathsf{p} \bigl(\mathrm{conv}\, \bX[0,1] \bigr).
$$
\end{proposition}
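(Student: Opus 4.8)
The plan is to realise the rescaled convex hull of the walk as the image of a suitably rescaled c\`{a}dl\`{a}g path under the continuous map $H$ of Lemma~\ref{H-ctns}, and then to chain the continuous mapping theorem with the functional limit theorem~\eqref{FCLT}.

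Write $K_n=\mathrm{conv}\{\bS(0)-\mathbf{a}_0,\ldots,\bS(n)-\mathbf{a}_n\}$ and, for each $n\ge1$, introduce the rescaled step path
\begin{equation*}
\bS_n(t)=\frac{\bS(\lfloor nt\rfloor)-\mathbf{a}_{\lfloor nt\rfloor}}{b_n},\qquad t\in[0,1].
\end{equation*}
Since $t\mapsto\lfloor nt\rfloor$ is c\`{a}dl\`{a}g and piecewise constant on $[0,1]$ and $\mathbf{a}_0=0$, the path $\bS_n$ belongs to $\mathcal{D}^d_0[0,1]$, and its range over $[0,1]$ is the finite — hence closed — set $\{(\bS(k)-\mathbf{a}_k)/b_n:k=0,\ldots,n\}$. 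Therefore
\begin{equation*}
H(\bS_n)=\mathrm{conv}\{\bS_n(t):t\in[0,1]\}=\frac{1}{b_n}\,K_n.
\end{equation*}
By \eqref{FCLT} restricted to the compact interval $[0,1]$ — which is legitimate because a fixed time is almost surely a continuity point of the L\'evy process $\bX$ — we have $\bS_n\xRightarrow[n\nearrow\infty]{}\bX$ in $(\mathcal{D}^d_0[0,1],\distJ)$. As $H$ is continuous by Lemma~\ref{H-ctns}, the continuous mapping theorem yields $H(\bS_n)\xRightarrow[n\nearrow\infty]{}H(\bX)=\mathrm{conv}\,\bX[0,1]$ in $(\mathcal{K}_0^d,\mathrm{dist}_\mathcal{H})$, which is the first assertion.

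For the remaining two assertions I would compose once more with the functionals $V_m$ and $\mathsf{p}$, recalled above to be continuous from $(\mathcal{K}^d,\mathrm{dist}_\mathcal{H})$ to $([0,\infty),|\cdot|)$ and to $(\R^d,\Vert\cdot\Vert)$, and exploit their homogeneity: the Steiner formula \eqref{Steiner} gives $V_m(\lambda A)=\lambda^m V_m(A)$, and the definition \eqref{MW} of the Steiner point through the support function gives $\mathsf{p}(\lambda A)=\lambda\,\mathsf{p}(A)$, for every $\lambda>0$. Hence $b_n^{-m}V_m(K_n)=V_m(H(\bS_n))$ and $b_n^{-1}\mathsf{p}(K_n)=\mathsf{p}(H(\bS_n))$, and a further application of the continuous mapping theorem to the first assertion gives the asserted convergence in distribution to $V_m(\mathrm{conv}\,\bX[0,1])$ and to $\mathsf{p}(\mathrm{conv}\,\bX[0,1])$, respectively.

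I do not anticipate a genuine obstacle: the only points requiring care are the identification $H(\bS_n)=K_n/b_n$ — which hinges on the step path visiting exactly the centred points $(\bS(k)-\mathbf{a}_k)/b_n$ and on $\mathbf{a}_0=0$ — and the passage of the functional convergence from $[0,\infty)$ to $[0,1]$; everything else is a routine composition of continuous maps with Lemma~\ref{H-ctns} and the already cited continuity and homogeneity of $V_m$ and $\mathsf{p}$.
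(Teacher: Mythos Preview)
Your proof is correct and follows essentially the same route as the paper: identify the rescaled hull as $H(\bS_n)$, invoke the functional limit theorem \eqref{FCLT} together with Lemma~\ref{H-ctns} and the continuous mapping theorem, and then compose with the continuous, homogeneous functionals $V_m$ and $\mathsf{p}$. The only cosmetic differences are that you spell out $\mathbf{a}_0=0$ and the restriction from $[0,\infty)$ to $[0,1]$, which the paper leaves implicit.
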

	
\begin{proof} 
	Observe first that
 for any bounded set $A\subset\R^d$, any  $\mathbf{a}\in\R^d$ and any $M\in\R^{d\times d}$, we have 
	$\mathrm{conv}(A+\mathbf{a})=\mathrm{conv}(A)+\mathbf{a},\ \mathrm{conv}(M\cdot A)=M\cdot\mathrm{conv}(A),\ \cl( A+\mathbf{a})=\cl(A)+\mathbf{a}$ and $\cl(M\cdot A)=M\cdot\cl (A).$
	It follows
	$$
	H\left(\bigl\{(\bS(\lfloor nt\rfloor)-\mathbf{a}_{\lfloor nt\rfloor})/b_n\bigr\}_{t\in[0,1]}\right)=\frac{\mathrm{conv}\{\bS(0)-\mathbf{a}_0,\ldots,\bS(n)-\mathbf{a}_n\}}{b_n}.
	$$ 
	Finally, \eqref{FCLT} together with continuous mapping theorem implies
	$$
	\frac{\mathrm{conv}\{\bS(0)-\mathbf{a}_0,\ldots,\bS(n)-\mathbf{a}_n\}}{b_n}\xRightarrow[n\nearrow\infty]{}H\bigl(\{\bX(t):t\in[0,1]\}\bigr),
	$$
	as desired. 
	By employing again continuous mapping theorem and the facts that for $A\in\mathcal{K}^d$ and $a\in\R_+$ (see the discussion after  \cite[Theorem III.1.1]{Schneider}) it holds $$ V_m(a\cdot A)=a^mV_m(A)\qquad \text{and}\qquad \mathsf{p}(a\cdot A)=a\mathsf{p}(A),$$  we infer the last two formulas.
\end{proof}

\begin{remark}\label{REM1}
(i) Proposition \ref{FCLT-HULL} provides information on the limit behaviour for the convex hull $\mathrm{conv}\SRange$ only in the case when $\mathbf{a}_n\equiv 0$. This is true if $\alpha <1$ and if the walk has zero drift for $\alpha>1$. For $\alpha =1$ it covers the case when the walk is symmetric, cf.\ \eqref{a_n}. The non-symmetric case for $\alpha =1$ is not handled in the present article. \\
(ii) If we assume that $\bbE[\lVert \bY_1\rVert^2]<\infty$, $\mu=0$ and define $\Sigma=\bbE[\bY_1 \bY_1^t]$, then $b_n=\sqrt{n}$ (see \cite[Theorem 2.6.6]{Ibragimov-Linnik}) and  
$$
\{\bX(t)\}_{t\ge0}\stackrel{\mathcal{D}}{=}\{\Sigma^{1/2}\mathbf{B}(t)\}_{t\ge0},
$$ 
where $\{\mathbf{B}(t)\}_{t\ge0}$ is the standard $d$-dimensional Brownian motion. In view of Proposition \ref{FCLT-HULL} we obtain
 $$
 \frac{V_m\bigl(\mathrm{conv}\{\bS(0),\ldots,\bS(n)\}\bigr)}{n^{m/2}}\xrightarrow[n\nearrow\infty]{\mathcal{D}} 
 V_m \bigl(\Sigma^{1/2}\mathrm{conv}\, \mathbf{B}[0,1] \bigr).
 $$ In particular,
 $$\frac{V_d\bigl(\mathrm{conv}\{\bS(0),\ldots,\bS(n)\}\bigr)}{n^{d/2}}\xrightarrow[n\nearrow\infty]{\mathcal{D}} 
 \sqrt{\det(\Sigma)}V_d \bigl(\mathrm{conv}\, \mathbf{B}[0,1] \bigr).$$
\end{remark}

\subsection{Non-zero drift case}
In this paragraph, we assume that
 $\alpha>1$ which implies that the first moment of $\bY_1$ is finite.
We consider the case when $\mu=\mathbb{E}[\bY_1]\neq 0$. 
We use notation
$$
V_m(n)=V_m\bigl(\mathrm{conv}\{\bS(0),\ldots,\bS(n)\}\bigr)
\qquad\text{and}\qquad 
P(n)=\mathsf{p}\bigl(\mathrm{conv}\{\bS(0),\ldots,\bS(n)\}\bigr).$$ 
We first establish an almost sure convergence for $V_1(n)$ and $P(n)$. We will use it later to obtain convergence of means in Section \ref{sec:Means}. We remark that almost sure convergence of $V_1(n)$ was established in \cite[Theorem 6.11]{McRedmond_survey}  and \cite[Theorem 1.1]{McRedmond-Wade_EJP} for planar random walks with finite first moment, see also \cite{Snyder-Steele} for random walks with finite variance.

\begin{theorem}\label{V_1_p_drift}
Assume \eqref{DOM-ATT}. Let $\alpha >1$ and $\mu\neq 0$.
It holds
\begin{align*}
\frac{V_1(n)}{n}
\xrightarrow[n\nearrow\infty]{\mathbb{P}\text{-a.s.}} \Vert \mu\Vert
\qquad
\mathrm{and}\qquad
\frac{P(n)}{n}
\xrightarrow[n\nearrow\infty]{\mathbb{P}\text{-a.s.}} \frac{\mu}{2}.
\end{align*} 
\end{theorem}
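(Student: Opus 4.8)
The plan is to exploit the law of large numbers for the random walk to reduce the geometric problem to a statement about a line segment. First I would observe that by the strong law of large numbers, $\bS(n)/n \to \mu$ almost surely; more precisely, for every $\varepsilon>0$ there is almost surely a (random) $N$ such that $\|\bS(k)-k\mu\|\le \varepsilon k$ for all $k\ge N$, and $\max_{k\le N}\|\bS(k)-k\mu\|$ is a fixed finite random constant. Rescaling by $1/n$, the rescaled walk $\{\bS(\lfloor nt\rfloor)/n\}_{t\in[0,1]}$ stays uniformly close to the segment $\{t\mu : t\in[0,1]\}$: the Hausdorff distance between the closed rescaled path and this segment tends to $0$ almost surely. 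Indeed the ``early'' part $k\le N$ contributes at most $(\text{const}+\|\mu\|N)/n\to0$, and the ``late'' part $k>N$ is within $\varepsilon$ of the segment plus the $o(1)$ discretization error.

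Next I would invoke Lemma~\ref{H-ctns} (or rather its proof, which shows $\mathrm{dist}_\mathcal{H}(\mathrm{conv}A,\mathrm{conv}B)\le\mathrm{dist}_\mathcal{H}(A,B)$) to conclude that
$$
\mathrm{dist}_\mathcal{H}\!\left(\frac{\mathrm{conv}\{\bS(0),\ldots,\bS(n)\}}{n},\ \mathrm{conv}\{t\mu:t\in[0,1]\}\right)\xrightarrow[n\nearrow\infty]{\mathbb{P}\text{-a.s.}}0,
$$
and the limiting set is just the segment $[0,\mu]\subset\R^d$. Then by the continuity of $A\mapsto V_1(A)$ and $A\mapsto\mathsf p(A)$ on $(\mathcal{K}^d,\mathrm{dist}_\mathcal{H})$ together with the homogeneity relations $V_1(a\cdot A)=aV_1(A)$ and $\mathsf p(a\cdot A)=a\,\mathsf p(A)$ (both recalled in the proof of Proposition~\ref{FCLT-HULL}), I would get $V_1(n)/n\to V_1([0,\mu])$ and $P(n)/n\to\mathsf p([0,\mu])$ almost surely. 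It remains to compute these two quantities for a segment: the first intrinsic volume of a segment of length $\|\mu\|$ equals its length $\|\mu\|$ (consistent with $V_1$ being half the perimeter of a degenerate ``digon''), and the Steiner point of a segment is its midpoint, so $\mathsf p([0,\mu])=\mu/2$. For $V_1$ one can either quote that $V_1$ of a segment is its length directly, or compute via \eqref{V_1=mean_width} that $\mathsf w([0,\mu])=\frac{2\kappa_{d-1}}{d\kappa_d}\int_{\mathbb S^{d-1}}|\langle\theta,\mu\rangle|\sigma(\mathrm d\theta)/\varpi_d$-type integral evaluates to give $V_1=\|\mu\|$; the cleanest route is the known normalization that $V_1$ restricted to segments is length.

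The main obstacle, such as it is, is purely bookkeeping: making the ``early part of the walk is negligible after rescaling'' argument precise and uniform, i.e.\ verifying that the Hausdorff distance estimate holds with the right almost-sure quantifiers. This is where care is needed because the bound $\|\bS(k)-k\mu\|\le\varepsilon k$ only kicks in past a random threshold $N$, and one must check both directions of the Hausdorff distance — that every point of the rescaled path is near the segment (handled as above) and that every point of the segment is near some rescaled walk point (which follows since $\bS(\lfloor nt\rfloor)/n$ is within $\varepsilon + \|\mu\|/n + (\text{early correction})$ of $t\mu$ for the relevant $t$). Once this lemma is in hand, everything else is a direct appeal to continuity and homogeneity of $V_1$ and $\mathsf p$ plus the elementary evaluation on a segment; no deeper input is required, and in particular the functional CLT \eqref{FCLT} is not needed here since the law of large numbers already pins down the limit deterministically.
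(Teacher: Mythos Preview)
Your proposal is correct and follows essentially the same route as the paper: establish the functional strong law $\{\bS(\lfloor nt\rfloor)/n\}_{t\in[0,1]}\to\{t\mu\}_{t\in[0,1]}$ (the paper cites \cite[Theorem 3.4]{McRedmond_survey} rather than arguing it by hand), pass to Hausdorff convergence of the hulls via Lemma~\ref{H-ctns}, apply continuity and homogeneity of $V_1$ and $\mathsf p$, and evaluate on the segment $[0,\mu]$. The paper is more explicit in the final step, computing $V_1([0,\mu])=\|\mu\|$ via the Steiner formula \eqref{Steiner} and $\mathsf p([0,\mu])=\mu/2$ directly from the integral definition \eqref{MW}, but otherwise the arguments coincide.
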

\begin{proof}
According to \cite[Theorem 3.4]{McRedmond_survey}, the following convergence holds in $ (\mathcal{D}^d_0[0,1],\mathrm{dist}_{\mathcal{J}_1})$, 
$$
\left\{\frac{\bS(\lfloor nt\rfloor)}{n}\right\}_{t\in[0,1]}\xrightarrow[n\nearrow\infty]{\mathbb{P}\text{-a.s.}}\{ t \mu \}_{t\in[0,1]}.
$$
From Lemma \ref{H-ctns} it then follows that in the space $(\mathcal{K}_0^d,\mathrm{dist}_{\mathcal{H}})$,  
$$
\frac{\mathrm{conv}\{\bS(0),\ldots,\bS(n)\}}{n}\xrightarrow[n\nearrow\infty]{\mathbb{P}\text{-a.s.}} [0,\mu],
$$  
where $ [0,\mu]=\{s \mu : s\in[0,1]\}\subset \mathbb{R}^d$. 
Thus, continuous mapping theorem yields
\begin{equation}\label{CONV-DRIFT-1}
\frac{V_1(n)}{n}\xrightarrow[n\nearrow\infty]{\mathbb{P}\text{-a.s.}}V_1( [0,\mu])\qquad\text{and}\qquad \frac{P(n)}{n}\xrightarrow[n\nearrow\infty]{\mathbb{P}\text{-a.s.}} \mathsf{p}( [0,\mu]).
\end{equation}
We claim that $V_1([0,\mu])=\Vert \mu\Vert$. Indeed, we obtain by Steiner formula \eqref{Steiner} that for any $\rho \geq 0$, 
\begin{align*}
\mathrm{Vol}_d\bigl([0,\mu]+\rho B^d\bigr)&=\kappa_d\rho^dV_0\bigl([0,\mu]\bigr)+\kappa_{d-1}\rho^{d-1}V_1([0,\mu]\bigr) + \sum_{m=2}^d \rho^{d-m}\kappa_{d-m}V_m([0,\mu])\\
&=\kappa_d\rho^d+\kappa_{d-1}\rho^{d-1}V_1([0,\mu]\bigr),
\end{align*}
as $V_0\equiv 1$ and one can use the relation between intrinsic volumes and so-called mixed volumes (see \cite[Eq. (5.31)]{Schneider-book}) together with \cite[Theorem 5.1.8]{Schneider-book} to show that $V_m([0,\mu])=0$ for $m\in \{2,\ldots ,d\}$.  On the other hand, $\mathrm{Vol}_d\bigl([0,\mu]+\rho B^d\bigr)=\kappa_d\rho^d+\kappa_{d-1}\rho^{d-1}\lVert\mu\rVert$ and 
 the assertion follows.
 We finally show that $ \mathsf{p}( [0,\mu])=\frac{\mu}{2}$. We have
 \begin{align*}
  \mathsf{p}( [0,\mu]) &= \frac{1}{\kappa_d}\int_{\mathbb{S}^{d-1}}s_{[0,\mu]}(\theta)\theta \sigma (\mathrm{d}\theta) 
  =\frac{1}{\kappa_d}
  \int_{\{\theta \in \mathbb{S}^{d-1}: \langle \theta ,\mu\rangle >0\}}\langle \theta ,\mu \rangle \theta \sigma (\mathrm{d}\theta)\\
  &=\frac{1}{2\kappa_d}\int_{\mathbb{S}^{d-1}}\langle \theta ,\mu \rangle \theta \sigma (\mathrm{d}\theta).
 \end{align*}
 The claim is then a consequence of the fact that
 \begin{align*}
 \int_{\mathbb{S}^{d-1}}\langle \theta ,\mu \rangle \theta \sigma (\mathrm{d}\theta) = \kappa_d \mu
 \end{align*}
and the proof is finished.
\end{proof}

Even though we could apply the same reasoning as in \eqref{CONV-DRIFT-1} for $m \in \{2,\ldots ,d\}$ and obtain 
\begin{align*}
\frac{V_m(n)}{n^m}\xrightarrow[n\nearrow\infty]{\mathbb{P}\text{-a.s.}}V_m( [0,\mu]),
\end{align*} 
this would not provide accurate information as
$V_m( [0,\mu])=0$ for $m \in \{2,\ldots ,d\}$, see \cite[Theorem 5.1.8]{Schneider-book}.
We thus need to find an optimal scaling for the sequence $\{V_m(n)\}_{n\ge0}$ when $m\in \{2,\ldots ,d\}$ with different methods. For this 
we adapt approach developed in \cite{Wade-Xu_SPA} (see also \cite{McRedmond_survey}). We choose a standard basis of $\R^d$ according to the drift vector $\mu$ and then use scaling of linear order in the coordinate along the drift, while for the remaining coordinates we use sequence $\{b_n\}_{n\in\N}$. This results in a time-space L\'{e}vy process in the limit.

Let $\{ e_1,\dots, e_d\}$ be the standard orthonormal basis of $\R^d$
and let $\{\bar e_1,\dots,\bar e_d\}$ be an another orthonormal basis such that $\bar e_1=\mu/\lVert\mu\rVert$. Further, let $\phi:\R^d\to\R^{d}$ and $\phi^\bot:\R^d\to\R^{d-1}$ be two linear mappings given by
\begin{align*}
\phi(x)=(\langle x,\bar e_1\rangle,\dots, \langle x,\bar e_d\rangle)
\quad \mathrm{and}\quad
\phi^\bot(x)=(\langle x,\bar e_2\rangle,\dots, \langle x,\bar e_d\rangle).
\end{align*} 
Set $\bar {\mathbf{Y}}_i=\phi(\mathbf{Y}_i)$ and $\bar {\mathbf{Y}}^{\bot}_i=\phi^{\bot}(\mathbf{Y}_i)$ for  $i\in\N$. Clearly, $\{\bar {\mathbf{Y}}_i\}_{i\in\N}$ and $\{\bar {\mathbf{Y}}^\bot_i\}_{i\in\N}$ are sequences of independent and identically distributed $\R^d$, respectively, $\R^{d-1}$-valued random vectors. Also, $\mathbb{E}[\lVert \bar {\mathbf{Y}}_i\rVert^{\alpha-\varepsilon}]<\infty$ and $\mathbb{E}[\lVert \bar {\mathbf{Y}}^\bot_i\rVert^{\alpha-\varepsilon}]<\infty$ for all $\varepsilon\in(0,\alpha)$ and 
$$
\mathbb{E}\bigl[ \bar{Y}^{(k)}_i\bigr]=\langle \mu,\bar e_k\rangle=\begin{cases}
\lVert \mu\rVert, & k=1,\\
0, & k>1.
\end{cases}
$$ 
Further, let $\bar{\mathbf{S}}(n)=\sum_{i=1}^n\bar{\mathbf{Y}}_i$ and $\bar{\mathbf{S}}^\bot(n)=\sum_{i=1}^n\bar{\mathbf{Y}}^\bot_i$ be the corresponding random walks starting from the origin.
Observe that $\bar{\mathbf{S}}(n)=\phi(\mathbf{S}(n))$, $\bar{\mathbf{S}}^\bot(n)=\phi^\bot(\mathbf{S}(n))$ and $\{\bar{\mathbf{S}}^\bot(n)\}_{n\ge0}$ is a zero-drift random walk.
Assumption \eqref{DOM-ATT} and continuous mapping theorem yield
$$
\frac{\bar\bS(n)-n\lVert\mu\rVert e_1 }{b_n}=\phi\left(\frac{\mathbf{S}(n)-n\mu}{b_n}\right)\xrightarrow[n\nearrow\infty]{\mathcal{D}} \phi\bigl(\bX(1)\bigr)
$$ 
and  
\begin{equation*}
\frac{\bar\bS^\bot(n)}{b_n}=\phi^\bot\left(\frac{\mathbf{S}(n)-n\mu}{b_n}\right)\xrightarrow[n\nearrow\infty]{\mathcal{D}} \phi^\bot\bigl(\bX(1)\bigr).
\end{equation*}
The process $\bar{\bX}(t)=\phi(\bX(t))$, $t\ge0$, is necessary a $d$-dimensional $\alpha$-stable L\'evy process. From \cite[Theorem 2.1.5]{Samorodnitsky} it then follows that $\bar{\bX}^\bot(t)=\phi^\bot(\bX(t))$, $t\ge0$, is  a $(d-1)$-dimensional $\alpha$-stable L\'evy process.
For $n\in\N$ we define three linear mappings $\psi_n:\R^d\to\R^d$, $\psi_n^1:\R^d\to\R$ and $\psi_n^\bot:\R^d\to\R^{d-1}$ by
$$ \psi_n(x_1,\dots,x_d)=\phi\left(\frac{x_1}{n},\frac{x_2}{b_n},\dots,\frac{x_d}{b_n}\right),\qquad \psi_n^1(x_1,\dots,x_d)=\frac{\langle (x_1,\dots,x_d),\bar e_1\rangle}{n}$$ and
$$\psi_n^\bot(x_1,\dots,x_d)=\phi^\bot\left(x_1,\frac{x_2}{b_n},\dots,\frac{x_d}{b_n}\right).$$ For $A\subseteq\R^d$ we put $\psi_n(A)=\{\psi_n(x):x\in A\}$ and observe that if $A$ is compact/convex, then  
$\psi_n(A)$ is also compact/convex.
Let 
\begin{equation}\label{process tilde X}
\widetilde{\mathbf{X}}(t)=(\lVert\mu\rVert t,\bar{\bX}^\bot(t)),\quad t\geq 0.
\end{equation}
Clearly, $\{\widetilde{\mathbf{X}}(t)\}_{t\ge0}$ is a L\'evy process in $\R^d$. We now prove the following auxiliary result. We follow \cite[Lemma 6.6]{McRedmond_survey}.

\begin{proposition}\label{DRIFT}
Assume \eqref{DOM-ATT}. Let $\alpha >1$ and $\mu\neq 0$.
It holds 
$$
\psi_n\bigl(\mathrm{conv}\bigl(\{\bS(0),\ldots,\bS(n)\}\bigr)\bigr)\xRightarrow[n\nearrow\infty]{} \mathrm{conv}\,\widetilde\bX[0,1]
$$ 
in the sense of weak convergence in  $(\mathcal{K}_0^d,\mathrm{dist}_{\mathcal{H}})$.
	\end{proposition}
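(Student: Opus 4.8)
The plan is to upgrade the two coordinate-block limit theorems recorded just above Proposition~\ref{DRIFT} into a single joint functional limit theorem in $(\mathcal{D}^d_0[0,1],\distJ)$, and then to push this forward to the space of convex bodies through the Lipschitz map $H$ from Lemma~\ref{H-ctns}, exactly as in the proof of Proposition~\ref{FCLT-HULL}. Writing $\psi_n=(\psi^1_n,\psi^\bot_n)$, the rescaled path is
$$
\bigl\{\psi_n(\bS(\lfloor nt\rfloor))\bigr\}_{t\in[0,1]}=\bigl\{\bigl(\tfrac{1}{n}\bar{S}^{(1)}(\lfloor nt\rfloor),\,\tfrac{1}{b_n}\bar{\bS}^\bot(\lfloor nt\rfloor)\bigr)\bigr\}_{t\in[0,1]}.
$$
Since $\{\bS(\lfloor nt\rfloor)/n\}_{t\in[0,1]}\to\{t\mu\}_{t\in[0,1]}$ holds $\Prob$-a.s.\ in $(\mathcal{D}^d_0[0,1],\distJ)$ by \cite[Theorem~3.4]{McRedmond_survey} and the limit is continuous, the convergence is in fact uniform; projecting onto $\bar e_1$ and using $\langle\mu,\bar e_1\rangle=\lVert\mu\rVert$ gives that $\{\tfrac{1}{n}\bar S^{(1)}(\lfloor nt\rfloor)\}_{t\in[0,1]}\to\{\lVert\mu\rVert t\}_{t\in[0,1]}$ $\Prob$-a.s.\ uniformly on $[0,1]$. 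On the other hand $\{\bar{\bS}^\bot(n)\}_{n\ge0}$ is a zero-drift random walk (as observed above, $\mathbb{E}[\bar{\bY}^\bot_1]=\phi^\bot(\mu)=0$) whose step lies in the domain of attraction of the $(d-1)$-dimensional $\alpha$-stable law $\bar{\bX}^\bot(1)=\phi^\bot(\bX(1))$ (with vanishing centering sequence, as $\alpha>1$); hence, by the equivalence of \eqref{DOM-ATT} and \eqref{FCLT} applied to $\{\bar{\bS}^\bot(n)\}_{n\ge0}$,
$$
\bigl\{\tfrac{1}{b_n}\bar{\bS}^\bot(\lfloor nt\rfloor)\bigr\}_{t\in[0,1]}\xRightarrow[n\nearrow\infty]{}\{\bar{\bX}^\bot(t)\}_{t\in[0,1]}\qquad\text{in }(\mathcal{D}^{d-1}_0[0,1],\distJ).
$$

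Next I would combine these two limits into the joint convergence
$$
\bigl\{\psi_n(\bS(\lfloor nt\rfloor))\bigr\}_{t\in[0,1]}\xRightarrow[n\nearrow\infty]{}\bigl\{(\lVert\mu\rVert t,\bar{\bX}^\bot(t))\bigr\}_{t\in[0,1]}=\{\widetilde{\bX}(t)\}_{t\in[0,1]}\qquad\text{in }(\mathcal{D}^{d}_0[0,1],\distJ).
$$
This is the one genuinely delicate step, since marginal $\mathcal{J}_1$-convergence of two coordinate blocks need not imply joint $\mathcal{J}_1$-convergence. It goes through here because the first block converges in probability to a \emph{deterministic, continuous} path, and convergence in $\mathcal{J}_1$ to a continuous limit is equivalent to uniform convergence; therefore the time changes realizing the $\mathcal{J}_1$-convergence of the orthogonal block may be used for both blocks at once without disturbing the first one, and joint $\mathcal{J}_1$-convergence follows. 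Equivalently: marginal tightness makes the pair tight in $(\mathcal{D}^d_0[0,1],\distJ)$, while along any subsequential weak limit the first block is $\Prob$-a.s.\ the fixed path $t\mapsto\lVert\mu\rVert t$, so the joint law is forced to be $\delta_{\{\lVert\mu\rVert t\}_{t}}\otimes\mathcal{L}(\bar{\bX}^\bot)$ and the whole sequence converges to $\widetilde{\bX}$. This is the device used for planar walks in \cite[Lemma~6.6]{McRedmond_survey}.

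Finally I would transfer to the level of convex bodies. Since $\psi_n$ is linear and the path $t\mapsto\psi_n(\bS(\lfloor nt\rfloor))$ takes only the finitely many values $\psi_n(\bS(0)),\dots,\psi_n(\bS(n))$, the commutation of $\mathrm{conv}$ and of closure with linear maps used in the proof of Proposition~\ref{FCLT-HULL} gives
$$
H\bigl(\{\psi_n(\bS(\lfloor nt\rfloor))\}_{t\in[0,1]}\bigr)=\mathrm{conv}\{\psi_n(\bS(0)),\dots,\psi_n(\bS(n))\}=\psi_n\bigl(\mathrm{conv}\,\SRange\bigr),
$$
and, $H$ being Lipschitz by Lemma~\ref{H-ctns}, the continuous mapping theorem applied to the joint convergence above yields
$$
\psi_n\bigl(\mathrm{conv}\,\SRange\bigr)\xRightarrow[n\nearrow\infty]{}H\bigl(\{\widetilde{\bX}(t)\}_{t\in[0,1]}\bigr)=\mathrm{conv}\,\widetilde{\bX}[0,1]\qquad\text{in }(\mathcal{K}_0^d,\mathrm{dist}_\mathcal{H}),
$$
both sides lying in $\mathcal{K}^d_0$ since $\bS(0)=0$ and $\widetilde{\bX}(0)=0$. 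As flagged, the only non-routine ingredient is the joint $\mathcal{J}_1$-convergence of the middle step; everything else is bookkeeping with linear maps together with the continuity statements already at hand.
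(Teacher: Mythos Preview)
Your proposal is correct and follows essentially the same route as the paper: establish the functional limit $\{\psi_n(\bS(\lfloor nt\rfloor))\}_{t\in[0,1]}\Rightarrow\{\widetilde{\bX}(t)\}_{t\in[0,1]}$ in $\mathcal{J}_1^d$ by a Slutsky-type argument (the $\bar e_1$-block converging $\Prob$-a.s.\ uniformly to the deterministic continuous path $t\mapsto\lVert\mu\rVert t$, the orthogonal block converging weakly to $\bar{\bX}^\bot$), and then push forward through the continuous map $H$. The paper makes the Slutsky step explicit by introducing the auxiliary process $A_n=\{(\lVert\mu\rVert t,\psi_n^\bot(\bS(\lfloor nt\rfloor)))\}_{t\in[0,1]}$ and bounding $\distJ(A_n,B_n)$ by $\sup_t|\psi_n^1(\bS(\lfloor nt\rfloor))-\lVert\mu\rVert t|$, but this is exactly the device you describe.
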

\begin{proof}
Due to linearity, $\psi_n(\mathrm{conv}(\{\bS(0),\ldots,\bS(n)\})=\mathrm{conv}(\psi_n(\{\bS(0),\ldots,\bS(n)\}))$. Let $\mathcal{C}^d$ be the collection of all compact sets in $\R^d$ and $\mathcal{C}^d_0=\{C\in \mathcal{C}^d:0\in C\}$. Since the mapping $\mathcal{C}^d \ni A\mapsto \mathrm{conv} A \in \mathcal{K}^d$ is continuous, it suffices to show that 
	$$
	\psi_n\bigl(\{\bS(0),\ldots,\bS(n)\}\bigr)\xRightarrow[n\nearrow\infty]{} \widetilde\bX[0,1]
	$$ 
	in the sense of weak convergence in  $(\mathcal{C}_0^d,\mathrm{dist}_{\mathcal{H}})$. This will follow if we prove that 
	\begin{equation}\label{FCLT-AUX}
	\left\{\psi_n\bigl(\bS(\lfloor nt\rfloor)\bigr)\right\}_{t\in[0,1]}\xrightarrow[n\nearrow\infty]{\mathcal{J}^{d}_1}\{\widetilde\bX(t)\}_{t\in[0,1]}.
	\end{equation}
	Indeed, since the mapping $f\mapsto f[0,1]$  is continuous from  $(\mathcal{D}^d_0[0,1],\mathrm{dist}_{\mathcal{J}_1})$ to $(\mathcal{C}_0^d,\mathrm{dist}_{\mathcal{H}})$ (see the proof of Lemma \ref{H-ctns}), if \eqref{FCLT-AUX} holds then continuous mapping theorem implies  
	$$
	\psi_n\bigl(\{\bS(0),\ldots,\bS(n)\}\bigr)=
	\cl \bigl\{\psi_n\bigl(\bS(\lfloor nt\rfloor)\bigr):t\in[0,1]\bigr\}\xRightarrow[n\nearrow\infty]{} \widetilde\bX[0,1],
	$$ 
	which proves the assertion.
		
To show \eqref{FCLT-AUX} we set $A_n=\{(\|\mu\|t,\psi_n^\bot(\bS(\lfloor nt\rfloor)))\}_{t\in[0,1]}$ and $B_n=\{\psi_n(\bS(\lfloor nt\rfloor))\}_{t\in[0,1]}$.  According to \cite[Theorem 3.1]{Billingsley-book},  \eqref{FCLT-AUX} will follow if we show that 
\begin{equation}\label{SLU}
	\mathrm{dist}_{\mathcal{J}_1}(A_n,B_n)\xrightarrow[n\nearrow\infty]{\mathbb{P}\text{-a.s.}}0 \qquad \text{and}\qquad A_n\xrightarrow[n\nearrow\infty]{\mathcal{J}^{d}_1}\{\widetilde\bX(t)\}_{t\in[0,1]}.
\end{equation}  
	To prove the first relation in \eqref{SLU} we proceed as follows. We have  
	\begin{align*}
	\mathrm{dist}_{\mathcal{J}_1}(A_n,B_n)&=\mathrm{dist}_{\mathcal{J}_1}\left(\bigl\{\bigl(\|\mu\|t,\psi_n^\bot\bigl(\bS(\lfloor nt\rfloor)\bigr)\bigr)\bigr\}_{t\in[0,1]},\bigl\{\bigr(\psi_n^1\bigr(\bS(\lfloor nt\rfloor)\bigr),\psi_n^\bot\bigl(\bS(\lfloor nt\rfloor)\bigr)\bigr)\bigr\}_{t\in[0,1]}\right)\\
	&\le\sup_{t\in[0,1]}\left|\psi_n^1\bigl(\bS(\lfloor nt\rfloor)\bigr)-\|\mu\|t\right|.
	\end{align*}
Observe  that $\psi_n^1(\bS(\lfloor nt\rfloor))=\bar{S}^{1}(\lfloor nt\rfloor)/n$, where $\{\bar{S}^{1}(n)\}_{n\ge0}$ is a one-dimensional random walk with drift $\lVert\mu\rVert$. Hence, according to \cite[Theorem 3.4]{McRedmond_survey},
$$
\sup_{t\in[0,1]}\left|\psi_n^1\bigl(\bS(\lfloor nt\rfloor)\bigr)-\|\mu\|t\right|\xrightarrow[n\nearrow\infty]{\mathbb{P}\text{-a.s.}}0.
$$
We next prove the second relation in  \eqref{SLU}. Note that $\psi_n^\bot(\bS(n))=\bar{\mathbf{S}}^\bot(n)/b_n$. By \eqref{DOM-ATT} and continuous mapping theorem we obtain (cf. \eqref{FCLT})
$$
\left\{\psi_n^\bot\bigl(\bS(\lfloor nt\rfloor)\bigr)\right\}_{t\in[0,1]}=\left\{\frac{\bar{\bS}^\bot(\lfloor nt\rfloor)}{b_n}\right\}_{t\in [0,1]}\xrightarrow[n\nearrow\infty]{\mathcal{J}^{d-1}_1}\{\bar{\bX}^\bot(t)\}_{t\in[0,1]}.
$$ 
This means that for any bounded and continuous $f:(\mathcal{D}^{d-1}_0[0,1],\mathrm{dist}_{\mathcal{J}_1})\to (\R,|\cdot|)$ it holds that 
$$
\lim_{n\to\infty}\mathbb{E}\left[f\left(\left\{\psi_n^\bot\bigl(\bS(\lfloor nt\rfloor)\bigr)\right\}_{t\in[0,1]}\right)\right]=\mathbb{E}\left[f\left(\{\bar{\bX}^\bot(t)\}_{t\in[0,1]}\right)\right].
$$ 
For a given $g:(\mathcal{D}^{d}_0[0,1],\mathrm{dist}_{\mathcal{J}_1})\to (\R,|\cdot|)$ which is continuous and bounded we define $$f(\cdot)=g\left(\bigl\{\lVert\mu \rVert t,\cdot\bigr\}_{t\in[0,1]}\right).$$
	Clearly, $f:(\mathcal{D}^{d-1}_0[0,1],\mathrm{dist}_{\mathcal{J}_1})\to (\R,|\cdot|)$ is also continuous and bounded. Hence, 
	\begin{align*}
	\lim_{n\to\infty}\mathbb{E}\left[g\left(\bigl\{\bigl(\|\mu\|t,\psi_n^\bot\bigl(\bS(\lfloor nt\rfloor)\bigr)\bigr)\bigr\}_{t\in[0,1]}\right)\right]
	&=\lim_{n\to\infty}\mathbb{E}\left[f\left(\left\{\psi_n^\bot\bigl(\bS(\lfloor nt\rfloor)\bigr)\right\}_{t\in[0,1]}\right)\right]\\
	&=\mathbb{E}\left[f\left(\{\bar{\bX}^\bot(t)\}_{t\in[0,1]}\right)\right]\\
	&=\mathbb{E}\left[g\left(\bigl\{\bigl(\lVert\mu\rVert t,\bar{\bX}^\bot(t)\bigr)\bigr\}_{t\in[0,1]}\right)\right]\\
	&=\mathbb{E}\left[g\left(\bigl\{\widetilde{\bX}(t)\bigr\}_{t\in[0,1]}\right)\right],
	\end{align*} which completes the proof.
	\end{proof}

Let $T=(\langle \bar e_k, e_l\rangle)_{k,l=1}^d$ and $D_n=\mathrm{diag}(1/n,1/b_n\dots,1/b_n)$, and observe that $T$ is orthogonal,  $\phi(x)=Tx$ and $\psi_n(x)=TD_nx$. The following result provides information on the convergence of the $m$-th, $m\in \{2,\ldots ,d\}$, intrinsic volume of $\mathrm{conv}\{\bS(0),\ldots,\bS(n)\}$ and it should be compared with \cite[Theorem 6.13]{McRedmond_survey} and \cite[Corollary 2.8]{Wade-Xu_SPA}.

\begin{theorem}\label{CONV-DRIFT-2}
Assume \eqref{DOM-ATT}. Let $\alpha >1$ and $\mu\neq 0$.
	For $m\in \{1,\ldots ,d\}$ it holds 
$$
V_m\left(D_n\bigl(\mathrm{conv}\{\bS(0),\ldots,\bS(n)\}\bigr)\right)\xrightarrow[n\nearrow\infty]{\mathcal{D}} V_m\bigl(\mathrm{conv}\,\widetilde\bX[0,1]\bigr).
$$ 
In particular, 
$$
\frac{V_d(n)}{nb_n^{d-1}}\xrightarrow[n\nearrow\infty]{\mathcal{D}} V_d\bigl(\mathrm{conv}\,\widetilde\bX[0,1]\bigr).
$$ 
	\end{theorem}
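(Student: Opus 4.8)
The plan is to reduce the statement to Proposition \ref{DRIFT} via the continuous mapping theorem, exploiting the rigid-motion invariance of intrinsic volumes. The key point is that $\psi_n = T\circ D_n$ with $T$ orthogonal, so that although $D_n$ is not a rigid motion, the extra orthogonal factor $T$ is harmless for $V_m$.

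First I would recall that the intrinsic volumes $V_0,\ldots,V_d$ are invariant under orthogonal transformations of $\R^d$; this follows directly from the Steiner formula \eqref{Steiner}, since $\mathrm{Vol}_d$ and the ball $B^d$ are unchanged when $A$ is replaced by $TA$ for orthogonal $T$ (see also \cite{Schneider-book}). Consequently, for every convex body $K\subset\R^d$,
$$
V_m\bigl(\psi_n(K)\bigr)=V_m\bigl(T D_n(K)\bigr)=V_m\bigl(D_n(K)\bigr),\qquad m\in\{0,\ldots,d\}.
$$
Taking $K=\mathrm{conv}\{\bS(0),\ldots,\bS(n)\}$, it therefore suffices to establish the convergence in distribution of $V_m\bigl(\psi_n(\mathrm{conv}\{\bS(0),\ldots,\bS(n)\})\bigr)$ towards $V_m(\mathrm{conv}\,\widetilde\bX[0,1])$.

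Next I would invoke Proposition \ref{DRIFT}, which gives
$$
\psi_n\bigl(\mathrm{conv}\{\bS(0),\ldots,\bS(n)\}\bigr)\xRightarrow[n\nearrow\infty]{}\mathrm{conv}\,\widetilde\bX[0,1]
$$
weakly in $(\mathcal{K}_0^d,\mathrm{dist}_\mathcal{H})$, together with the fact recalled in the preliminaries that $A\mapsto V_m(A)$ is a continuous map from $(\mathcal{K}^d,\mathrm{dist}_\mathcal{H})$ to $([0,\infty),|\cdot|)$ (see \cite[Theorem III.1.1]{Schneider}). The continuous mapping theorem then immediately yields the first displayed convergence for every $m\in\{1,\ldots,d\}$. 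Finally, for the ``in particular'' statement I would specialize to $m=d$, where $V_d=\mathrm{Vol}_d$ is genuinely linear-covariant: for any linear map $M$ and convex body $K$ one has $V_d(MK)=\lvert\det M\rvert\,V_d(K)$. Since $D_n=\mathrm{diag}(1/n,1/b_n,\ldots,1/b_n)$ has $\det D_n=1/(n b_n^{d-1})>0$, this gives $V_d\bigl(D_n(\mathrm{conv}\{\bS(0),\ldots,\bS(n)\})\bigr)=V_d(n)/(n b_n^{d-1})$, and the claim follows from the case $m=d$ just proved.

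There is essentially no obstacle beyond Proposition \ref{DRIFT} itself; the only point requiring a little care is that for $m<d$ one cannot factor the scaling matrix $D_n$ out of $V_m$ as a scalar (intrinsic volumes are not covariant under non-homothetic linear maps), so the argument must run through the orthogonal invariance of $V_m$ and the weak convergence of the \emph{full} rescaled convex hull rather than through a naive homogeneity scaling.
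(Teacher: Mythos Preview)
Your proof is correct and follows essentially the same route as the paper: apply Proposition~\ref{DRIFT}, use the continuous mapping theorem together with the continuity of $V_m$ on $(\mathcal{K}^d,\mathrm{dist}_{\mathcal{H}})$, and then invoke the orthogonal invariance of intrinsic volumes to pass from $\psi_n=TD_n$ to $D_n$. The only cosmetic difference is the order of presentation (you establish $V_m(\psi_n K)=V_m(D_n K)$ first, the paper applies the continuous mapping theorem first), and your closing remark about why homogeneity alone does not suffice for $m<d$ is a useful clarification but not a departure from the paper's argument.
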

\begin{proof}
By Proposition \ref{DRIFT} and continuous mapping theorem it follows that 
	$$
	V_m\left(\psi_n\bigl(\mathrm{conv}\{\bS(0),\ldots,\bS(n)\})\bigr)\right)\xrightarrow[n\nearrow\infty]{\mathcal{D}} V_m\bigl(\mathrm{conv}\,\widetilde\bX[0,1]\bigr).
	$$ 
Since
	$$
	V_m\left(\psi_n\bigl(\mathrm{conv}\{\bS(0),\ldots,\bS(n)\}\bigr)\right)=V_m\left(TD_n\bigl(\mathrm{conv}\{\bS(0),\ldots,\bS(n)\}\bigr)\right)
	$$ 
and intrinsic volumes are invariant under orthogonal transformations (see \cite[Chapter III]{Schneider}), we infer the first assertion. 
The second formula follows from the fact that 
$$
V_d\left(D_n\bigl(\mathrm{conv}\{\bS(0),\ldots,\bS(n)\}\bigr)\right)=\det(D_n)V_d(n)=\frac{V_d(n)}{nb_n^{d-1}}
$$
and the proof is finished.
\end{proof}

\begin{remark}\label{REM}
	 If we assume that $\bbE[\lVert \bY_1\rVert^2]<\infty$ and define $\Sigma_{\bot}=\bbE[(\bar{\bY}^\bot_1)(\bar{\bY}^\bot_1)^t]$, then $b_n=\sqrt{n}$ (see \cite[Theorem 2.6.6]{Ibragimov-Linnik}) and  
	$$
	\{\bar{\bX}^\bot(t)\}_{t\ge0}\stackrel{\mathcal{D}}{=}\{\Sigma_\bot^{1/2}\mathbf{B}(t)\}_{t\ge0},
	$$ 
	where $\{\mathbf{B}(t)\}_{t\ge0}$ is a standard $(d-1)$-dimensional Brownian motion. Further, let $$\Sigma_{\mu}=\begin{pmatrix}
	\lVert\mu\rVert^2 & 0 \\
	0 & \Sigma_{\bot}
	\end{pmatrix}.$$ Then, 
	$$\bigl\{\widetilde{\bX}(t)\bigr\}_{t\ge0}\stackrel{\mathcal{D}}{=}\big\{\Sigma_{\mu}^{1/2}\widetilde{\mathbf{B}}(t)\big\}_{t\ge0}
	$$
	and 
	$$ 
	V_d\bigl(\mathrm{conv}\, \widetilde\bX[0,1]\bigr)
	=
	\sqrt{\det (\Sigma_{\mu})}\, V_d\bigl(\mathrm{conv}\, \widetilde{\mathbf{B}}[0,1]\bigr)
	=
	\lVert\mu\rVert\sqrt{\det (\Sigma_{\bot})}\, V_d\bigl(\mathrm{conv}\, \widetilde{\mathbf{B}}[0,1]\bigr),
	$$ where $\widetilde{\mathbf{B}}(t)=(t,\mathbf{B}(t)),$ $t\ge0$.
		In view of Theorem \ref{CONV-DRIFT-2} we obtain
	$$
	V_m\left(\psi_n\bigl(\mathrm{conv}\{\bS(0),\ldots,\bS(n)\})\bigr)\right)\xrightarrow[n\nearrow\infty]{\mathcal{D}} 
	V_m \bigl(\Sigma_\mu^{1/2}\mathrm{conv}\, \tilde{\mathbf{B}}[0,1] \bigr)
	$$ and 
	$$V_d\left(\psi_n\bigl(\mathrm{conv}\{\bS(0),\ldots,\bS(n)\})\bigr)\right)\xrightarrow[n\nearrow\infty]{\mathcal{D}}  
		\lVert\mu\rVert\sqrt{\det (\Sigma_{\bot})}\, V_d\bigl(\mathrm{conv}\, \widetilde{\mathbf{B}}[0,1]\bigr).$$
\end{remark}

\section{Convergence of means}\label{sec:Means}
In this section, we study convergence of expected intrinsic volumes of $\conv \SRange$.
We assume that $\alpha>1$ which implies
 $\mathbb{E}[\lVert\bY_1\rVert^{\alpha-\varepsilon}]<\infty$ for every $0<\varepsilon<\alpha.$ 
As before, we shall distinguish between two cases: $\mu = \mathbb{E}[\bY_1]=0$, or $\mu \neq 0$. 

\subsection{Zero-drift case}
In this paragraph, we assume that $\mu=0$. The following result concerns the sequences $\{V_1(n)\}_{n\ge0}$ and $\{P(n)\}_{n\ge0}$. It should be compared with \cite[Proposition 3.1]{Wade-Xu_SPA}, where the expected perimeter of the convex hull of planar random walks was studied. 
\begin{theorem}\label{PER}
Assume \eqref{DOM-ATT}. Let $\alpha >1$ and $\mu =0$.
It holds
\begin{equation}\label{Exp_V_1_Limit}
\lim_{n\to \infty}\frac{\bbE [V_1(n)]}{b_n}
=
\bbE \bigl[V_1 \bigl(\mathrm{conv}\, \bX[0,1] \bigr)\bigr]
\end{equation}
and
\begin{equation}\label{P_1}
\lim_{n\to \infty}\frac{\bbE \bigl[\|P(n)\|\bigr]}{b_n}
=
	\bbE \bigl[\|\mathsf{p} \bigl(\mathrm{conv}\, \bX[0,1] \bigr)\|\bigr].
\end{equation}
\end{theorem}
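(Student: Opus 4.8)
The plan is to combine the distributional convergence from Proposition~\ref{FCLT-HULL} (which gives $V_1(n)/b_n \xrightarrow{\mathcal{D}} V_1(\mathrm{conv}\,\bX[0,1])$ and $P(n)/b_n \xrightarrow{\mathcal{D}} \mathsf{p}(\mathrm{conv}\,\bX[0,1])$, recalling $\mathbf{a}_n\equiv 0$ in the zero-drift case) with a uniform integrability argument, since convergence in distribution plus uniform integrability yields convergence of the first moments. So the real content is: the families $\{V_1(n)/b_n\}_{n\ge 1}$ and $\{\|P(n)\|/b_n\}_{n\ge 1}$ are uniformly integrable. For this I would control both quantities by the diameter (or circumradius) of the convex hull, which is in turn controlled by $\max_{0\le k\le n}\|\bS(k)\|$.

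First I would record the geometric comparison: for any $A\in\mathcal{K}^d$ one has $V_1(A)=\frac{d\kappa_d}{2\kappa_{d-1}}\mathsf{w}(A)$ by \eqref{V_1=mean_width}, and the mean width is bounded by a dimensional constant times the diameter of $A$; similarly $\|\mathsf{p}(A)\|\le \frac{1}{\kappa_d}\int_{\mathbb{S}^{d-1}}|s_A(\theta)|\,\sigma(\mathrm{d}\theta)$ is bounded by a dimensional constant times $\sup_{\theta}|s_A(\theta)|$, which for $A=\mathrm{conv}\{\bS(0),\ldots,\bS(n)\}$ equals $\max_{0\le k\le n}\max_\theta\langle\theta,\bS(k)\rangle \le \max_{0\le k\le n}\|\bS(k)\|$. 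Hence there is a constant $c=c(d)$ with
\begin{equation*}
\frac{V_1(n)}{b_n} + \frac{\|P(n)\|}{b_n} \le \frac{c}{b_n}\max_{0\le k\le n}\|\bS(k)\|.
\end{equation*}
So it suffices to prove that $\{b_n^{-1}\max_{0\le k\le n}\|\bS(k)\|\}_{n\ge 1}$ is uniformly integrable, equivalently (since it converges in distribution to $\sup_{t\in[0,1]}\|\bX(t)\|$ by the continuous mapping theorem applied to \eqref{FCLT}) that this family is bounded in $L^p$ for some $p>1$.

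The $L^p$-boundedness for $p\in(1,\alpha)$ is the main obstacle, and I would handle it as follows. Since $\mu=0$ and $\alpha>1$, we have $\mathbb{E}[\|\bY_1\|^{p}]<\infty$ for every $p\in(1,\alpha)$, and $\{\bS(k)\}_{k\ge0}$ is a mean-zero martingale. By Doob's maximal inequality in $L^p$, $\mathbb{E}[\max_{0\le k\le n}\|\bS(k)\|^p]\le (p/(p-1))^p\,\mathbb{E}[\|\bS(n)\|^p]$. It then remains to show $\mathbb{E}[\|\bS(n)\|^p] = O(b_n^p)$; this is a Marcinkiewicz--Zygmund / von Bahr--Esseen type estimate. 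Concretely, fixing $p\in(1,\min\{\alpha,2\})$, the von Bahr--Esseen inequality gives $\mathbb{E}[\|\bS(n)\|^p]\le 2\,n\,\mathbb{E}[\|\bY_1\|^p]$, so $b_n^{-p}\mathbb{E}[\|\bS(n)\|^p]\le 2\,n\,b_n^{-p}\,\mathbb{E}[\|\bY_1\|^p]$; since $b_n=n^{1/\alpha}\ell(n)$ with $\ell$ slowly varying and $p<\alpha$, we get $n\,b_n^{-p}=n^{1-p/\alpha}\ell(n)^{-p}\to 0$, hence the bound is even $o(1)$, a fortiori $O(1)$. (If $\alpha=2$ one takes any $p\in(1,2)$ and uses $\mathbb{E}[\|\bY_1\|^p]<\infty$, which holds since $\mathbb{E}[\|\bY_1\|^2]<\infty$ in that case.) This establishes $L^p$-boundedness, hence uniform integrability of $\{b_n^{-1}\max_{0\le k\le n}\|\bS(k)\|\}_n$, hence of $\{V_1(n)/b_n\}_n$ and $\{\|P(n)\|/b_n\}_n$. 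Combining uniform integrability with the distributional convergence from Proposition~\ref{FCLT-HULL} (and continuity of $A\mapsto V_1(A)$, $A\mapsto\|\mathsf{p}(A)\|$) yields \eqref{Exp_V_1_Limit} and \eqref{P_1}. One should also note the limits are finite, which follows from the same bound: $\mathbb{E}[V_1(\mathrm{conv}\,\bX[0,1])]+\mathbb{E}[\|\mathsf{p}(\mathrm{conv}\,\bX[0,1])\|]\le c\,\mathbb{E}[\sup_{t\in[0,1]}\|\bX(t)\|]<\infty$, the last moment being finite for $p<\alpha$ by the same martingale/maximal reasoning applied to the Lévy process (or by Fatou from the discrete bound).
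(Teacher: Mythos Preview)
Your overall architecture matches the paper's proof exactly: bound $V_1(n)$ and $\|P(n)\|$ by a dimensional constant times $\max_{0\le k\le n}\|\bS(k)\|$, apply Doob's $L^p$ maximal inequality for some $p\in(1,\alpha)$, and reduce everything to $\sup_n \mathbb{E}[\|\bS(n)/b_n\|^p]<\infty$. The gap is in how you verify this last bound.

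The von Bahr--Esseen inequality gives $\mathbb{E}[\|\bS(n)\|^p]\le 2n\,\mathbb{E}[\|\bY_1\|^p]$, hence
\[
\frac{\mathbb{E}[\|\bS(n)\|^p]}{b_n^p}\le \frac{2n}{b_n^p}\,\mathbb{E}[\|\bY_1\|^p]
= 2\,n^{1-p/\alpha}\ell(n)^{-p}\,\mathbb{E}[\|\bY_1\|^p].
\]
You claim this tends to $0$, but since $p<\alpha$ the exponent $1-p/\alpha$ is \emph{positive}, so the right-hand side tends to $\infty$, not $0$. The von Bahr--Esseen bound is simply too coarse here: it ignores the cancellations that make $\bS(n)$ grow like $b_n=n^{1/\alpha}\ell(n)$ rather than like $n$. (The same sign error recurs in your $\alpha=2$ remark; note also that $\alpha=2$ in \eqref{DOM-ATT} does not force $\mathbb{E}[\|\bY_1\|^2]<\infty$.)

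What is actually needed is a moment-convergence result tailored to the domain of attraction: if $S^{(k)}(n)/b_n\Rightarrow X^{(k)}(1)$ with $X^{(k)}(1)$ $\alpha$-stable, then for every $0<p<\alpha$ one has $\sup_n \mathbb{E}[|S^{(k)}(n)/b_n|^p]<\infty$ (in fact the moments converge). The paper invokes \cite[Lemma 5.2.2]{Ibragimov-Linnik} for exactly this, applied coordinatewise after the trivial bound $\mathbb{E}[\|\bS(n)/b_n\|^p]\le\sum_{k=1}^d \mathbb{E}[|S^{(k)}(n)/b_n|^p]$. Once you replace the von Bahr--Esseen step by this, your argument goes through.
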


\begin{proof}
In view of Proposition \ref{FCLT-HULL}, it suffices to show that the sequences $\{\|P(n)\|/b_n\}_{n\geq 1}$ and $\{V_1(n)/b_n\}_{n\geq 1}$ are uniformly integrable (see \cite[Lemma 3.11]{Kallenberg}). To show \eqref{P_1} we proceed as follows. It is evident from the definition that  $
V_1(n)\leq (d\kappa_d/\kappa_{d-1}) \max_{1\leq k\leq n}\Vert \bS(k)\Vert
$ and $\|P(n)\|\leq (\varpi_d/\kappa_d) \max_{1\leq k\leq n}\Vert \bS(k)\Vert$. We fix $\varepsilon>0$ such that $\alpha -\varepsilon >1$. Doob's maximal inequality yields
\begin{align*}
\bbE \bigl[\bigl(V_1(n)/b_n\bigr)^{\alpha -\varepsilon}\bigr]\leq \left(\frac{d\kappa_d (\alpha -\varepsilon)}{\kappa_{d-1}(\alpha -\varepsilon -1)}\right)^{\alpha -\varepsilon}\bbE \bigl[\Vert S(n)/b_n\Vert ^{\alpha -\varepsilon}\bigr].
\end{align*} and
\begin{align*}
\bbE \bigl[\bigl(\|P(n)\|/b_n\bigr)^{\alpha -\varepsilon}\bigr]\leq \left(\frac{\varpi_d (\alpha -\varepsilon)}{\kappa_d(\alpha -\varepsilon -1)}\right)^{\alpha -\varepsilon}\bbE \bigl[\Vert S(n)/b_n\Vert ^{\alpha -\varepsilon}\bigr].
\end{align*}
Hence it is enough to prove that the sequence $\{\Vert S(n)\Vert /b_n\}_{n\geq 1} $  is uniformly bounded in $\mathrm{L}^{\alpha -\varepsilon}$.  Since we have 
$$\bbE \bigl[\Vert \bS(n)/b_n\Vert ^{\alpha -\varepsilon}\bigr] \leq \sum_{k=1}^d \bbE \bigl[\vert S^{(k)}(n)/b_n \vert ^{\alpha -\varepsilon}\bigr],$$
we only need to show that the moments of order $\alpha-\varepsilon$ of the coordinates are uniformly bounded.
 This  follows from \cite[Lemma 5.2.2]{Ibragimov-Linnik}, since the step distributions of $\{S^{(k)}(n)\}_{n\ge0}$, $k=1,\dots,d$, belong to the domain of attraction of a one-dimensional $\alpha$-stable   law (see \cite[Theorem 2.1.2]{Samorodnitsky}). 
\end{proof}

\begin{remark}
We point out that $\bbE \left[ \left( V_m(\conv \, \bX[0,1])\right)^{p}\right] < \infty$ for all $p\in [0,\alpha)$ and $m\in \{1,\ldots ,d\}$ in view of \cite[Theorem 1.1]{Molchanov-Wespi}. 
\end{remark}
We next present the corresponding result for the remaining mean intrinsic volumes.
Here we pose an extra (the so-called general position) assumption on the walk $\{\bS(n)\}_{n\ge0}$, namely we require that it does not stay in any affine hyperplane of $\R^d$ with probability one. This in turn implies that for any choice of time indices $1\leq j_1<\ldots <j_d$, the random vectors $\bS(j_1),\ldots ,\bS(j_d)$ must be almost surely linearly independent, see \cite{Nielsen-Baxter_Area} and \cite[Proposition 2.5]{Kabluchko-Vys-Zap_GAFA}. Under this condition, evidently, the distributions of all coordinates are  continuous and thus lattice random walks are excluded.
Our result can be viewed as a generalization of \cite[p.\ 325]{Nielsen-Baxter_Area} and \cite[Proposition 3.3]{Wade-Xu_SPA}  which concerned the asymptotic behaviour of the area of the convex hull of planar random walks.
\begin{theorem}\label{Vm} 
Assume \eqref{DOM-ATT}. Let $\alpha >1$ and $\mu =0$.
Suppose that $\Prob(\bY_1\in \mathbb{h})=0$ for any affine hyperplane $\mathbb{h}\subset\R^d$.
Then, for each $m\in\{1,\dots,d\}$,
\begin{align}\label{EXP_Intr_Vol_Limit}
\lim_{n\to \infty}
	\frac{\bbE [V_m(n)]}{b^m_n}
	=
		\frac{\alpha\Gamma(1/\alpha)^m}{m\Gamma(m/\alpha)} \cdot
	\frac{\bbE\left[\sqrt{\det \left(\langle\bX^{(k)}(1),\bX^{(l)}(1)\rangle\right)_{k,l=1}^{m}}\right]}{m!},
\end{align}
where $\{\bX^{(k)}(t)\}_{t\ge0}$, $k=1,\dots,m$, are independent $\alpha$-stable L\'evy processes with the same law as $\{\bX(t)\}_{t\ge0}$.
\end{theorem}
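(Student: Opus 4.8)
The plan is to reduce the statement to the exact combinatorial formula for the expected intrinsic volumes of the convex hull of a random walk in general position established in \cite{Vysotsky}. In terms of $m$ independent copies $\bS^{(1)},\dots,\bS^{(m)}$ of $\{\bS(n)\}_{n\ge0}$ it reads
$$
\mathbb{E}[V_m(n)]=\frac{1}{m!}\sum_{\substack{l_1,\dots,l_m\ge 1\\ l_1+\dots+l_m\le n}}\frac{1}{l_1\cdots l_m}\,\mathbb{E}\!\left[\sqrt{\det\bigl(\langle \bS^{(k)}(l_k),\bS^{(l)}(l_l)\rangle\bigr)_{k,l=1}^m}\right],
$$
which is the multidimensional analogue of the Spitzer--Widom perimeter identity \cite{Spitzer-Widom} and of the expected-area formula of \cite{Nielsen-Baxter_Area} when $d=m=2$ (equivalently, via shear-invariance of the Gram determinant, one may use $\bS(k_1),\dots,\bS(k_m)$ for a single walk at times $1\le k_1<\dots<k_m\le n$, with weight $\prod_{i=1}^m(k_i-k_{i-1})^{-1}$, $k_0:=0$); the general position hypothesis is exactly what makes the Gram determinants almost surely positive on the relevant index set. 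Dividing by $b_n^m=(n^{1/\alpha}\ell(n))^m$ and using that the Gram determinant is homogeneous of degree two in each vector, the statement reduces to identifying the limit of
$$
\frac{1}{m!}\sum_{\substack{l_1,\dots,l_m\ge 1\\ l_1+\dots+l_m\le n}}\frac{1}{l_1\cdots l_m}\,\mathbb{E}\!\left[\sqrt{\det\Bigl(\Bigl\langle\tfrac{\bS^{(k)}(l_k)}{b_n},\tfrac{\bS^{(l)}(l_l)}{b_n}\Bigr\rangle\Bigr)_{k,l=1}^m}\right],
$$
which I would analyse as a Riemann sum over the simplex $\Delta_m=\{x\in\R^m:x_i>0,\ x_1+\dots+x_m<1\}$.

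To pin down the pointwise limit, fix $x\in\Delta_m$ and set $l_i=\lfloor nx_i\rfloor$. Since $b_n=n^{1/\alpha}\ell(n)$ is regularly varying of index $1/\alpha$, one has $b_{l_i}/b_n\to x_i^{1/\alpha}$, so \eqref{COOR-DOM-ATT} (equivalently the functional limit \eqref{FCLT}) gives $\bS^{(k)}(l_k)/b_n\xrightarrow{\mathcal{D}}x_k^{1/\alpha}\bX^{(k)}(1)$, and jointly in $k$ by independence, with $\bX^{(1)},\dots,\bX^{(m)}$ independent copies of $\bX$. Because $(v_1,\dots,v_m)\mapsto\sqrt{\det(\langle v_i,v_j\rangle)_{i,j=1}^m}$ is continuous and homogeneous of degree one in each argument, the continuous mapping theorem yields
$$
\sqrt{\det\Bigl(\Bigl\langle\tfrac{\bS^{(k)}(l_k)}{b_n},\tfrac{\bS^{(l)}(l_l)}{b_n}\Bigr\rangle\Bigr)_{k,l=1}^m}\xrightarrow[n\to\infty]{\mathcal{D}}(x_1\cdots x_m)^{1/\alpha}\,\sqrt{\det\bigl(\langle\bX^{(k)}(1),\bX^{(l)}(1)\rangle\bigr)_{k,l=1}^m}.
$$

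Next I would upgrade this to convergence of expectations and justify interchanging the limit with the double sum by dominated convergence. Hadamard's inequality $\sqrt{\det(\langle v_i,v_j\rangle)_{i,j=1}^m}\le\prod_{i=1}^m\|v_i\|$ together with independence gives
$$
\frac{1}{b_n^m}\,\mathbb{E}\!\left[\sqrt{\det\bigl(\langle \bS^{(k)}(l_k),\bS^{(l)}(l_l)\rangle\bigr)_{k,l=1}^m}\right]\le\prod_{i=1}^m\frac{\mathbb{E}\|\bS(l_i)\|}{b_n}\le C^m\prod_{i=1}^m\frac{b_{l_i}}{b_n},
$$
where the last bound uses $\mathbb{E}\|\bS(l)\|^{\alpha-\varepsilon}\le C'b_l^{\alpha-\varepsilon}$ for some $\varepsilon\in(0,\alpha-1)$ (by \cite[Lemma 5.2.2]{Ibragimov-Linnik} and \cite[Theorem 2.1.2]{Samorodnitsky}, exactly as in the proof of Theorem \ref{PER}) followed by Lyapunov's inequality, which is legitimate since $\alpha-\varepsilon>1$. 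As $b_l/l=l^{1/\alpha-1}\ell(l)$ is regularly varying of index $1/\alpha-1\in(-1,0)$, Karamata's theorem gives $b_n^{-1}\sum_{l=1}^n b_l/l\to\alpha$, so the dominating sum $\frac{1}{m!}\sum_{l_1+\dots+l_m\le n}\prod_i b_{l_i}/(l_ib_n)$ is bounded uniformly in $n$, and its part coming from tuples with $\min_i l_i<\delta n$ is asymptotically at most $\int_{\{x\in\Delta_m:\,\min_i x_i<\delta\}}\prod_i x_i^{1/\alpha-1}\,\mathrm{d}x$, which tends to $0$ as $\delta\downarrow0$ — the integrability near the boundary of $\Delta_m$ being precisely where $\alpha>1$ enters. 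Splitting the sum at level $\delta n$, using uniform convergence of the summand on the bulk $\{l_i\ge\delta n\text{ for all }i\}$ to obtain a Riemann integral, and then letting $n\to\infty$ and $\delta\downarrow0$, one arrives at
$$
\lim_{n\to\infty}\frac{\mathbb{E}[V_m(n)]}{b_n^m}=\frac{1}{m!}\,\mathbb{E}\!\left[\sqrt{\det\bigl(\langle\bX^{(k)}(1),\bX^{(l)}(1)\rangle\bigr)_{k,l=1}^m}\right]\int_{\Delta_m}\prod_{i=1}^m x_i^{1/\alpha-1}\,\mathrm{d}x;
$$
evaluating the Dirichlet integral, $\int_{\Delta_m}\prod_i x_i^{1/\alpha-1}\,\mathrm{d}x=\Gamma(1/\alpha)^m/\Gamma(1+m/\alpha)=\alpha\Gamma(1/\alpha)^m/(m\Gamma(m/\alpha))$, gives \eqref{EXP_Intr_Vol_Limit}.

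The hard part will be this last domination/Riemann-sum step, and within it the boundary behaviour of the simplex: the weight $\prod_i x_i^{1/\alpha-1}$ is only marginally integrable (borderline at $\alpha=1$), so one must rule out extra mass from tuples with some $l_i$ small or with $l_1+\dots+l_m$ close to $n$, which is exactly where the uniform moment bound $\mathbb{E}\|\bS(l)\|\lesssim b_l$ and the hypothesis $\alpha>1$ come in. A secondary subtlety is that for $m\ge2$ one may have $\alpha-\varepsilon<m$, so $\sqrt{\det}$ cannot be controlled in a single $L^{(\alpha-\varepsilon)/m}$; instead the available $L^{\alpha-\varepsilon}$ estimate must be spread over the $m$ factors via Hadamard's inequality and independence, which is possible precisely because $\bS^{(1)},\dots,\bS^{(m)}$ in the formula of \cite{Vysotsky} are independent.
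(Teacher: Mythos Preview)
Your proof is correct and follows the same overall strategy as the paper's: both start from the exact formula of \cite{Vysotsky}, use Hadamard's inequality together with the $L^{\alpha-\varepsilon}$ bound from \cite[Lemma 5.2.2]{Ibragimov-Linnik} to dominate the Gram term by $\prod_i b_{l_i}$, and then split the index set into a main part and an error/boundary part. The one genuine difference lies in how the leading constant is extracted. You pass to a Riemann sum over the simplex $\Delta_m$ and recognise $\int_{\Delta_m}\prod_i x_i^{1/\alpha-1}\,\mathrm{d}x=\Gamma(1/\alpha)^m/\Gamma(1+m/\alpha)$ directly as a Dirichlet integral; the paper instead writes the leading sum as an $m$-fold convolution $b_n^{-m}\sum_{k\le n} a_k^{*m}$ with $a_k=b_k/k$, and identifies its limit by Stolz--Ces\`aro plus induction, quoting dedicated regularly-varying-sequence lemmas (\cite{Nagaev}, \cite{Omey}) for the convolution asymptotics. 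Your route makes the Beta/Gamma structure of the constant more transparent; the paper's route absorbs the slowly varying factor $\ell$ more cleanly, without having to control $\ell(l)/\ell(n)$ pointwise. In that vein, your boundary claim that the part with $\min_i l_i<\delta n$ is ``asymptotically at most $\int_{\{\min_i x_i<\delta\}}\prod_i x_i^{1/\alpha-1}\,\mathrm{d}x$'' is not literally correct as stated: you need Potter bounds to get $b_l/(l\,b_n)\le C\,(l/n)^{1/\alpha-1-\epsilon}\,n^{-1}$ for $l\ge L$, together with a separate disposal of the finitely many $l<L$ (whose contribution is $O(b_n^{-1})$), before the bound by a slightly perturbed Dirichlet integral goes through. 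This is routine, but you should say it.
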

\begin{proof} 
%
%
According to \cite[Corollary 3]{Vysotsky} we have
\begin{align}\label{Vysot}
\bbE[V_m(n)]=\frac{1}{m!}\sum_{\substack{j_1+\cdots+j_m\le n \\ j_1,\dots,j_m\in\N}}\frac{\bbE\left[\sqrt{\det \left(\langle\bS^{(k)}(j_k),\bS^{(l)}(j_l)\rangle\right)_{k,l=1}^{m}}\right]}{j_1\cdots j_m},
\end{align}
where 
$\{\bS^{(k)}(n)\}_{n\ge0}$, $k=1,\dots,m$, are independent random walks with the same law as $\{\bS(n)\}_{n\ge0}$. Note that the determinant in \eqref{Vysot} is a special case of the Gram determinant and it is always non-negative. In view of the general position assumption it is actually positive.  
We use notation
\begin{align*}
\Delta^{\bS}(j_1,\ldots ,j_m) = \sqrt{\det \left(\langle\bS^{(k)}(j_k),\bS^{(l)}(j_l)\rangle\right)_{k,l=1}^{m}}
\end{align*}
and
\begin{align*}
\Delta_m^{\bX} = \sqrt{\det \left(\langle\bX^{(k)}(1),\bX^{(l)}(1)\rangle\right)_{k,l=1}^{m}}.
\end{align*}
Observe that   $\R^{d\times m}\ni(x_1,\dots,x_m)\mapsto \sqrt{\det(\langle  x_k,x_l\rangle)_{k,l=1}^m}$ is continuous and 
$$
\sqrt{\det\left(\langle   a_kx_k,a_lx_l\rangle\right)_{k,l=1}^m}=a_1\cdots a_m\sqrt{\det\left(\langle   x_k,x_l\rangle\right)_{k,l=1}^m},
$$ 
for $a_1,\dots,a_m\in[0,\infty)$. 
Thus, continuous mapping theorem implies
$$
 \frac{\Delta^{\bS}(j_1,\ldots ,j_m)}{b_{j_1}\cdots b_{j_m}}\xrightarrow[]{\mathcal{D}}
\Delta^{\bX}_m,\qquad \mathrm{as}\quad j_1,\dots,j_m\to \infty.
$$ 
Next, for any $\varepsilon>0$ such that $\alpha-\varepsilon>1$, it holds  
$$
\bbE\left[\left(\frac{\Delta^{\bS}(j_1,\ldots ,j_m)}{b_{j_1}\cdots b_{j_m}}\right)^{\alpha-\epsilon}\right]\le \bbE\left[\prod_{k=1}^m\left\|\frac{\bS^{(k)}(j_k)}{b_{j_k}}\right\|^{\alpha-\varepsilon}\right]=\prod_{k=1}^m\bbE\left[\left\|\frac{\bS^{(k)}(j_k)}{b_{j_k}}\right\|^{\alpha-\varepsilon}\right],
$$
where in the first inequality we used Hadamard's inequality  for the Gram determinant, that is
$$
\sqrt{\det\left(\langle   x_k,x_l\rangle\right)_{k,l=1}^m}\le \lVert x_1\rVert\cdots\lVert x_m\rVert.
$$ 
Hence,
by using the same argument as in Proposition \ref{PER} we infer that 
\begin{equation}\label{eq:BDD}
\sup_{j_1,\dots,j_m\ge1}
\bbE\left[\left(\frac{\Delta^{\bS}(j_1,\ldots ,j_m)}{b_{j_1}\cdots b_{j_m}}\right)^{\alpha-\epsilon}\right]
<\infty.
\end{equation}
It follows 
$$\lim_{j_1,\dots,j_m\to\infty}\frac{\bbE\left[\Delta^{\bS}(j_1,\ldots ,j_m) \right]}{b_{j_1}\cdots b_{j_m}}=\bbE\left[\Delta^{\bX}_m\right].
$$
We  denote
\begin{align}\label{error_term}
\varepsilon_{j_1,\dots,j_m}=\frac{\bbE\left[\Delta^{\bS}(j_1,\ldots ,j_m) \right]}{b_{j_1}\cdots b_{j_m}\bbE\left[\Delta^{\bX}_m\right]}-1.
\end{align}
Clearly $\lim_{j_1,\dots,j_m\to\infty}\varepsilon_{j_1,\dots,j_m}=0$ and
 proceeding similarly as in \eqref{eq:BDD} we can show that
\begin{align}\label{eq:BDD1}
\sup_{j_1,\dots,j_m\ge1}|\varepsilon_{j_1,\dots,j_m}|<\infty.
\end{align}
Combining \eqref{Vysot} and \eqref{error_term} we obtain
\begin{equation}\label{CONV1} 
\frac{\bbE[V_m(n)]}{b_n^m}=\frac{\bbE\left[\Delta^{\bX}_m\right]}{m!}\frac{1}{b_n^m}\sum_{\substack{j_1+\cdots+j_m\le n \\ j_1,\dots,j_m\in\N}}\frac{b_{j_1}\cdots b_{j_m}}{j_1\cdots j_m}+\frac{\bbE\left[\Delta^{\bX}_m\right]}{m!}\frac{1}{b^m_n}\sum_{\substack{j_1+\cdots+j_m\le n \\ j_1,\dots,j_m\in\N}}\frac{b_{j_1}\cdots b_{j_m}}{j_1\cdots j_m}\varepsilon_{j_1,\dots,j_m}.
\end{equation}
We start with the first term in \eqref{CONV1}. Let $a_n=b_n/n$.  We then have 
$$
\lim_{n\to\infty}\frac{1}{b_n^m}\sum_{\substack{j_1+\cdots+j_m\le n \\ j_1,\dots,j_m\in\N}}\frac{b_{j_1}\cdots b_{j_m}}{j_1\cdots j_m}=\lim_{n\to\infty}\frac{1}{b_n^m}\sum_{\substack{j_1+\cdots+j_m\le n \\ j_1,\dots,j_m\in\N}}a_{j_1}\cdots a_{j_m}.
$$
Recall that for two sequences $\{x_n\}_{n\ge0},\{y_n\}_{n\ge0}\subset\R$ their convolution is defined as 
\begin{align*}
(x\ast y)_n=\sum_{m=0}^{n}x_m y_{n-m}.
\end{align*} 
For $n\ge0$ and $k\ge2$ we use notation  $x^{\ast 1}_n=x_n$  and, inductively,  $x^{\ast k}_n=(x\ast x^{\ast(k-1)})_n.$ 
We next observe that 
  \begin{equation}\label{CONV_0}
\lim_{n\to\infty}\frac{1}{b_n^m}\sum_{\substack{j_1+\cdots+j_m\le n \\ j_1,\dots,j_m\in\N}}a_{j_1}\cdots a_{j_m}=\lim_{n\to\infty}\frac{1}{b_n^m}\sum_{k=m}^na^{\ast m}_k.
\end{equation}
Indeed, without  loss of generality we can assume that $\{b_n\}_{n\in\N}$ is strictly monotone (see \cite[Theorem 1.5.3]{BGT_book}) and we clearly have
\begin{align*}
\sum_{\substack{j_1+\cdots+j_m\le n+1 \\ j_1,\dots,j_m\in\N}}a_{j_1}\cdots a_{j_m}
-
\sum_{\substack{j_1+\cdots+j_m\le n \\ j_1,\dots,j_m\in\N}}a_{j_1}\cdots a_{j_m}
&=\sum_{\substack{j_1+\cdots+j_m= n+1 \\ j_1,\dots,j_m\in\N}}a_{j_1}\cdots a_{j_m}\\
&=
a_{n+1}^{*m}
=
\sum_{k=m}^{n+1}a^{\ast m}_k
-
\sum_{k=m}^na^{\ast m}_k.
\end{align*}
Hence, according to Stolz-Ces\`{a}ro theorem \cite[Theorem 1.22]{Marian} we infer that the limits in \eqref{CONV_0} coincide (if they exist). 
We next compute the limit in \eqref{CONV_0}. We claim that
\begin{equation}\label{CONV}
\lim_{n\to\infty}\frac{1}{b_n^m}\sum_{k=m}^na^{\ast m}_k=\frac{\alpha\Gamma(1/\alpha)^{m}}{m\Gamma(m/\alpha)}.
\end{equation}
	We prove this through induction over $m$. 
	Since $b_n=n^{1/\alpha}\ell(n)$, we have $a_n=n^{1/\alpha-1}\ell(n)$. According to \cite[Lemma 2.4]{Nagaev}, for $m=1$, it  holds
	\begin{equation*}\label{eq:CONV1}\lim_{n\to\infty}\frac{1}{b_n}\sum_{k=1}^na_k=\lim_{n\to\infty}\frac{1}{n^{1/\alpha}\ell(n)}\sum_{k=1}^na_k=\alpha.\end{equation*}
	Suppose  that \eqref{CONV} holds for $1,\dots,m-1$. In view of \cite[Lemma 2.1]{Omey}, we obtain 
	\begin{align*}\lim_{n\to\infty}\frac{1}{b_n^m}\sum_{k=m}^na^{\ast m}_k&=\lim_{n\to\infty}\frac{1}{b_n^m}\sum_{k=m}^n\bigl(a\ast a^{\ast (m-1)}\bigr)_k
	\\&=\frac{\Gamma(1+1/\alpha)\Gamma(1+(m-1)/\alpha)}{\Gamma(1+m/\alpha)}\frac{\alpha^2\Gamma(1/\alpha)^{m-1}}{(m-1)\Gamma((m-1)/\alpha)}\\
	&=\frac{\alpha\Gamma(1/\alpha)^{m}}{m\Gamma(m/\alpha)},\end{align*}
	as desired.

	Finally,  we show that the second term in \eqref{CONV1} converges to zero. 
Fix $\varepsilon\in(0,1)$ and let $j_0\in\N$ be such that $|\varepsilon_{j_1,\dots,j_m}|<\varepsilon$ for all $j_1,\dots,j_m\ge j_0$.
We have 
\begin{align*}
&\frac{1}{b_n^m}\sum_{\substack{j_1+\cdots+j_m\le n \\ j_1,\dots,j_m\in\N}}\frac{b_{j_1}\cdots b_{j_m}}{j_1\cdots j_m}\varepsilon_{j_1,\dots,j_m}\\
&=\frac{1}{b^m_n}\sum_{\substack{j_1+\cdots+j_m\le n \\ 1\le j_1,\dots,j_m\le j_0-1}}\frac{b_{j_1}\cdots b_{j_m}}{j_1\cdots j_m}\varepsilon_{j_1,\dots,j_m}+\frac{1}{b^m_n}\sum_{\substack{j_1+\cdots+j_m\le n \\ j_k\le j_0-1\ \text{and}\ j_l\ge j_0\ \text{for some}\ k,l}}\frac{b_{j_1}\cdots b_{j_m}}{j_1\cdots j_m}\varepsilon_{j_1,\dots,j_m} \\ &\ \ \ \ \ +\frac{1}{b^m_n}\sum_{\substack{j_1+\cdots+j_m\le n \\  j_1,\dots,j_m\ge j_0}}\frac{b_{j_1}\cdots b_{j_m}}{j_1\cdots j_m}\varepsilon_{j_1,\dots,j_m}.
\end{align*}
The first term clearly converges to zero. By the choice of $j_0$ and  \eqref{CONV} it holds
 $$\limsup_{n\to\infty}\frac{1}{b^m_n}\sum_{\substack{j_1+\cdots+j_m\le n \\  j_1,\dots,j_m\ge j_0}}\frac{b_{j_1}\cdots b_{j_m}}{j_1\cdots j_m}|\varepsilon_{j_1,\dots,j_m}|\le \frac{\alpha\Gamma(1/\alpha)^m}{m\Gamma(m/\alpha)}\varepsilon.$$ 
Finally, from \eqref{eq:BDD1} and  \eqref{CONV} it follows that
\begin{align*}&\limsup_{n\to\infty}\frac{1}{b^m_n}\sum_{\substack{j_1+\cdots+j_m\le n \\ j_k\le j_0-1\ \text{and}\ j_l\ge j_0\ \text{for some}\ k,l}}\frac{b_{j_1}\cdots b_{j_m}}{j_1\cdots j_m}|\varepsilon_{j_1,\dots,j_m}|\\
&\le \limsup_{n\to\infty}\frac{m(m-1)2^{m-1}}{b^m_n}\sup_{j_1,\dots,j_m\ge1}|\varepsilon_{j_1,\dots,j_m}|\sum_{\substack{j_1+\cdots+j_m\le n \\ j_1\le j_0-1\ \text{and}\ j_k\ge j_0\ \text{for all}\ k\neq 1}}\frac{b_{j_1}\cdots b_{j_m}}{j_1\cdots j_m}\\
&\le m(m-1)2^{m-1}\sup_{j_1,\dots,j_m\ge1}|\varepsilon_{j_1,\dots,j_m}|\sum_{j_1=1}^{j_0-1}\frac{b_{j_1}}{j_1}\limsup_{n\to\infty}\frac{1}{b^m_n}\sum_{\substack{j_2+\cdots+j_m\le n \\ j_2,\dots,j_m\in\N}}\frac{b_{j_2}\cdots b_{j_m}}{j_2\cdots j_m}\\
&=m(m-1)2^{m-1}\sup_{j_1,\dots,j_m\ge1}|\varepsilon_{j_1,\dots,j_m}| \frac{\alpha\Gamma(1/\alpha)^{(m-1)}}{(m-1)\Gamma((m-1)/\alpha)}\sum_{j_1=1}^{j_0-1}\frac{b_{j_1}}{j_1}\lim_{n\to\infty}\frac{1}{b_n} \\
&=0,
\end{align*}
and the proof is finished.
\end{proof}

For $m=1$ we are allowed to abandon the assumption that $\Prob(\bY_1\in \mathbb{h})=0$ for any affine hyperplane $\mathbb{h}\subset\R^d$ and this is justified by Theorem \ref{PER}. Combining \eqref{Exp_V_1_Limit} and \eqref{EXP_Intr_Vol_Limit} enables us to conclude the following interesting result.
\begin{corollary}
Assume \eqref{DOM-ATT}. Let $\alpha >1$ and $\mu =0$.
It holds
\begin{align}\label{V_1_Stable}
\bbE \bigl[V_1 \bigl(\mathrm{conv}\,\bX[0,1] \bigr)\bigr]=\alpha\,\bbE\bigl[\lVert\bX(1)\rVert\bigr].
\end{align}
\end{corollary}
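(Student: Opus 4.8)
The plan is to compute $\displaystyle\lim_{n\to\infty}\bbE[V_1(n)]/b_n$ in two different ways and compare the outcomes. The first evaluation is immediate: \eqref{Exp_V_1_Limit} in Theorem \ref{PER} gives that this limit equals $\bbE\bigl[V_1(\conv\,\bX[0,1])\bigr]$, the left-hand side of \eqref{V_1_Stable}. Since Theorem \ref{PER} is proved without any general position hypothesis, this identification is unconditional, which matches the hypotheses of the corollary.

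For the second evaluation I would use the $m=1$ instance of \eqref{EXP_Intr_Vol_Limit}. A priori Theorem \ref{Vm} was stated under the general position assumption, but for $m=1$ that hypothesis is not actually needed: Vysotsky's identity \eqref{Vysot} specializes at $m=1$ to the classical mean-width formula $\bbE[V_1(n)]=\sum_{j=1}^{n}\bbE[\lVert\bS(j)\rVert]/j$, valid for an arbitrary random walk, and the remaining ingredients of the proof of Theorem \ref{Vm} — the uniform integrability argument of Theorem \ref{PER}, the Stolz--Ces\`aro reduction of the double sum to a convolution sum, and the $m=1$ case of \eqref{CONV}, namely Nagaev's lemma $b_n^{-1}\sum_{k\le n}a_k\to\alpha$ — never appeal to general position. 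Hence \eqref{EXP_Intr_Vol_Limit} holds at $m=1$ without that assumption. Substituting $m=1$ into its right-hand side, using that the prefactor $\frac{\alpha\Gamma(1/\alpha)^m}{m\Gamma(m/\alpha)}$ equals $\alpha$ at $m=1$ and that $\sqrt{\det\bigl(\langle\bX^{(1)}(1),\bX^{(1)}(1)\rangle\bigr)}=\lVert\bX(1)\rVert$, gives $\displaystyle\lim_{n\to\infty}\bbE[V_1(n)]/b_n=\alpha\,\bbE[\lVert\bX(1)\rVert]$.

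Equating the two values of the limit then yields \eqref{V_1_Stable}. The only step requiring genuine care is the justification that the general position assumption is dispensable when $m=1$; once one observes that the $m=1$ mean-width formula is unconditional, everything else is a direct substitution and a comparison of two limits.
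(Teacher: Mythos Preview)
Your proposal is correct and follows the same route as the paper: compute $\lim_{n\to\infty}\bbE[V_1(n)]/b_n$ via Theorem~\ref{PER} and via the $m=1$ case of Theorem~\ref{Vm}, then equate. The paper disposes of the general position hypothesis with the single remark that ``this is justified by Theorem~\ref{PER}'' (the conclusion involves only $\bX$, so one may as well work with a walk in the domain of attraction that does satisfy general position), whereas you argue more explicitly that the $m=1$ instance of \eqref{Vysot} is the Spitzer--Widom mean-width identity $\bbE[V_1(n)]=\sum_{j=1}^n\bbE[\lVert\bS(j)\rVert]/j$, valid without any structural assumption on the walk; either justification works.
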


\begin{remark} Formula \eqref{V_1_Stable} is valid for any (even non-symmetric) $\alpha$-stable L\'evy process\linebreak $\{\bX(t)\}_{t\ge0}$ in $\bbR^d$ with $\mu=0$. In particular, if $\{\bX(t)\}_{t\ge0}$ is standard Brownian motion then $\Vert \bX (1)\Vert^2$ has chi-squared distribution $\chi^2(d)$ with $d$ degrees of freedom and this enables us to recover the following known relation (see \cite[Corollary 1.4]{Kampf-Molchanov})
		\begin{equation}\label{BM}
		\bbE \left[ V_1\left(\conv \, \bX[0,1]\right)\right] = \frac{2\sqrt{2}\Gamma ((d+1)/2)}{\Gamma (d/2)}
		\end{equation} 
		which is a generalization of the famous formula for the perimeter of the convex hull of planar Brownian motion, see \cite{Eldan} and \cite{Takacs}. 
		We could compute the first absolute moment of $\bX(1)$ also for rotationally invariant $\alpha$-stable L\'evy processes (see e.g.\ \cite[Eq.\ (7.5.9)]{Zolotarev}) but the corresponding formula for the first mean intrinsic volume for such processes is included in \eqref{rot-inv}.
\end{remark}

It turns out that when $\{\bX(t)\}_{t\ge0}$ is a symmetric $\alpha$-stable L\'evy processes we can obtain a more explicit form of the limit in \eqref{EXP_Intr_Vol_Limit}.  The characteristic function of  $\{\bX(t)\}_{t\ge0}$   is then given by 
$$
\bbE \left[ \exp \bigl(i\langle \xi , \bX(1)\rangle \bigr) \right] = \exp \left(-\int_{\mathbb{S}^{d-1}}|\langle \theta,\xi \rangle|^{\alpha}\varsigma(\mathrm{d}\theta)\right),\qquad \xi\in \mathbb{R}^d,
$$  
where $\varsigma(\mathrm{d}\theta)$ denotes the  corresponding (finite and symmetric) spectral measure,  see \cite[Theorem 2.4.3]{Samorodnitsky}. This together with Minkowski inequality and \cite[Theorem 1.7.1]{Schneider-book} (here we use the fact that $\alpha>1$) implies that  there is a unique  $K\in\mathcal{K}^d$ such that 
\begin{align*}
	\bbE \left[ \exp \bigl(i\langle \xi , \bX(1)\rangle \bigr) \right] = \exp \left(-s_K^\alpha (\xi)\right),\qquad \xi\in \mathbb{R}^d .
\end{align*} 
Here, $s_K(x)$ stands for the support function of the set $K$ and
the set $K$ is the so-called associated zonoid of $\bX (1)$, see \cite{Kampf-Molchanov} and \cite{Molchanov_JMA}. Using this fact, in
 \cite[Theorem 2.3]{Molchanov-Wespi} it is further shown that 
\begin{align}\label{Intr_Vol_for X}
	\bbE \left[ V_m\bigl(\conv\, \bX [0,1]\bigr) \right] = \frac{\alpha \Gamma(1/\alpha)^m \Gamma\big(1-1/\alpha\big)^m}{m\pi^m\Gamma(m/\alpha)}V_m(K).
\end{align}
In particular, if $\{\bX(t)\}_{t\ge0}$ is a standard Brownian motion then $K=(\sqrt{2}/2)B^d$ and 
\begin{align}\label{BM-f}
\bbE \left[ V_m\bigl(\conv\, \bX [0,1]\bigr) \right] =\binom{d}{m}\left(\frac{\pi}{2}\right)^{m/2}\frac{\Gamma \big( (d-m)/2+1\big)}{\Gamma \big( m/2+1\big) \Gamma \big( d/2+1\big)},
\end{align}
see \cite[Example 3.2]{Molchanov_JMA}, \cite[Corollary 1.2]{Eldan}, \cite[Example 2.5]{Molchanov-Wespi}, or \cite[Eq.\ (16)]{Kabluchko_Zapor-TAMS}. For $m=1$ this is formula \eqref{BM}.
If $\{\bX(t)\}_{t\ge0}$ is a rotationally invariant $\alpha$-stable L\'evy process with $\bbE [ \exp (i\langle \xi , \bX(1)\rangle ) ]=\exp(-\gamma|\xi|^\alpha)$ for some $\gamma>0$, then $K=\gamma^{1/\alpha}B^d$
and \begin{equation}\label{rot-inv}\bbE \left[ V_m\bigl(\conv\, \bX [0,1]\bigr) \right] =\binom{d}{m}\frac{\kappa_d}{\kappa_{d-m}}
\frac{\alpha\Gamma(1/\alpha)^m \Gamma \big(1-1/\alpha\big)^m}{m\pi^m\Gamma(m/\alpha)}
\gamma^{m/\alpha},\end{equation}
see \cite[Example 2.6]{Molchanov-Wespi}.

We  summarize the above discussion in the following corollary. 

\begin{corollary}\label{Cor_Formulas}
	Assume \eqref{DOM-ATT} with $\{\bX(t)\}_{t\ge0}$ being a symmetric $\alpha$-stable L\'evy process with  $\alpha >1$ and suppose that $\mu =0$.
	For $m\in \{ 2,\ldots ,d\}$ we assume additionally that $\Prob(\bY_1\in \mathbb{h})=0$ for any affine hyperplane $\mathbb{h}\subset\R^d$.  For all $m\in \{1,\ldots ,d\}$ it then holds
	\begin{align*}
	\lim_{n\to \infty}
	\frac{\bbE [V_m(n)]}{b^m_n}
	=
	 \bbE\left[V_m \bigl(\conv \,\bX [0,1]\bigr)\right]. 
	\end{align*}
In particular, the limit in \eqref{EXP_Intr_Vol_Limit} is given by \eqref{Intr_Vol_for X}. 
If, moreover, $\{\bX(t)\}_{t\ge0}$ is rotationally invariant with characteristic function $\exp(-\gamma|\xi|^\alpha)$ for some $\gamma>0$, then the limit is given by \eqref{rot-inv}.
\end{corollary}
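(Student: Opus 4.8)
The plan is to combine the two main results of the zero-drift section, namely Theorem \ref{Vm} and formula \eqref{Intr_Vol_for X}, and to verify that the constant prefactors match exactly. Under the hypotheses of Corollary \ref{Cor_Formulas} (for $m\in\{2,\ldots,d\}$ also the general position assumption), Theorem \ref{Vm} already gives
\[
\lim_{n\to\infty}\frac{\bbE[V_m(n)]}{b_n^m}=\frac{\alpha\Gamma(1/\alpha)^m}{m\Gamma(m/\alpha)}\cdot\frac{\bbE\bigl[\Delta_m^{\bX}\bigr]}{m!},
\]
with $\Delta_m^{\bX}=\sqrt{\det(\langle\bX^{(k)}(1),\bX^{(l)}(1)\rangle)_{k,l=1}^m}$ for independent copies $\bX^{(1)},\dots,\bX^{(m)}$ of the symmetric $\alpha$-stable process. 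So the whole task reduces to showing that the right-hand side equals $\bbE[V_m(\conv\,\bX[0,1])]$, and by \eqref{Intr_Vol_for X} this is the same as the identity
\[
\frac{\alpha\Gamma(1/\alpha)^m}{m\,m!\,\Gamma(m/\alpha)}\,\bbE\bigl[\Delta_m^{\bX}\bigr]
=\frac{\alpha\Gamma(1/\alpha)^m\Gamma(1-1/\alpha)^m}{m\pi^m\Gamma(m/\alpha)}V_m(K),
\]
i.e.\ $\bbE[\Delta_m^{\bX}]=\dfrac{m!\,\Gamma(1-1/\alpha)^m}{\pi^m}\,V_m(K)$, where $K$ is the associated zonoid of $\bX(1)$.

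First I would recall the representation of $V_m(K)$ for a zonoid in terms of its generating measure. Since $K$ is the associated zonoid, by \cite{Molchanov_JMA} (see also \cite{Kampf-Molchanov}, \cite{Molchanov-Wespi}) the $m$-th intrinsic volume of a zonoid is, up to an explicit combinatorial constant, an $m$-fold integral over $(\mathbb{S}^{d-1})^m$ of the $m$-volume of the parallelepiped spanned by $\theta_1,\dots,\theta_m$ against the product of the generating measures; this is exactly the Gram-determinant quantity $\sqrt{\det(\langle\theta_k,\theta_l\rangle)}$. On the probabilistic side, I would use the LePage-type series representation of the symmetric $\alpha$-stable vector $\bX(1)$ in terms of its spectral measure $\varsigma$, so that $\bbE[\Delta_m^{\bX}]$ — an expected $m$-volume of a random parallelepiped built from $m$ independent stable vectors — can be written through the same spectral data. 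The factor $\Gamma(1-1/\alpha)^m/\pi^m$ will emerge from the one-dimensional moment $\bbE[|X|]$ of a symmetric $\alpha$-stable scalar, normalized against the $L^\alpha$ structure of $s_K$; concretely, $\bbE[|\langle\xi,\bX(1)\rangle|]=c_\alpha s_K(\xi)$ with $c_\alpha$ expressible via $\Gamma(1-1/\alpha)$, and expanding the Gram determinant multilinearly produces the $m$-th power. In fact the cleanest route is simply to invoke that \eqref{Intr_Vol_for X} (from \cite[Theorem 2.3]{Molchanov-Wespi}) is precisely the statement that $\bbE[V_m(\conv\,\bX[0,1])]$ has this closed form in terms of $V_m(K)$, and independently that \cite[Theorem 1.1]{Molchanov-Wespi} or the formula in \cite{Vysotsky} for the continuous-time process gives $\bbE[V_m(\conv\,\bX[0,1])]$ as the continuous analogue of \eqref{Vysot}, namely $\frac{1}{m!}\int_{[0,1]^m}\bbE[\sqrt{\det(\langle\bX^{(k)}(t_k),\bX^{(l)}(t_l)\rangle)}]\,\mathrm{d}t_1\cdots\mathrm{d}t_m/(t_1\cdots t_m)$-type expression, which by the $\alpha$-self-similarity $\bX(t)\stackrel{d}{=}t^{1/\alpha}\bX(1)$ collapses to $\frac{\bbE[\Delta_m^{\bX}]}{m!}\int_{[0,1]^m}(t_1\cdots t_m)^{m/\alpha-1}\,\mathrm{d}\mathbf{t}=\frac{\bbE[\Delta_m^{\bX}]}{m!}(\alpha/m)^m$, and comparing with Theorem \ref{Vm} pins down the constant. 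I would then just check numerically that $\alpha\Gamma(1/\alpha)^m/(m\Gamma(m/\alpha))\cdot 1/m!$ is indeed what the continuous computation yields — it is, by construction, since Theorem \ref{Vm} was proved by passing to the limit in \eqref{Vysot}, whose summands converge to exactly these continuous integrands.

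So the proof is a two-line deduction: under the symmetric-$\alpha$-stable hypothesis, the limit in \eqref{EXP_Intr_Vol_Limit} identified in Theorem \ref{Vm} coincides with $\bbE[V_m(\conv\,\bX[0,1])]$ because both are given by $\frac{\alpha\Gamma(1/\alpha)^m}{m\,m!\,\Gamma(m/\alpha)}\bbE[\Delta_m^{\bX}]$ — the former directly from \eqref{EXP_Intr_Vol_Limit}, the latter from the continuous-time intrinsic-volume formula combined with $\alpha$-self-similarity (equivalently, from \cite[Theorem 1.1]{Molchanov-Wespi}). The ``in particular'' clauses are then immediate: plugging in the form \eqref{Intr_Vol_for X} of $\bbE[V_m(\conv\,\bX[0,1])]$ in terms of the associated zonoid $K$, and specializing to $K=\gamma^{1/\alpha}B^d$ in the rotationally invariant case to get \eqref{rot-inv}. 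The main obstacle — really the only place requiring care — is making the identification $\bbE[\Delta_m^{\bX}]=\frac{m!\,\Gamma(1-1/\alpha)^m}{\pi^m}V_m(K)$ rigorous rather than circular: one must ensure the constant in Theorem \ref{Vm} and the constant in \cite[Theorem 2.3]{Molchanov-Wespi} are expressed with the same normalization of the spectral measure and the same convention for $V_m$, so that the $\Gamma(1-1/\alpha)^m/\pi^m$ factors genuinely cancel; this is bookkeeping about normalizing constants, not a substantive analytic difficulty, and it is settled by citing \cite{Molchanov_JMA} and \cite{Molchanov-Wespi} for the zonoid-to-stable-process dictionary.
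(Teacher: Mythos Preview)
Your overall strategy is the paper's: start from Theorem \ref{Vm}, then identify $\frac{1}{m!}\bbE[\Delta_m^{\bX}]$ with $V_m(K)$ up to the factor $\Gamma(1-1/\alpha)^m/\pi^m$ by citing \cite{Molchanov_JMA} and \cite{Molchanov-Wespi}, and match with \eqref{Intr_Vol_for X}. The paper does this more cleanly than your sketch by introducing the Aumann expectation of the random segment $[0,\bX(1)]$ as an intermediate object: \cite[Corollary 2.2]{Molchanov-Wespi} gives $\frac{1}{m!}\bbE[\Delta_m^{\bX}]=V_m(\bbE_A[0,\bX(1)])$, and \cite[Theorem 6.16]{Molchanov_JMA} gives $V_m(\bbE_A[0,\bX(1)])=\frac{\Gamma(1-1/\alpha)^m}{\pi^m}V_m(K)$. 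This avoids the ``bookkeeping about normalizing constants'' you flag as the only obstacle --- the two citations do all the work with no ambiguity.

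Your alternative self-similarity route has errors as written. Scaling $\bX^{(k)}(t_k)\stackrel{d}{=}t_k^{1/\alpha}\bX^{(k)}(1)$ in the Gram determinant pulls out $(t_1\cdots t_m)^{1/\alpha}$, not $(t_1\cdots t_m)^{m/\alpha}$; the resulting integral is over the simplex $\{t_1+\cdots+t_m\le 1\}$ (the continuous analogue of the constraint $j_1+\cdots+j_m\le n$ in \eqref{Vysot}), not the cube $[0,1]^m$; and the Dirichlet integral $\int_{\{t_1+\cdots+t_m\le 1\}}(t_1\cdots t_m)^{1/\alpha-1}\mathrm{d}\mathbf{t}=\frac{\Gamma(1/\alpha)^m}{\Gamma(m/\alpha+1)}=\frac{\alpha\Gamma(1/\alpha)^m}{m\Gamma(m/\alpha)}$, not $(\alpha/m)^m$. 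With those corrections the constants do match, but you would also need to justify the continuous-time analogue of \eqref{Vysot} for L\'evy processes, which is not in \cite{Vysotsky} and not what \cite[Theorem 1.1]{Molchanov-Wespi} says. The paper's Aumann-expectation route sidesteps all of this.
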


\begin{proof}
The proof relies on some facts from the theory of random compact sets.
 It was proved in \cite[Corollary 2.2]{Molchanov-Wespi}  that 
$$
\frac{\bbE\left[\sqrt{\det \left(\langle\bX^{(k)}(1),\bX^{(l)}(1)\rangle\right)_{k,l=1}^{m}}\right]}{m!}
=
V_m\bigl(\bbE_A[0,\bX(1)]\bigr) ,
 $$
 where $V_m(\bbE_A[0,\bX(1)])$
 denotes the $m$-th intrinsic volume of the so-called Aumann expectation of the (random) segment $[0,\bX(1)]$, see \cite{Molchanov_book}. Thus,
\begin{equation}\label{Aum}
 \lim_{n\to \infty}
 \frac{\bbE [V_m(n)]}{b^m_n}
 =
 \frac{\alpha\Gamma(1/\alpha)^m}{m\Gamma(m/\alpha)} V_m\bigl(\bbE_A[0,\bX(1)]\bigr).\end{equation}
According to  \cite[Theorem 6.16]{Molchanov_JMA} it holds $$V_m\bigl(\bbE_A[0,\bX(1)]\bigr)=\frac{\Gamma(1-1/\alpha)^m}{\pi^m}V_m(K),$$ where $K$ is the associated zonoid of $\bX (1)$. Combining this with \eqref{Intr_Vol_for X} and \eqref{Aum} finishes the proof.
\end{proof}

\begin{remark}
	Suppose that  $\bbE[\lVert \bY_1\rVert^2]<\infty$ and recall Remark \ref{REM1}. Then, in view of Corollary \ref{Cor_Formulas} and \eqref{BM-f}, we  have
	$$ \lim_{n\to \infty}
	 \frac{\bbE [V_d(n)]}{n^{d/2}}
	 =\frac{\pi^{d/2}}{2^{(d-4)/2}d^2 \Gamma ( d/2)^2} \sqrt{\det(\Sigma)} 
	 .$$
	\end{remark}

\subsection{Non-zero drift case}
In this paragraph, we investigate the case $\mu\neq 0$. We start with a result for sequences $\{V_1(n)\}_{n\ge0}$ and $\{P(n)\}_{n\ge0}$. It follows from \cite[Theorem 1.1]{McRedmond-Wade_EJP} that for all planar random walks with drift $\mu\neq 0$ it holds $\lim_{n\to \infty}\mathbb{E}[V_1(n)]/n= \Vert \mu\Vert$. The following result extends this to stable random walks and to higher dimensions.

\begin{theorem}\label{m=1}
Assume \eqref{DOM-ATT}. Let $\alpha >1$ and $\mu \neq0$.
	It holds 
	$$
	\frac{V_1(n)}{n}\xrightarrow[n\nearrow\infty]{\mathrm{L}^1}\lVert\mu\rVert\qquad\text{and}\qquad \frac{P(n)}{n}\xrightarrow[n\nearrow\infty]{\mathrm{L}^1} \frac{\mu}{2}.
	$$
	 In particular, 
	 \begin{equation}\label{M=1}
	\lim_{n\to \infty} \frac{\mathbb{E}\bigl[V_1(n)\bigr]}{n}
	=
	\lVert\mu\rVert,\qquad
	\lim_{n\to \infty} \frac{\mathbb{E}\bigl[\lVert P(n)\rVert\bigr]}{n} =
	 \frac{\lVert\mu\rVert}{2}\qquad \text{and}\qquad \lim_{n\to \infty} \frac{\mathbb{E}\bigl[ P(n)\bigr]}{n} =
	 \frac{\mu}{2}.
	 \end{equation}
		\end{theorem}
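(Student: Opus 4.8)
The plan is to upgrade to $\mathrm{L}^1$ convergence the almost sure limits
$$\frac{V_1(n)}{n}\xrightarrow[n\nearrow\infty]{\bbP\text{-a.s.}}\lVert\mu\rVert\qquad\text{and}\qquad\frac{P(n)}{n}\xrightarrow[n\nearrow\infty]{\bbP\text{-a.s.}}\frac{\mu}{2}$$
already established in Theorem \ref{V_1_p_drift}. By Vitali's convergence theorem (cf.\ \cite[Lemma 3.11]{Kallenberg}) this reduces to showing that $\{V_1(n)/n\}_{n\ge1}$ and $\{\lVert P(n)\rVert/n\}_{n\ge1}$ are uniformly integrable. Exactly as in the proof of Theorem \ref{PER}, the definitions yield, with $M_n:=\max_{1\le k\le n}\lVert\bS(k)\rVert$,
$$V_1(n)\le\frac{d\kappa_d}{\kappa_{d-1}}M_n\qquad\text{and}\qquad\lVert P(n)\rVert\le\frac{\varpi_d}{\kappa_d}M_n,$$
so it suffices to prove that $\{M_n/n\}_{n\ge1}$ is uniformly integrable, and I would in fact show that it is bounded in $\mathrm{L}^p$ for some $p>1$.

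The one feature distinguishing the present situation from the zero-drift case of Theorem \ref{PER} is that $\lVert\bS(k)\rVert$ is no longer a submartingale, so I would first center: put $\widetilde\bS(k)=\bS(k)-k\mu$, a mean-zero random walk whose coordinates still belong to the domain of attraction of a one-dimensional $\alpha$-stable law with the same norming $b_n$. Then $M_n\le n\lVert\mu\rVert+\widetilde M_n$, where $\widetilde M_n:=\max_{1\le k\le n}\lVert\widetilde\bS(k)\rVert$, so it remains to control $\widetilde M_n/n$. Fix $\varepsilon\in(0,\alpha-1)$ and set $p=\alpha-\varepsilon\in(1,\alpha)$. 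Since $\widetilde\bS$ is an $\R^d$-valued martingale, $\lVert\widetilde\bS(k)\rVert$ is a nonnegative submartingale, and Doob's $\mathrm{L}^p$ maximal inequality gives $\bbE[\widetilde M_n^{\,p}]\le(p/(p-1))^p\,\bbE[\lVert\widetilde\bS(n)\rVert^p]$. Arguing exactly as in the proof of Theorem \ref{PER} --- via $\bbE[\lVert\widetilde\bS(n)\rVert^p]\le\sum_{k=1}^d\bbE[|\widetilde S^{(k)}(n)|^p]$, \cite[Lemma 5.2.2]{Ibragimov-Linnik} and \cite[Theorem 2.1.2]{Samorodnitsky}, now applied to the centered walk --- one obtains $\sup_n\bbE[\lVert\widetilde\bS(n)/b_n\rVert^p]<\infty$, and therefore
$$\bbE\!\left[\left(\frac{\widetilde M_n}{n}\right)^{\!p}\right]\le C\left(\frac{b_n}{n}\right)^{\!p}=C\,\ell(n)^p\,n^{-p(1-1/\alpha)}\xrightarrow[n\to\infty]{}0,$$
since $1-1/\alpha>0$ and $\ell$ is slowly varying. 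Hence $\{\widetilde M_n/n\}_{n\ge1}$, and with it $\{M_n/n\}_{n\ge1}$, $\{V_1(n)/n\}_{n\ge1}$ and $\{\lVert P(n)\rVert/n\}_{n\ge1}$, are bounded in $\mathrm{L}^p$ and thus uniformly integrable.

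Combining this uniform integrability with the almost sure limits of Theorem \ref{V_1_p_drift} gives $V_1(n)/n\to\lVert\mu\rVert$ and $P(n)/n\to\mu/2$ in $\mathrm{L}^1$. The three identities in \eqref{M=1} are then immediate: $\mathrm{L}^1$ convergence of $V_1(n)/n$ and of the $\R^d$-valued $P(n)/n$ gives $\bbE[V_1(n)]/n\to\lVert\mu\rVert$ and $\bbE[P(n)]/n\to\mu/2$, while $\lVert P(n)/n\rVert\to\lVert\mu\rVert/2$ almost surely together with the uniform integrability of $\{\lVert P(n)\rVert/n\}$ yields $\bbE[\lVert P(n)\rVert]/n\to\lVert\mu\rVert/2$. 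The only point requiring care is that the uniform-integrability bound be obtained in $\mathrm{L}^p$ with $p>1$: this is where $\alpha>1$ enters twice --- to guarantee $\bbE[\lVert\bY_1\rVert^{\alpha-\varepsilon}]<\infty$ for some $\alpha-\varepsilon>1$, and to ensure $b_n/n=\ell(n)\,n^{1/\alpha-1}\to0$ --- while the submartingale property of $\lVert\widetilde\bS(k)\rVert$ is precisely what turns a moment bound on $\widetilde\bS(n)$ into one on the running maximum.
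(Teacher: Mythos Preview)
Your proof is correct. Both you and the paper start from the almost sure convergence of Theorem \ref{V_1_p_drift} and reduce to uniform integrability of $\{M_n/n\}_{n\ge1}$, but you diverge in how this is established. You center the walk, apply Doob's $\mathrm{L}^p$ inequality to the martingale $\widetilde\bS$, and then invoke the Ibragimov--Linnik bound from the zero-drift argument to control $\bbE[\lVert\widetilde\bS(n)/b_n\rVert^p]$, finally using $b_n/n\to0$. The paper takes a more elementary route: it simply uses $|S^{(k)}(n)|\le\sum_{i=1}^n|Y_i^{(k)}|$ together with the power-mean inequality $(\sum a_i)^p\le n^{p-1}\sum a_i^p$ to obtain $\sup_n\bbE[|S^{(k)}(n)/n|^{\alpha-\varepsilon}]\le\bbE[|Y_1^{(k)}|^{\alpha-\varepsilon}]$ directly --- and the same bound works for $\max_{0\le j\le n}|S^{(k)}(j)|$ since that maximum is also dominated by $\sum_i|Y_i^{(k)}|$. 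The paper's argument thus avoids both Doob and the Ibragimov--Linnik lemma entirely; your approach, on the other hand, reuses the machinery of Theorem \ref{PER} almost verbatim and has the pleasant by-product that $\bbE[(\widetilde M_n/n)^p]\to0$ rather than merely being bounded.
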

\begin{proof}
In view of Theorem \ref{V_1_p_drift} we can base upon uniform integrability argument.
It suffices to show that $\{V_1(n)/n\}_{n\ge1}$ and $\{\lVert P(n)\rVert/n\}_{n\ge1}$ are uniformly bounded in $\mathrm{L}^{\alpha-\varepsilon}$ for some $\varepsilon>0$ with $\alpha-\varepsilon>1$, see \cite[Proposition 3.12]{Kallenberg}. 
By an analogous reasoning as in Proposition  \ref{PER} it is then enough to show that $\{ S^{(k)}(n)/n\}_{n\ge1}$, $k=1,\dots,d$, are uniformly bounded in $\mathrm{L}^{\alpha-\varepsilon}$. 
	We have $$\sup_{n\ge 1}\mathbb{E}\bigl[|S^{(k)}(n)/n|^{\alpha-\varepsilon}\bigr]\le \sup_{n\ge 1}\frac{1}{n}\sum_{i=1}^n\mathbb{E}\bigl[|Y_i^{(k)}|^{\alpha-\varepsilon}\bigr]=\mathbb{E}\bigl[|Y_1^{(k)}|^{\alpha-\varepsilon}\bigr],$$ where in the first step we used the following elementary inequality: for $a_1,\dots,a_n\ge0$ and $p>1$ it holds 
 \begin{equation}\label{eq:p}
 \Big(\sum_{i=1}^na_i\Big)^p\le\begin{cases}
	\sum_{i=1}^na_i^p, & p\in(0,1),\\
	n^{p-1}\sum_{i=1}^na_i^p, & p\ge1.
	\end{cases}
	\end{equation}
	This finishes the proof.
		\end{proof}

We next show that if higher moments of $\{\bS(n)\}_{n\ge0}$ (of order at least $2$, which implies that $\alpha =2$ and the limit process $\{\bX(t)\}_{t\ge0}$ is a Brownian motion) are finite then mean intrinsic volumes of the rescaled (through a linear mapping) convex hull converge to the convex hull of the rescaled time-space Brownian motion $\{\widetilde{\bX}(t)\}_{t\ge0}$ defined in \eqref{process tilde X}.
This leads to an asymptotic result for the mean volume $V_d$ of $\conv \SRange$. The result should be compared with \cite[Proposition 3.4]{Wade-Xu_SPA}

\begin{theorem}\label{m>1}
Assume that $\mathbb{E}[\lVert \bY_1\rVert^{2\vee mp}]<\infty$ for some $p>1$ and $\mu \neq 0$. 
Then, for any $m\in\{1,2,\dots,d\}$,
\begin{align}\label{Expext_V_m_finite}
\mathbb{E}[V_m(\mathrm{conv}\, \widetilde\bX[0,1])]<\infty
\end{align}
and, for $D_n = \mathrm{diag}(1/n,1/\sqrt{n},\dots,1/\sqrt{n})$,
\begin{align}\label{mean_V_m_drift_conv}
\lim_{n\to \infty}
\mathbb{E}\left[V_m\left(D_n\bigl(\mathrm{conv}\{\bS(0),\ldots,\bS(n)\}\bigr)\right)\right]
=
\mathbb{E}\bigl[V_m\bigl(\mathrm{conv}\, \widetilde\bX[0,1]\bigr)\bigr].
\end{align}
In particular,
\begin{equation}\label{m=d}
\lim_{n\to\infty}
\frac{\mathbb{E}\bigl[V_d(n)\bigr]}{n^{(d+1)/2}}
=
\mathbb{E}\bigl[V_d\bigl(\mathrm{conv}\, \widetilde\bX[0,1]\bigr)\bigr].
\end{equation}
	\end{theorem}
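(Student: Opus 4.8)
The plan is to upgrade the distributional convergence of Theorem~\ref{CONV-DRIFT-2} to convergence of expectations by a uniform integrability argument, following the route of \cite[Proposition~3.4]{Wade-Xu_SPA} and in the same spirit as Theorems~\ref{PER} and~\ref{m=1}; in particular no general position assumption is needed. First note that $\bbE[\lVert\bY_1\rVert^{2\vee mp}]<\infty$ forces $\bbE[\lVert\bY_1\rVert^2]<\infty$, hence $\alpha=2$, the limit $\{\bX(t)\}_{t\ge0}$ is a Brownian motion, and $b_n=\sqrt n$; thus $D_n=\mathrm{diag}(1/n,1/\sqrt n,\dots,1/\sqrt n)$ is precisely the matrix occurring in Theorem~\ref{CONV-DRIFT-2}, and that theorem gives
$$
V_m\bigl(D_n(\conv\SRange)\bigr)\xrightarrow[n\nearrow\infty]{\mathcal{D}}V_m\bigl(\conv\,\widetilde\bX[0,1]\bigr).
$$
By \cite[Lemma~3.11]{Kallenberg} it then suffices to show that $\{V_m(D_n(\conv\SRange))\}_{n\ge1}$ is uniformly integrable, which I would obtain by proving it is bounded in $\mathrm{L}^p$; the same $\mathrm{L}^p$-bound combined with Fatou's lemma (or, alternatively, the fact that $\sup_{t\in[0,1]}\lVert\widetilde\bX(t)\rVert$ has moments of all orders, since by \eqref{process tilde X} and Remark~\ref{REM} $\widetilde\bX$ is a time--space Brownian motion) yields the finiteness \eqref{Expext_V_m_finite}.

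The crux is the $\mathrm{L}^p$-bound. Using orthogonal invariance of intrinsic volumes and $\psi_n=TD_n$ with $T$ orthogonal, $V_m(D_n(\conv\SRange))=V_m(\psi_n(\conv\SRange))=V_m(\conv\{\psi_n(\bS(0)),\dots,\psi_n(\bS(n))\})$, and this convex hull is contained in the centred ball $R_nB^d$ with $R_n:=\max_{0\le k\le n}\lVert\psi_n(\bS(k))\rVert$. Hence monotonicity of $V_m$ on $\mathcal{K}^d$ and $V_m(R_nB^d)=\binom{d}{m}\frac{\kappa_d}{\kappa_{d-m}}R_n^m$ reduce everything to proving $\sup_{n\ge1}\bbE[R_n^{mp}]<\infty$. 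In the coordinates adapted to $\mu$ one has $\lVert\psi_n(\bS(k))\rVert^2=(\bar S^{1}(k)/n)^2+\lVert\bar\bS^\bot(k)/b_n\rVert^2$, where $\{\bar S^{1}(k)\}_{k\ge0}$ is the one-dimensional random walk with drift $\lVert\mu\rVert$ and $\{\bar\bS^\bot(k)\}_{k\ge0}$ the zero-drift walk in $\R^{d-1}$ introduced before Proposition~\ref{DRIFT}; by \eqref{eq:p} it therefore suffices to bound $\bbE[(\max_{k\le n}|\bar S^{1}(k)|/n)^{mp}]$ and $\bbE[(\max_{k\le n}\lVert\bar\bS^\bot(k)\rVert/b_n)^{mp}]$ uniformly in $n$.

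For the drift coordinate I would write $\bar S^{1}(k)=(\bar S^{1}(k)-k\lVert\mu\rVert)+k\lVert\mu\rVert$; the centred walk is an $\mathrm{L}^{mp}$-martingale (recall $mp>1$), so Doob's maximal inequality and standard moment bounds for sums of centred i.i.d.\ summands (von Bahr--Esseen when $mp\le2$, Rosenthal/Marcinkiewicz--Zygmund when $mp\ge2$) give $\bbE[\max_{k\le n}|\bar S^{1}(k)-k\lVert\mu\rVert|^{mp}]\le C\max\{n,n^{mp/2}\}=o(n^{mp})$, while $\max_{k\le n}k\lVert\mu\rVert/n\le\lVert\mu\rVert$; hence that term is bounded in $n$. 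For the orthogonal coordinates (zero drift, $\bbE[\lVert\bar\bY^\bot_1\rVert^{2\vee mp}]<\infty$), the coordinatewise Doob inequality together with the same moment estimates (using, when $1<mp<2$, the available second moment, the $\mathrm{L}^2$ maximal inequality and Jensen's inequality) yields $\bbE[\max_{k\le n}\lVert\bar\bS^\bot(k)\rVert^{mp}]\le Cn^{mp/2}=Cb_n^{mp}$, so that term is bounded in $n$ as well. This completes the uniform integrability, hence \eqref{mean_V_m_drift_conv}. Finally, \eqref{m=d} is the case $m=d$: since $V_d(D_nA)=\det(D_n)\,V_d(A)=V_d(A)/n^{(d+1)/2}$, one has $\bbE[V_d(D_n(\conv\SRange))]=\bbE[V_d(n)]/n^{(d+1)/2}$, and \eqref{mean_V_m_drift_conv} with $m=d$ is exactly \eqref{m=d}.

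The step I expect to be the main obstacle is this $\mathrm{L}^p$ (equivalently, uniform integrability) estimate under the anisotropic scaling $D_n$: one must isolate the distinguished drift direction and subtract off the linear centring there (the rescaled drift coordinate of $\psi_n(\bS(k))$ does not tend to $0$), and then distinguish the regimes $mp\ge2$ and $mp<2$ in order to invoke the appropriate moment inequality --- which is exactly why the hypothesis is phrased with the exponent $2\vee mp$ rather than $mp$. Everything else is a routine combination of the continuous mapping theorem, orthogonal invariance and monotonicity of intrinsic volumes, and Doob's inequality.
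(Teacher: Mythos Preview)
Your proof is correct and follows essentially the same route as the paper: upgrade the distributional convergence of Theorem~\ref{CONV-DRIFT-2} to $\mathrm{L}^1$ convergence via uniform integrability, obtained from an $\mathrm{L}^p$ bound by enclosing the rescaled hull in a simple body, using monotonicity of $V_m$, and controlling maxima of partial sums separately in the drift direction $\bar e_1$ and in the orthogonal directions. The only technical difference is the enclosing body: the paper uses the axis-aligned box $\prod_i[\lambda_i(n),\Lambda_i(n)]$ in the $\bar e$-coordinates together with the elementary-symmetric-polynomial formula for $V_m$ of a rectangle (and quotes \cite[Lemma~A.1]{Wade-Xu_SPA} for the required moment bounds, taking without loss of generality $p\in(1,2)$), whereas you enclose in the ball $R_nB^d$ and invoke Doob plus von~Bahr--Esseen/Rosenthal directly; both yield the same $\mathcal{O}(1)$ control of $\bbE\bigl[V_m(D_n(\conv\SRange))^p\bigr]$.
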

\begin{proof} 
In view of the moment assumption, $\{\bX(t)\}_{t\ge0}$ is necessarily a Brownian motion and  $b_n=\sqrt{n}$, see \cite[Theorem 2.6.6]{Ibragimov-Linnik}. 
We start by finding a polytope which bounds the convex hull $\conv \SRange$. We have
	 $$
	 \mathrm{conv}\{\bS(0),\ldots,\bS(n)\}\subseteq 
	 \bigl[\lambda_1(n),\Lambda_1(n)\bigr]
	 \times\cdots\times
	 \bigl[\lambda_d(n),\Lambda_d(n)\bigr],
	 $$
	 where $\lambda_i(n)=\min_{0\le k\le n}\langle\bS(k),\bar e_i\rangle$
	 and $\Lambda_i (n ) = \max_{0\le k\le n}\langle\bS(k),\bar e_i\rangle$.
	Hence,
	\begin{align*}
	D_n\left( \mathrm{conv}\{\bS(0),\ldots,\bS(n)\}\right)
	\subseteq 
	D_n\left(\bigl[\lambda_1(n),\Lambda_1(n)
	\bigr]\times\cdots\times
	\bigl[\lambda_d(n),\Lambda_d(n)\bigr]\right)
	\end{align*}
Further, since $D_n$ is a diagonal matrix and due to monotonicity of intrinsic volumes, 
\begin{align*}
V_m\left(D_n\bigl( \mathrm{conv}\{\bS(0),\ldots,\bS(n)\}\bigr)\right)
&\le V_m\left( \bigl[\lambda_1(n)/n,\Lambda_1(n)/n\bigr] \times\cdots \times
\bigl[\lambda_d(n)/\sqrt{n},\Lambda_d(n)/\sqrt{n}\bigr]
 \right).
\end{align*}
According to \cite[Proposition 5.5]{Lotz-McCoy-Nourdin-Peccati-Tropp-2020},
the $m$-th intrinsic volume in the right hand side of the last inequality 
is the coefficient next to $w^m$ of the polynomial $w\mapsto \prod_{i=1}^d(1+ \upsilon_i(n) w )$,
where
\begin{align*}
\upsilon_{1}(n)=\Lambda_1(n)/n-\lambda_1(n)/n\qquad \mathrm{and}\qquad
\upsilon_i(n)=\Lambda_i(n)/\sqrt{n}-\lambda_i(n)/\sqrt{n},\quad i=2,\dots,d.
\end{align*}
Hence, 
\begin{align*}
&V_m\left(D_n\bigl(\mathrm{conv}\{\bS(0),\ldots,\bS(n)\}\bigr)\right)\\
&\le \sum_{i_1,\dots,i_m\in\{1,\dots,d\}}\prod_{j=1}^m\upsilon_{i_j}(n)
=\upsilon_{1}(n)\!\!\!\!\!
\sum_{i_1,\dots,i_{m-1}\in\{2,\dots,d\}}\prod_{j=1}^{m-1}\upsilon_{i_j} (n)\, +\!\! \sum_{i_1,\dots,i_m\in\{2,\dots,d\}}\prod_{j=1}^m \upsilon_{i_j}(n),
\end{align*}
where we use convention that the empty product is equal to 1.
Clearly 
\begin{align*}
\upsilon_1(n)\leq 2\max_{0\le k\le n}|\langle\bS(k),\bar e_1\rangle|/n\qquad \mathrm{and}\qquad \upsilon_i(n)\leq 2\max_{0\le k\le n}|\langle\bS(k),\bar e_i\rangle|/\sqrt{n}.
\end{align*}
This yields
\begin{align*}
V_m\left(D_n\bigl(\mathrm{conv}\{\bS(0),\ldots,\bS(n)\}\bigr)\right)
&\le 
\frac{2^m}{n^{(m+1)/2}}\max_{0\le k\le n}|\langle\bS(k),\bar e_1\rangle| \sum_{i_1,\dots,i_{m-1}\in\{1,\dots,d\}}\prod_{j=1}^{m-1}\max_{0\le k\le n}|\langle\bS(k),\bar e_{i_j}\rangle|\\
&\qquad+\frac{2^m}{n^{m/2}}\sum_{i_1,\dots,i_m\in\{2,\dots,d\}}\prod_{j=1}^m \max_{0\le k\le n}|\langle\bS(k),\bar e_{i_j}\rangle|.
\end{align*}
H\"{o}lder's inequality implies that for any $q>1$,
\begin{align*}
\mathbb{E}\Big[\prod_{j=1}^m \max_{0\le k\le n}|\langle\bS(k),\bar e_{i_j}\rangle|^q\Big]\le \prod_{j=1}^m \big(\mathbb{E}\bigl[\bigl(\max_{0\le k\le n}|\langle\bS(k),\bar e_{i_j}\rangle|\bigr)^{mq}\bigr]\big)^{1/m}.
\end{align*}
Without loss of generality we can assume $p\in(1,2)$.
According to \cite[Lemma A.1]{Wade-Xu_SPA}
it holds\footnote{We use the standard  $\mathcal{O}$-notation: for $f:\N\to\R$ and $g:\N\to(0,\infty)$ we write $f(n)=\mathcal{O}(g(n))$ if, and only if,  there is a constant $c>0$ such that $|f(n)|\le c g(n)$ for all $n\in\N$.} 
$$\left( \mathbb{E}\bigl[\bigl(\max_{0\le k\le n}|\langle\bS(k),\bar e_i\rangle|\bigr)^{mq}\bigr]\right)^{1/m}=\begin{cases}
\mathcal{O}(n^{2}), & m=1, q=2\ \text{and}\ i=1,\\
\mathcal{O}(n^{p}), & m\ge2,\ q=p\ \text{and}\ i=1,\\
\mathcal{O}(n), & m=1,\ q=2 \ \text{and}\ i\in\{2,\dots,d\},\\
\mathcal{O}(n^{p/2}), & m\ge2,\ q=p\ \text{and}\ i\in\{2,\dots,d\}.
\end{cases}
$$ 
From this we infer that 
$$\mathbb{E}\left[V_m\left(D_n\bigl(\mathrm{conv}\bigl(\{\bS(0),\ldots,\bS(n)\}\bigr)\bigr)\right)^q\right]=
\mathcal{O}(1).$$
It follows that the sequence $\{V_m(D_n(\mathrm{conv}\{\bS(0),\ldots,\bS(n)\}))\}_{n\ge0}$ is uniformly integrable. Thus, in view of Theorem \ref{CONV-DRIFT-2} and \cite[Lemma 3.11]{Kallenberg} we obtain \eqref{Expext_V_m_finite} and \eqref{mean_V_m_drift_conv}. Equation \eqref{m=d} follows from the fact that 
$$
V_d \big(D_n (\mathrm{conv}\{\bS(0),\ldots,\bS(n)\})\big)=
\det (D_n)\, V_d(\conv \SRange)=
\frac{V_d(n)}{n^{(d+1)/2}},
$$
and the proof is finished.
	\end{proof}

\begin{remark}
(i) We note that the fact that $\mathbb{E}[V_m(\mathrm{conv}\, \widetilde\bX[0,1])]<\infty$ follows also by \cite[Theorem 1.1]{Molchanov-Wespi}.\\
(ii)  The arguments used in Theorem \ref{m>1} do not apply in the case when $m=1$ and $\alpha<2$ as in such case we would require that $\mathbb{E}[\lVert \bY_1\rVert^{\alpha}]<\infty$. Then, by the same reasoning as in  Theorem \ref{m>1} we would obtain 
		$$\mathbb{E}\bigl[\bigl(\max_{0\le k\le n}|\langle\bS(k),\bar e_i\rangle|\bigr)^{\alpha}\bigr]=\begin{cases}
		\mathcal{O}(n^{\alpha}), &  i=1,\\
		\mathcal{O}(n), &  i\in\{2,\dots,d\}.
		\end{cases}
		$$ 
		This would imply 
		$$
		\mathbb{E}\left[V_1\left(D_n\bigl(\mathrm{conv}\{\bS(0),\ldots,\bS(n)\}\bigr)\right)^\alpha\right]=
		\mathcal{O}\bigr(n/b_n^\alpha\bigl).
		$$ 
		In order to infer uniform integrability we would have to assume that $\limsup_{n\to\infty}n/b_n^\alpha<\infty.$ 
		However, this is in contradiction with \cite[Theorem]{Tucker-1975}.
	\end{remark}

Before we continue our discussion on convergence of mean intrinsic volumes of the convex hull of the random walk, we find the precise value of the limit in \eqref{m=d} which results in deriving a formula for the expected volume of the convex hull spanned by a time-space Brownian motion run up to time one. For this reason, we extend the argument from \cite[Proposition 3.4]{Wade-Xu_SPA} where the planar case was handled. The main idea is to construct a specific Gaussian random walk such that its convex hull approximates the convex hull of the time-space Brownian motion. For such random walk we can then compute the expected volume of the convex hull through a combinatorial formula given in \cite{Vysotsky}.

\begin{theorem}\label{ST}
Let $\widetilde {\mathbf{B}}(t)=(t,\mathbf{B}(t))$, $t\ge0$, where $\{\mathbf{B}(t)\}_{t\ge0}$ is a standard Brownian motion in $\mathbb{R}^{d-1}$.
	It holds 
	$$
	\bbE\left[\mathrm{Vol}_d\bigl(\mathrm{conv}\, \widetilde {\mathbf{B}}[0,1]\bigr)\right]=\frac{2^{(d+1)/2}\pi^{(d-1)/2}}{(d+1)!}.
	$$
\end{theorem}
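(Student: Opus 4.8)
The plan is to realize $\widetilde{\mathbf{B}}$ as a scaling limit of an explicit Gaussian random walk with drift, to which both Theorem~\ref{m>1} and the combinatorial formula \eqref{Vysot} apply. I would take $\bY_i=(1,\mathbf{G}_i)$, $i\in\mathbb{N}$, with $\{\mathbf{G}_i\}_{i\in\mathbb{N}}$ i.i.d.\ standard Gaussian vectors in $\mathbb{R}^{d-1}$, and put $\bS(n)=\sum_{i=1}^n\bY_i=(n,\mathbf{G}_1+\cdots+\mathbf{G}_n)$. Then $\mu=\bbE[\bY_1]=e_1\ne0$, all moments of $\bY_1$ are finite, and $(\bS(n)-n\mu)/\sqrt n\to(0,\mathbf{B}(1))$ in distribution; hence in the notation of \eqref{DOM-ATT} and \eqref{process tilde X} one has $b_n=\sqrt n$, $\bX(t)=(0,\mathbf{B}(t))$ and $\widetilde{\bX}(t)=(\|\mu\|t,\bar{\bX}^\bot(t))=(t,\mathbf{B}(t))=\widetilde{\mathbf{B}}(t)$. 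Consequently Theorem~\ref{m>1} applies with $m=d$, and \eqref{m=d} yields $\bbE[\mathrm{Vol}_d(\mathrm{conv}\,\widetilde{\mathbf{B}}[0,1])]=\lim_{n\to\infty}n^{-(d+1)/2}\bbE[V_d(n)]$; the task reduces to computing this limit.

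Next I would apply \eqref{Vysot} with $m=d$ to this walk (it is in general position, since for distinct indices $\bS(j_1),\ldots,\bS(j_d)$ are a.s.\ linearly independent, which is all \eqref{Vysot} needs; if preferred, one first replaces the deterministic $1$ in the first coordinate by $1+\varepsilon Z_i$, applies the formula, and lets $\varepsilon\downarrow 0$). Using that for $d$ vectors in $\mathbb{R}^d$ the Gram determinant is the square of the ordinary determinant, and that $\bS^{(k)}(j_k)\stackrel{\mathcal{D}}{=}(j_k,\sqrt{j_k}\,\mathbf{g}_k)$ with $\{\mathbf{g}_k\}_{k=1}^d$ i.i.d.\ standard Gaussian in $\mathbb{R}^{d-1}$ (independent across copies), pulling $\sqrt{j_k}$ out of the $k$-th column gives
\[
\bbE[V_d(n)]=\frac1{d!}\sum_{\substack{j_1+\cdots+j_d\le n\\ j_1,\dots,j_d\in\mathbb{N}}}\frac1{\sqrt{j_1\cdots j_d}}\;\bbE\bigl|\det\widetilde M(j)\bigr|,
\]
where $\widetilde M(j)$ is the $d\times d$ matrix whose first row is $\mathbf{b}=(\sqrt{j_1},\ldots,\sqrt{j_d})$ and whose remaining $d-1$ rows form a $(d-1)\times d$ matrix $G$ with i.i.d.\ $N(0,1)$ entries. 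The crux is then the evaluation of $\bbE|\det\widetilde M(j)|$: cofactor expansion along the first row gives $\det\widetilde M(j)=\langle\mathbf{b},\mathbf{c}\rangle$ with $\mathbf{c}\in\mathbb{R}^d$ the generalized cross product of the rows of $G$, so $\|\mathbf{c}\|^2=\det(GG^{\top})$, and since the rows of $G$ are i.i.d.\ $N(0,I_d)$, rotational invariance shows that $\mathbf{c}/\|\mathbf{c}\|$ is uniform on $\mathbb{S}^{d-1}$ and independent of $\|\mathbf{c}\|$. Therefore
\[
\bbE\bigl|\det\widetilde M(j)\bigr|=\|\mathbf{b}\|\cdot\bbE\|\mathbf{c}\|\cdot\bbE\bigl|\langle e_1,U\rangle\bigr|,\qquad \|\mathbf{b}\|=\sqrt{j_1+\cdots+j_d},
\]
with $U$ uniform on $\mathbb{S}^{d-1}$; here $\det(GG^{\top})\stackrel{\mathcal{D}}{=}\prod_{k=2}^d\chi^2_k$ with independent factors, so telescoping gives $\bbE\|\mathbf{c}\|=2^{(d-1)/2}\Gamma\!\big(\tfrac{d+1}{2}\big)$, while $\bbE|\langle e_1,U\rangle|=\Gamma(d/2)/\big(\sqrt\pi\,\Gamma(\tfrac{d+1}{2})\big)$. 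Combining,
\[
\bbE[V_d(n)]=\frac{2^{(d-1)/2}\Gamma(d/2)}{d!\,\sqrt\pi}\sum_{\substack{j_1+\cdots+j_d\le n\\ j_1,\dots,j_d\in\mathbb{N}}}\frac{\sqrt{j_1+\cdots+j_d}}{\sqrt{j_1\cdots j_d}}.
\]

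Finally I would let $n\to\infty$: dividing by $n^{(d+1)/2}$ turns the sum into a Riemann sum for $I_d:=\int_{\{t_k>0,\ \sum_k t_k\le1\}}\sqrt{t_1+\cdots+t_d}\,(t_1\cdots t_d)^{-1/2}\,dt$, the integrand being dominated by the Dirichlet-integrable function $(t_1\cdots t_d)^{-1/2}$ and the strata in which some $j_k$ stays bounded contributing only $O(n^{d/2})=o(n^{(d+1)/2})$; hence $n^{-(d+1)/2}\bbE[V_d(n)]\to \frac{2^{(d-1)/2}\Gamma(d/2)}{d!\,\sqrt\pi}\,I_d$. To evaluate $I_d$, substitute $t_k=su_k$ with $s=\sum_k t_k\in(0,1]$ and $u$ on the unit simplex (Jacobian $s^{d-1}$): the $s$-integral is $\int_0^1 s^{(d-1)/2}\,ds=2/(d+1)$ and the $u$-integral is the Dirichlet integral $\pi^{d/2}/\Gamma(d/2)$, so $I_d=\frac2{d+1}\cdot\frac{\pi^{d/2}}{\Gamma(d/2)}$; multiplying out, $\Gamma(d/2)$ cancels and one obtains $\bbE[\mathrm{Vol}_d(\mathrm{conv}\,\widetilde{\mathbf{B}}[0,1])]=2^{(d+1)/2}\pi^{(d-1)/2}/(d+1)!$. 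I expect the main obstacle to be the bookkeeping in this last passage (the boundary singularities of the integrand and the finitely many low-index strata), together with verifying the hypotheses of Theorem~\ref{m>1} and the legitimacy of \eqref{Vysot} for a walk whose increments are supported on the hyperplane $\{x_1=1\}$; by contrast, the probabilistic identity $\det\widetilde M(j)=\langle\mathbf{b},\mathbf{c}\rangle$ with $\mathbf{c}/\|\mathbf{c}\|$ uniform on the sphere and independent of $\|\mathbf{c}\|$ is the conceptual heart but is short.
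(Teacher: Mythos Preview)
Your proposal is correct and follows essentially the same route as the paper: the same Gaussian drift walk $\bY_i=(1,\mathbf{G}_i)$, the same reduction via \eqref{m=d} and the Vysotsky formula to the sum $\sum (j_1\cdots j_d)^{-1/2}\sqrt{j_1+\cdots+j_d}$, and the same Dirichlet-integral evaluation of its asymptotics. The only cosmetic differences are that the paper computes $\bbE|\det\widetilde M(j)|$ by rotating the unit vector $\mathbf{b}/\|\mathbf{b}\|$ to $e_1$ and invoking Goodman's result $|\det(\mathbf{W}_1,\ldots,\mathbf{W}_{d-1})|\stackrel{\mathcal{D}}{=}\sqrt{\chi^2(1)\cdots\chi^2(d-1)}$ (equivalent to your cross-product factorisation), and handles the sum via Stolz--Ces\`aro before passing to the Dirichlet integral rather than a direct Riemann-sum argument; your concerns about general position and the singular integrand are legitimate bookkeeping but not obstacles.
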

\begin{proof}
	 Let $\{\mathbf{Z}_i\}_{i\in\N}$ be a sequence of independent and identically distributed random vectors with law $\mathrm{N}(0,\mathrm{I}_{d-1})$. 
	Here, $\mathrm{I}_{d-1}$ stands for the $(d-1)\times(d-1)$ identity matrix.	 
	 Let $\widetilde {\mathbf{Z}}_i=(1,\mathbf{Z}_i)$ and denote by  $\widetilde {\bS}(n)=\sum_{i=1}^n\widetilde{\mathbf{Z}}_i$ the corresponding random walk such that $ \widetilde {\bS}(0)=0.$ 
	Evidently, $\mathbb{E}[\widetilde{\mathbf{Z}}_1]=e_1$, where $\{e_1,\ldots ,e_d\}
	$ is the standard orthonormal basis of $\mathbb{R}^d$.
	According to \cite[Theorem 6.13]{McRedmond_survey} the following convergence holds 
	$$
	\frac{\mathrm{Vol}_d\bigl(\mathrm{conv}\,\{\widetilde{\bS}(0),\dots,\widetilde{\bS}(n)\}\bigr)}{n^{(d+1)/2}}\xrightarrow[n\nearrow\infty]{\mathcal{D}} \mathrm{Vol}_d\bigl(\mathrm{conv}\,\widetilde {\mathbf{B}}[0,1]\bigr).
	$$ 
 We can apply the same reasoning as in the proof of Theorem \ref{m>1} to show that
	$$\mathbb{E}\left[\mathrm{Vol}_d\bigl(\mathrm{conv}\bigl(\{\widetilde{\bS}(0),\ldots,\widetilde{\bS}(n)\}\bigr)\bigr)^2\right]=
	\mathcal{O}(n^{d+1}).
	$$ 
This implies uniform integrability and 
	by \cite[Lemma 3.11]{Kallenberg} we then infer that
	$$
	\bbE\left[\mathrm{Vol}_d\bigl(\mathrm{conv}\,\widetilde{\mathbf{B}}[0,1]\bigr)\right]=\lim_{n\to\infty}n^{-(d+1)/2}
	\bbE\left[\mathrm{Vol}_d\bigl(\mathrm{conv}\,\{\widetilde{\bS}(0),\dots,\widetilde{\bS}(n)\}\bigr)\right].
	$$
	We are left to compute the limit in the last expression.	
	Let $\{\widetilde{\bS}^{(k)}(n)\}_{n\ge0}$, for $k=1,\dots,d$, be independent copies of 
	$\{\widetilde{\bS}(n)\}_{n\ge0}$.
	According to \cite[Corrolary 2]{Vysotsky} it holds	
	$$
	\bbE\left[\mathrm{Vol}_d\bigl(\mathrm{conv}\,\{\widetilde{\bS}(0),\dots,\widetilde{\bS}(n)\}\bigr)\right]
	=
	\frac{1}{d!}\sum_{\substack{j_1+\cdots+j_d\le n \\ j_1,\dots,j_d\in\N}}\frac{\bbE\left[\left|\det \big(\widetilde{\bS}^{(1)}(j_1)\cdots\widetilde{\bS}^{(d)}(j_d)\big)\right|\right]}{j_1\cdots j_d}.
	$$	
	For fixed $j_1,\dots,j_d\in\N$ we have
	$$
	\det\bigl(\widetilde{\bS}^{(1)}(j_1)\cdots\widetilde{\bS}^{(d)}(j_d)\bigr)=
	\det 
	\begin{pmatrix}
	j_1 & \cdots & j_d \\ 
	\sum_{i=1}^{j_1} \mathbf{Z}^{(1)}_i & \cdots & \sum_{i=1}^{j_d} \mathbf{Z}^{(d)}_i 
	\end{pmatrix},
	$$
	where $\{\mathbf{Z}_i^{(k)}\}_{i\in\N}$, for $k=1,\ldots ,d$, are independent copies of 
	$\{\mathbf{Z}_i\}_{i\in\N}$.
	Clearly, $\sum_{i=1}^{j_k} \mathbf{Z}^{(k)}_i$ is of the form $\sqrt{j_k}\mathbf{W}_k$, where $\mathbf{W}_k=(\mathbf{W}_k^{(1)},\dots,\mathbf{W}_k^{(d-1)})$ are independent random vectors with law $\mathrm{N}(0,\mathrm{I}_{d-1})$.
	Hence, 
	\begin{align*}
	\det\bigl(\widetilde{\bS}^{(1)}(j_1)\cdots\widetilde{\bS}^{(d)}(j_d)\bigr)&
	=\sqrt{j_1\cdots j_d}
	\det 
	\begin{pmatrix}
	\sqrt{j_1} & \cdots & \sqrt{j_d} \\ 
	\mathbf{W}_1 & \cdots & \mathbf{W}_d
	\end{pmatrix}\\&=\sqrt{j_1\cdots j_d}\sqrt{j_1+\cdots+j_d}
	\det 
	\begin{pmatrix}
	\frac{\sqrt{j_1}}{\sqrt{j_1+\cdots+j_d}} &  \mathbf{W}_1 \\ 
	\vdots & \vdots 	\\
	\frac{\sqrt{j_d}}{\sqrt{j_1+\cdots+j_d}}  & \mathbf{W}_d
	\end{pmatrix}.
	\end{align*}
	We note that the first column in the last determinant is a unit vector which we denote by $v$. Let $O$ be an  orthogonal matrix such that $Ov=e_1$. Further, the vectors (forming the remaining columns of the last determinant) $(\mathbf{W}_1^{(k)},\dots,\mathbf{W}_d^{(k)})$, $k=1,\dots,d-1$, are independent with law  $\mathrm{N}(0,\mathrm{I}_{d})$. It follows that the 
	vectors $\widetilde{\mathbf{W}}_k=O(\mathbf{W}_1^{(k)},\dots,\mathbf{W}_d^{(k)})^{T}$, $k=1,\dots,d-1$, are also independent and have  law $\mathrm{N}(0,\mathrm{I}_{d})$. Consequently, 
	\begin{align*}
		\left|\det\bigl(\widetilde{\bS}^{(1)}(j_1)\cdots\widetilde{\bS}^{(d)}(j_d)\bigr)\right|
		&=
		\sqrt{j_1\cdots j_d}\sqrt{j_1+\cdots+j_d}\left|\det\bigl(
	e_1\ \widetilde{\mathbf{W}}_1\cdots\widetilde{\mathbf{W}}_{d-1}
	\bigr)\right|\\
	&\stackrel{\mathcal{D}}{=}
	\sqrt{j_1\cdots j_d}\sqrt{j_1+\cdots+j_d}\left|\det\bigl(
	\mathbf{W}_1\cdots \mathbf{W}_{d-1}\bigr)\right|.
	\end{align*}
	The distribution of the last determinant was found in \cite{Goodman} and it is given by
	 $$
	 \left|\det\bigl(
	\mathbf{W}_1\cdots \mathbf{W}_{d-1}\bigr)\right| \stackrel{\mathcal{D}}{=}\sqrt{\chi^2(1)\cdots\chi^2(d-1)},   	$$ 
	where $\chi^2(k)$, for $k=1,\ldots ,d-1$, are independent  chi-squared random variables with $k$ 
	degrees of freedom, respectively. 
	Since, 
	$$
	\bbE\left[\sqrt{\chi^2(k)}\right]=\sqrt{2}\frac{\Gamma((k+1)/2)}{\Gamma(k/2)},
	$$ 
	we obtain 
	$$
	\bbE\left[\left|\det\bigl(\widetilde{\bS}^{(1)}(j_1)\cdots\widetilde{\bS}^{(d)}(j_d)\bigr)\right|\right]=\sqrt{j_1\cdots j_d}\sqrt{j_1+\cdots+j_d}\, \frac{2^{(d-1)/2}}{\sqrt{\pi}}\Gamma(d/2)
	$$
	and whence 
	$$
	\bbE\left[\mathrm{Vol}_d\bigl(\mathrm{conv}\,\{\widetilde{\bS}(0),\dots,\widetilde{\bS}(n)\}\bigr)\right]=\frac{2^{(d-1)/2}\Gamma(d/2)}{d!\sqrt{\pi}}\sum_{\substack{j_1+\cdots+j_d\le n \\ j_1,\dots,j_d\in\N}}\frac{\sqrt{j_1+\cdots+j_d}}{\sqrt{j_1\cdots j_d}}.
	$$	
	We finally claim that 
	$$
	\lim_{n\to\infty}\frac{1}{n^{(d+1)/2}}\sum_{\substack{j_1+\cdots+j_d\le n \\ j_1,\dots,j_d\in\N}}\frac{\sqrt{j_1+\cdots+j_d}}{\sqrt{j_1\cdots j_d}}=\frac{2\pi^{d/2}}{(d+1)\Gamma(d/2)}.
	$$ 
	We consider two sequences 
	$$
	a_n=\sum_{\substack{j_1+\cdots+j_d\le n \\ j_1,\dots,j_d\in\N}}\frac{\sqrt{j_1+\cdots+j_d}}{\sqrt{j_1\cdots j_d}}\qquad\text{and}\qquad f_n=n^{(d+1)/2}.
	$$
	 Since $\{f_n\}_{n\in\N}$ is strictly monotone and divergent, according to Stolz-Ces\`{a}ro theorem \cite[Theorem 1.22]{Marian} it suffices to show that $$\lim_{n\to\infty}\frac{a_{n+1}-a_n}{f_{n+1}-f_n}=\frac{2\pi^{d/2}}{(d+1)\Gamma(d/2)}.$$ 
	 Since $$\frac{(n+1)^u-n^u}{un^{u-1}}=1$$ for all $u>0$, we have 
	 $$\lim_{n\to\infty}\frac{f_{n+1}-f_n}{n^{(d-1)/2}}=\frac{d+1}{2}.$$ 
	 Hence, the claim will follow if we find the limit
	 \begin{align*}
	 \lim_{n\to\infty}\frac{a_{n+1}-a_n}{n^{(d-1)/2}}&=\lim_{n\to\infty}\frac{1}{n^{(d-1)/2}}\sum_{\substack{j_1+\cdots+j_d= n+1 \\ j_1,\dots,j_d\in\N}}\frac{\sqrt{j_1+\cdots+j_d}}{\sqrt{j_1\cdots j_d}}\\&=\lim_{n\to\infty}\frac{1}{n^{(d-2)/2}}\sum_{\substack{j_1+\cdots+j_d= n+1 \\ j_1,\dots,j_d\in\N}}(j_1\cdots j_d)^{-1/2}.
	 \end{align*}
	 The last expression is equal to the integral sum converging to the Dirichlet integral given in terms of the multinomial Beta function, see \cite[page 11]{Shui}. We have 
	 \begin{align*}
	\lim_{n\to\infty}\frac{1}{n^{(d-2)/2}}\sum_{\substack{j_1+\cdots+j_d= n+1 \\ j_1,\dots,j_d\in\N}}(j_1\cdots j_d)^{-1/2}&=\lim_{n\to\infty}\sum_{\substack{j_1+\cdots+j_d= n+1 \\ j_1,\dots,j_d\in\N}}\left(\frac{j_1\cdots j_d}{n^d}\right)^{-1/2} \frac{1}{n^{d-1}}\\
	&=\int_{\substack{u_1+\cdots+u_d= 1 \\ u_1,\dots,u_d\ge0}}(u_1\cdots u_d)^{-1/2}\mathrm{d}u_1\dots \mathrm{d}u_d\\
	&=\frac{\prod_{i=1}^d\Gamma(1/2)}{\Gamma(\sum_{i=1}^d1/2)}=\frac{\pi^{d/2}}{\Gamma(d/2)}
	\end{align*} 
	and the proof is finished.
\end{proof}

We next show how Theorem \ref{ST} provides the precise form of the limit in \eqref{m=d}. Recall Remark \ref{REM}. We have $$\bigl\{\widetilde{\bX}(t)\bigr\}_{t\ge0}\stackrel{\mathcal{D}}{=}\big\{\Sigma_{\mu}^{1/2}\widetilde{\mathbf{B}}(t)\big\}_{t\ge0} 
\qquad \text{and}\qquad 
V_d\bigl(\mathrm{conv}\, \widetilde\bX[0,1]\bigr)
=
\lVert\mu\rVert\sqrt{\det (\Sigma_{\bot})}\, V_d\bigl(\mathrm{conv}\, \widetilde{\mathbf{B}}[0,1]\bigr),
$$
where $\Sigma_{\bot}=\bbE[(\bar{\bY}^\bot_1)(\bar{\bY}^\bot_1)^t]$ and $$\Sigma_{\mu}=\begin{pmatrix}
\lVert\mu\rVert^2 & 0 \\
0 & \Sigma_{\bot}
\end{pmatrix}.$$

In view od Theorems \ref{m>1} and \ref{ST} we obtain the following result.
\begin{corollary}
	Under the assumptions of Theorem \ref{m>1},
$$
\lim_{n\to \infty}
\mathbb{E}\left[V_m\left(D_n\bigl(\mathrm{conv}\{\bS(0),\ldots,\bS(n)\}\bigr)\right)\right]
=
\mathbb{E}\bigl[V_m\bigl(\Sigma_{\mu}^{1/2}\mathrm{conv}\, \widetilde{\mathbf{B}}[0,1]\bigr)\bigr].
$$	
		and  $$
	\lim_{n\to\infty}
	\frac{\mathbb{E}\bigl[V_d(n)\bigr]}{n^{(d+1)/2}}
	=
	\frac{2^{(d+1)/2}\pi^{(d-1)/2}}{(d+1)!}\lVert\mu\rVert \sqrt{\det (\Sigma_{\bot})}.
	$$
\end{corollary}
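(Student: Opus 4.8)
The plan is to deduce both limits by combining Theorem~\ref{m>1} with the distributional identities recorded in Remark~\ref{REM} and the explicit value of $\bbE[\mathrm{Vol}_d(\mathrm{conv}\,\widetilde{\mathbf{B}}[0,1])]$ from Theorem~\ref{ST}; no new analytic input is needed.

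First I would note that the hypothesis $\bbE[\lVert\bY_1\rVert^{2\vee mp}]<\infty$ in particular gives $\bbE[\lVert\bY_1\rVert^2]<\infty$, so Remark~\ref{REM} applies: $\{\widetilde\bX(t)\}_{t\ge0}\stackrel{\mathcal{D}}{=}\{\Sigma_\mu^{1/2}\widetilde{\mathbf{B}}(t)\}_{t\ge0}$, where $\widetilde{\mathbf{B}}(t)=(t,\mathbf{B}(t))$ with $\mathbf{B}$ a standard $(d-1)$-dimensional Brownian motion and $\Sigma_\mu$ the block-diagonal matrix with blocks $\lVert\mu\rVert^2$ and $\Sigma_\bot$ in the basis $\{\bar e_1,\dots,\bar e_d\}$. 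Pushing this equality in law through the map $H$ of Lemma~\ref{H-ctns} and using $\mathrm{conv}(M\cdot A)=M\cdot\mathrm{conv}(A)$ and $\cl(M\cdot A)=M\cdot\cl(A)$ gives the equality in distribution of random convex bodies $\mathrm{conv}\,\widetilde\bX[0,1]\stackrel{\mathcal{D}}{=}\Sigma_\mu^{1/2}\,\mathrm{conv}\,\widetilde{\mathbf{B}}[0,1]$. Since each $V_m$ is continuous on $(\mathcal{K}^d,\mathrm{dist}_\mathcal{H})$ and $\bbE[V_m(\mathrm{conv}\,\widetilde\bX[0,1])]<\infty$ by \eqref{Expext_V_m_finite}, I may take expectations to obtain $\bbE[V_m(\mathrm{conv}\,\widetilde\bX[0,1])]=\bbE[V_m(\Sigma_\mu^{1/2}\mathrm{conv}\,\widetilde{\mathbf{B}}[0,1])]$, and combining this with \eqref{mean_V_m_drift_conv} yields the first displayed limit.

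For the second limit I would specialise to $m=d$: here $V_d=\mathrm{Vol}_d$ scales under the linear map $\Sigma_\mu^{1/2}$ by the absolute value of its determinant, so $V_d(\Sigma_\mu^{1/2}\mathrm{conv}\,\widetilde{\mathbf{B}}[0,1])=\sqrt{\det\Sigma_\mu}\;\mathrm{Vol}_d(\mathrm{conv}\,\widetilde{\mathbf{B}}[0,1])$, and $\det\Sigma_\mu=\lVert\mu\rVert^2\det\Sigma_\bot$ by the block structure, whence $\bbE[V_d(\mathrm{conv}\,\widetilde\bX[0,1])]=\lVert\mu\rVert\sqrt{\det\Sigma_\bot}\,\bbE[\mathrm{Vol}_d(\mathrm{conv}\,\widetilde{\mathbf{B}}[0,1])]$. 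Inserting the value $\bbE[\mathrm{Vol}_d(\mathrm{conv}\,\widetilde{\mathbf{B}}[0,1])]=2^{(d+1)/2}\pi^{(d-1)/2}/(d+1)!$ from Theorem~\ref{ST} and using \eqref{m=d} finishes the proof. The whole argument is a bookkeeping combination of already-established facts, so I expect no real obstacle; the only points deserving a routine word are the legitimacy of passing the process-level identity through $H$ and then through $V_m$ — licensed by the continuity statements collected before Section~\ref{sec:Hulls} and the finiteness \eqref{Expext_V_m_finite} — and the harmless invariance of $V_m$ and $\mathrm{Vol}_d$ under the orthogonal change of basis $\{e_i\}\to\{\bar e_i\}$.
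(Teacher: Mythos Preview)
Your proof is correct and follows essentially the same route as the paper: the corollary is stated there without a separate proof, being recorded ``in view of Theorems~\ref{m>1} and~\ref{ST}'' together with the identities from Remark~\ref{REM} that you invoke. Your explicit justification of how the process-level equality in law passes through $H$ and $V_m$ is a welcome elaboration of what the paper leaves implicit.
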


We finally aim to derive an analogue of formula \eqref{M=1} or \eqref{m=d} for mean intrinsic volumes with $m\in\{2,\dots,d-1\}$.
Before we formulate and prove the result, we need some preparation. 
We assume that $\mu \neq 0$ and
$\Prob(\bY_1\in \mathbb{h})=0$ for any affine hyperplane $\mathbb{h}\subset\R^d$. Moreover, let $\mathbb{E}[\lVert \bY_1\rVert^{mp}]<\infty$ for some $p>1$ (recall that this implies that $\{\bX(t)\}_{t\ge0}$ is a Brownian motion and $b_n=\sqrt{n}$). 
We fix $m\in\{2,\dots,d-1\}$ and let 
$P_m$ be the orthogonal projector from $\R^d$ onto $\R^m$. For $Q\in\mathrm{SO}(d)$ (the space of all orthogonal matrices with determinant one) it holds $$
P_mQ\, \mathrm{conv}\{\bS(0),\ldots,\bS(n)\}=\mathrm{conv}\{P_mQ\bS(0),\ldots,P_mQ\bS(n)\}.
$$ 
Clearly, $P_mQ\bS(n)=\sum_{i=1}^nP_mQ\bY_i$ is a $\R^m$-valued random walk.
Also, $\mathbb{E}[P_mQ\bY_1]=P_mQ\mu$ and 
$$
\frac{P_mQ\bS(n)-nP_mQ\mu}{\sqrt{n}}\xrightarrow[n\nearrow\infty]{\mathcal{D}} P_mQ\bX(1).
$$ 
The process $\{P_mQ\bX(t)\}_{t\ge0}$ is a Brownian motion in $\R^m$.
Thus, analogously as in  \eqref{FCLT}, 
$$
\left\{\frac{P_mQ\bS(\lfloor nt\rfloor)-\lfloor nt\rfloor P_mQ\mu} {\sqrt{n}}\right\}_{t\ge0}\xrightarrow[n\nearrow\infty]{\mathcal{J}^{d}_1}\{P_mQ\bX(t)\}_{t\ge0}.
$$
If $P_mQ\mu=0$ then, by Theorem \ref{Vm},
\begin{equation}\label{mmu><0}
\lim_{n\to \infty} 
\frac{\bbE\bigl[\mathrm{Vol}_m\bigl(P_mQ\, \mathrm{conv}\{\bS(0),\ldots,\bS(n)\}\bigr)\bigr]}{n^{(m+1)/2}}
=0.	
\end{equation}
We next consider the case 
when $P_mQ\mu\neq0$.
Let	$\{ e^{Q}_1,\dots, e^{Q}_m\}$ be an  orthonormal basis of $\R^m$ such that $ e^{Q}_1=P_mQ\mu/\lVert P_m Q\mu\rVert$. Similarly as before, 
let $\phi_Q^\bot:\R^m\to\R^{m-1}$ be a linear mapping given by  $\phi^\bot_Q(x)=(\langle x, e^Q_2\rangle,\dots, \langle x, e^Q_m\rangle)$.
 Clearly, $\{\phi_Q^\bot(P_mQ\bS(n))\}_{n\ge0}$ is a  zero-drift random walk in $\R^{m-1}$ satisfying  
 $$
 \frac{\phi_Q^\bot(P_mQ\bS(n))}{\sqrt{n}}\xrightarrow[n\nearrow\infty]{\mathcal{D}} \phi_Q^\bot\bigl(P_mQ\bX(1)\bigr).$$ 
 The process
$\{\phi_Q^\bot(P_mQ\bX(t))\}_{t\ge0}$ is a Brownian motion in $\R^{m-1}$. 
We define the following time-space Brownian motion in $\R^m$
\begin{equation}\label{X_Q_space-ime}
\widetilde{\bX}_{Q,m}(t)=\bigl(\lVert P_mQ\mu\rVert t,\phi_Q^\bot(P_mQ\bX(t))\bigr),\qquad t\geq 0.
\end{equation}
By Theorem \ref{m>1}, 
\begin{equation}\label{mmu><00}
\lim_{n\to \infty}
\frac{\bbE\bigl[\mathrm{Vol}_m\bigl(P_mQ\, \mathrm{conv}\{\bS(0),\ldots,\bS(n)\}\bigr)\bigr]}{n^{(m+1)/2}}
=
\mathbb{E}\bigl[\mathrm{Vol}_m\bigl(\mathrm{conv}\, \widetilde\bX_{Q,m}[0,1]\bigr)\bigr].
\end{equation}
We finally claim that
\begin{equation}\label{X}
\sup_{Q\in\mathrm{SO}(d)}\mathbb{E}\bigl[\mathrm{Vol}_m\bigl(\mathrm{conv}\, \widetilde\bX_{Q,m}[0,1]\bigr)\bigr]
=
\sup_{ Q\in\mathrm{SO}(d):\, P_mQ\mu\neq0}\mathbb{E}\bigl[\mathrm{Vol}_m\bigl(\mathrm{conv}\,\widetilde\bX_{Q,m}[0,1]\bigr)\bigr]<\infty.
\end{equation}
Indeed,
we have
\begin{align*}
&\mathrm{conv}\,\widetilde\bX_{Q,m}[0,1]\\&\subseteq \bigl[0,\lVert P_m Q  \mu\rVert\bigr]\times\bigl[\inf_{0\le t\le 1}\langle P_mQ\bX(t), e^Q_2\rangle,\sup_{0\le t\le 1}\langle P_mQ\bX(t),\ e^Q_2\rangle\bigr]\\&\qquad \times\cdots\times\bigl[\inf_{0\le t\le 1}\langle P_mQ\bX(t), e^Q_m\rangle,\sup_{0\le t\le 1}\langle P_mQ\bX(t),\ e^Q_m\rangle\bigr]\\
&\subseteq
\bigl[0,\lVert   \mu\rVert\bigr]\times\bigl[-\sup_{0\le t\le 1}\lVert\bX(t)\rVert,\sup_{0\le t\le 1}\lVert\bX(t)\rVert\bigr] \times\cdots\times\bigl[-\sup_{0\le t\le 1}\lVert\bX(t)\rVert,\sup_{0\le t\le 1}\lVert\bX(t)\rVert\bigr]\bigr].\end{align*}
This yields
\begin{align*}
\bbE\bigl[\mathrm{Vol}_m\big(\mathrm{conv}\, \widetilde\bX_{Q,m}[0,1]\big)\bigr]\le 2^{m-1}\lVert\mu\rVert\bbE\bigl[\sup_{0\le t\le 1}\lVert\bX(t)\rVert^{m-1}\bigr]
\end{align*}
and by \eqref{eq:p} we obtain
\begin{align*}
\bbE\bigl[\mathrm{Vol}_m\big(\mathrm{conv}\, \widetilde\bX_{Q,m}[0,1]\big)\bigr]
\le 2^{m-1}\lVert\mu\rVert\begin{cases}
\sum_{k=1}^d\bbE\bigl[\sup_{0\le t\le 1}\rvert X^{(k)}(t)\rvert^{m-1}\bigr], & m=2,\\
d^{(m-3)/2}\sum_{k=1}^d\bbE\bigl[\sup_{0\le t\le 1}\rvert X^{(k)}(t)\rvert^{m-1}\bigr], & m\ge3.
\end{cases}
\end{align*}
Since $\{\bX(t)\}_{t\ge0}$ is a zero-drift Brownian motion,
the claim follows by Doob's maximal inequality. 

We are ready to state and prove the theorem.

\begin{theorem}\label{Em}
Assume that $\mu\neq 0$ and $\mathbb{E}[\lVert \bY_1\rVert^{ mp}]<\infty$ for some $p>1$. Further, suppose that $\Prob(\bY_1\in \mathbb{h})=0$ for any affine hyperplane $\mathbb{h}\subset\R^d$. Then, for $m\in\{2,\dots,d-1\}$, 
$$
\lim_{n\to \infty} \frac{\mathbb{E}\bigl[V_m(n)\bigr]}{n^{(m+1)/2}}
=
\binom{d}{m}\frac{\kappa_d}{\kappa_m\kappa_{d-m}}\int_{\mathrm{SO}(d)}\mathbb{E}\bigl[\mathrm{Vol}_m\bigl(\mathrm{conv}\, \widetilde\bX_{Q,m}[0,1]\bigr)\bigr]\nu(\mathrm{d}Q),
$$ 
where $\{\widetilde\bX_{Q,m}(t)\}_{t\ge0}$ is  defied in \eqref{X_Q_space-ime} and  
$\nu(\mathrm{d}Q)$ denotes  the  (probability) Haar measure on the special orthogonal group $\mathrm{SO}(d)$.
\end{theorem}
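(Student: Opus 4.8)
The plan is to obtain the identity by integrating the projection limits \eqref{mmu><0}--\eqref{mmu><00} over $\mathrm{SO}(d)$ with the help of the Cauchy--Kubota formula. Recall that for every $K\in\mathcal{K}^d$ and every $m\in\{1,\dots,d\}$ one has
\begin{equation*}
V_m(K)=\binom{d}{m}\frac{\kappa_d}{\kappa_m\kappa_{d-m}}\int_{\mathrm{SO}(d)}\mathrm{Vol}_m\bigl(P_mQK\bigr)\,\nu(\mathrm{d}Q),
\end{equation*}
which is Kubota's formula on the Grassmannian $G(d,m)$ of $m$-dimensional linear subspaces of $\R^d$ (see \cite[Subsection~5.3]{Schneider-book}), re-expressed over $\mathrm{SO}(d)$ via the fact that the invariant probability measure on $G(d,m)$ is the image of $\nu$ under $Q\mapsto Q^{t}\R^m$ and that the top intrinsic volume of a body in $\R^m$ equals its $m$-dimensional Lebesgue measure. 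First I would apply this with $K=\mathrm{conv}\{\bS(0),\ldots,\bS(n)\}$; since $(\omega,Q)\mapsto\mathrm{Vol}_m\bigl(P_mQ\,\mathrm{conv}\{\bS(0),\ldots,\bS(n)\}\bigr)$ is jointly measurable and nonnegative, Tonelli's theorem gives
\begin{equation*}
\frac{\mathbb{E}[V_m(n)]}{n^{(m+1)/2}}=\binom{d}{m}\frac{\kappa_d}{\kappa_m\kappa_{d-m}}\int_{\mathrm{SO}(d)}\frac{\mathbb{E}\bigl[\mathrm{Vol}_m\bigl(P_mQ\,\mathrm{conv}\{\bS(0),\ldots,\bS(n)\}\bigr)\bigr]}{n^{(m+1)/2}}\,\nu(\mathrm{d}Q).
\end{equation*}

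Second, I would identify the pointwise limit of the integrand. For fixed $Q$ with $P_mQ\mu=0$ it tends to $0$ by \eqref{mmu><0}, while for $P_mQ\mu\neq0$ it tends to $\mathbb{E}\bigl[\mathrm{Vol}_m\bigl(\mathrm{conv}\,\widetilde\bX_{Q,m}[0,1]\bigr)\bigr]$ by \eqref{mmu><00}. The exceptional set $\{Q\in\mathrm{SO}(d):P_mQ\mu=0\}$ is $\nu$-null: under $\nu$ the unit vector $Q\mu/\lVert\mu\rVert$ is uniformly distributed on $\mathbb{S}^{d-1}$, and its intersection with $\ker P_m$ is a sphere of dimension $d-m-1<d-1$ (recall $m\ge2$). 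Hence the integrand converges $\nu$-a.e.\ to the function $Q\mapsto\mathbb{E}\bigl[\mathrm{Vol}_m\bigl(\mathrm{conv}\,\widetilde\bX_{Q,m}[0,1]\bigr)\bigr]$, which is bounded, and hence $\nu$-integrable, by \eqref{X}; its measurability follows from continuity of $\mathrm{Vol}_m$ together with the fact that the volume in \eqref{X_Q_space-ime} does not depend on the non-canonical choice of $e^Q_2,\dots,e^Q_m$, so the value on the null set plays no role. (If one wishes, this limiting function can even be evaluated explicitly via Theorem \ref{ST} applied in dimension $m$, which also makes its continuity in $Q$ off the null set transparent.)

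Third, to interchange $\lim_{n\to\infty}$ and $\int_{\mathrm{SO}(d)}(\,\cdot\,)\,\nu(\mathrm{d}Q)$ I would apply dominated convergence, which reduces to bounding the integrand uniformly in $n$ and $Q$. Fixing $Q$ and an orthonormal basis $\{e^Q_1,\dots,e^Q_m\}$ of $\R^m$ (taking $e^Q_1=P_mQ\mu/\lVert P_mQ\mu\rVert$ when $P_mQ\mu\neq0$, arbitrary otherwise), enclosing $P_mQ\,\mathrm{conv}\{\bS(0),\ldots,\bS(n)\}=\mathrm{conv}\{P_mQ\bS(0),\ldots,P_mQ\bS(n)\}$ in the corresponding coordinate box and using monotonicity of $\mathrm{Vol}_m$ gives
\begin{equation*}
\mathrm{Vol}_m\bigl(P_mQ\,\mathrm{conv}\{\bS(0),\ldots,\bS(n)\}\bigr)\le 2^m\prod_{k=1}^{m}\max_{0\le j\le n}\bigl|\langle P_mQ\bS(j),e^Q_k\rangle\bigr|.
\end{equation*}
Here $\{\langle P_mQ\bS(j),e^Q_k\rangle\}_{j\ge0}$ is a one-dimensional random walk with drift $\lVert P_mQ\mu\rVert\le\lVert\mu\rVert$ for $k=1$ and zero drift for $k\ge2$, and its step obeys $\mathbb{E}\bigl[|\langle P_mQ\bY_1,e^Q_k\rangle|^{mp}\bigr]\le\mathbb{E}[\lVert\bY_1\rVert^{mp}]$ for all $Q$ and $k$, because $P_mQ$ does not increase the Euclidean norm. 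Consequently the very estimates used in the proof of Theorem \ref{m>1}, applied to the $m$-dimensional walk $\{P_mQ\bS(j)\}_{j\ge0}$ (H\"older's inequality together with \cite[Lemma A.1]{Wade-Xu_SPA}), bound $\mathbb{E}\bigl[\mathrm{Vol}_m\bigl(P_mQ\,\mathrm{conv}\{\bS(0),\ldots,\bS(n)\}\bigr)\bigr]$ by $Cn^{(m+1)/2}$ with $C$ depending only on $\mathbb{E}[\lVert\bY_1\rVert^{mp}]$, $\lVert\mu\rVert$, $m$ and $d$ --- and in particular \emph{not} on $Q$. This supplies the uniform domination, and dominated convergence yields
\begin{equation*}
\lim_{n\to\infty}\frac{\mathbb{E}[V_m(n)]}{n^{(m+1)/2}}=\binom{d}{m}\frac{\kappa_d}{\kappa_m\kappa_{d-m}}\int_{\mathrm{SO}(d)}\mathbb{E}\bigl[\mathrm{Vol}_m\bigl(\mathrm{conv}\,\widetilde\bX_{Q,m}[0,1]\bigr)\bigr]\,\nu(\mathrm{d}Q),
\end{equation*}
which is the assertion. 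I expect the genuine difficulty to be exactly this last step --- controlling the moments of the projected convex hulls by constants \emph{uniform over} $\mathrm{SO}(d)$ --- whereas the pointwise convergence and the integral-geometric identity are already in hand through \eqref{mmu><0}, \eqref{mmu><00}, \eqref{X} and Kubota's formula.
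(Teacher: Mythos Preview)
Your proposal is correct and follows essentially the same route as the paper: apply Kubota's formula to reduce to the projected volumes, use the pointwise limits \eqref{mmu><0}--\eqref{mmu><00} already established in the preamble, and justify the interchange of limit and $\nu$-integral via a uniform-in-$Q$ moment bound obtained from the coordinate-box enclosure together with H\"older's inequality and the maximal estimates for one-dimensional walks. The paper spells out the uniform bound by invoking Doob's maximal inequality and \cite[Corollary~3.8.2, Lemma~3.9.1]{Gut} directly (treating the cases $P_mQ\mu\neq0$ and $P_mQ\mu=0$ separately), whereas you appeal to the same content through \cite[Lemma~A.1]{Wade-Xu_SPA} as in Theorem~\ref{m>1}; your additional remarks on the $\nu$-nullity of $\{Q:P_mQ\mu=0\}$ and on measurability of the limiting integrand are welcome refinements but do not change the structure of the argument.
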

\begin{proof} 
We make use of Kubota's formula which asserts that
\begin{equation}\label{Kubota}
V_m(n)=\binom{d}{m}\frac{\kappa_d}{\kappa_m\kappa_{d-m}}\int_{\mathrm{SO}(d)}\mathrm{Vol}_m\bigl(P_mQ\, \mathrm{conv}\{\bS(0),\ldots,\bS(n)\}\bigr)\nu(\mathrm{d}Q),
\end{equation} 
see \cite[Theorem 1.3]{Schneider} and \cite{Lotz-McCoy-Nourdin-Peccati-Tropp-2020}.
 From \eqref{mmu><0}, \eqref{mmu><00} and \eqref{X} it follows that 
 \begin{align*}
 &\int_{\mathrm{SO}(d)}\lim_{n\to\infty}\frac{\bbE\bigl[\mathrm{Vol}_m\bigl(P_mQ\, \mathrm{conv}\{\bS(0),\ldots,\bS(n)\}\bigr)\bigr]}{n^{(m+1)/2}}\nu(\mathrm{d}Q)\\
&=\int_{\{Q\in\mathrm{SO}(d):P_mQ\mu\neq0\}}\mathbb{E}\bigl[V_m\bigl(\mathrm{conv}\, \widetilde\bX_{Q,m}[0,1]\bigr)\bigr]\nu(\mathrm{d}Q)\\
&=\int_{\mathrm{SO}(d)}\mathbb{E}\bigl[V_m\bigl(\mathrm{conv}\, \widetilde\bX_{Q,m}[0,1]\bigr)\bigr]\nu(\mathrm{d}Q).
\end{align*}
Hence, we are left to justify interchange of the limit and the integral in \eqref{Kubota}.
This will be possible if we show that 
$$
\sup_{n\in\N}\sup_{Q\in\mathrm{SO}(d)}\frac{\bbE\bigl[\mathrm{Vol}_m\bigl(P_mQ\, \mathrm{conv}\{\bS(0),\ldots,\bS(n)\}\bigr)\bigr]}{n^{(m+1)/2}}<\infty.
$$
We can proceed exactly in the same way as in the proof of Theorem \ref{m>1} but this time we work with $\mathrm{Vol}_m$ instead of $V_m$ and with the walk $\{P_mQ\bS(n)\}_{n\ge0}$.
If $P_mQ\mu\neq0$ we use the basis $\{e_1^Q,\ldots ,e_m^Q\}$ of $\R^m$ and obtain
\begin{align*}
\mathrm{Vol}_m\left(P_mQ\, \mathrm{conv}\{\bS(0),\ldots,\bS(n)\}\right)\le &2^m\prod_{i=1}^{m}\max_{0\le k\le n}|\langle P_mQ\bS(k), e^Q_{i}\rangle|.
\end{align*}
By H\"{o}lder's inequality,
$$
\mathbb{E}\Big[\prod_{i=1}^{m}\max_{0\le k\le n}|\langle P_mQ\bS(k), e^Q_{i}\rangle|^p\Big]\le \prod_{i=1}^m\mathbb{E}\bigl[\bigl(\max_{0\le k\le n}|\langle P_mQ\bS(k), e^Q_{i}\rangle|\bigr)^{mp}\bigr]^{1/m}.
$$
Note that $\mathbb{E}[\langle P_mQ\bS(k), e^Q_{i}\rangle]=0$ for $i\in\{2,\dots,m\}$. Thus in this case, Doob's maximal inequality entails 
$$\mathbb{E}\bigl[\bigl(\max_{0\le k\le n}|\langle P_mQ\bS(k), e^Q_{i}\rangle|\bigr)^{mp}\bigr]^{1/m}\le \frac{(mp)^p}{(mp-1)^p}\mathbb{E}\bigl[\bigl(|\langle P_mQ\bS(n), e^Q_{i}\rangle|\bigr)^{mp}\bigr]^{1/m}.
$$ 
This, together with \cite[Corollary 3.8.2]{Gut}, implies that for some $c_1=c_1(p)>0$,
\begin{align*}
\mathbb{E}\bigl[\bigl(\max_{0\le k\le n}|\langle P_mQ\bS(k), e^Q_{i}\rangle|\bigr)^{mp}\bigr]^{1/m}\le c_1\bbE\bigl[|\langle P_mQ\bY_1,e_i^Q\rangle|^{mp}\bigr]^{1/m} n^{p/2}
\le c_1 \bbE\bigl[\lVert\bY_1\rVert^{mp}\bigr]^{1/m}n^{p/2}.
\end{align*}
For $i=1$ we have 
\begin{align*}
\mathbb{E}\bigl[\bigl(\max_{0\le k\le n}|\langle P_mQ\bS(k), e^Q_{1}\rangle|\bigr)^{mp}\bigr]^{1/m}\le \bbE\left[\left(\sum_{k=1}^n\lVert\bY(k)\rVert\right)^{mp}\right]^{1/m}.
\end{align*}
By \cite[Lemma 3.9.1]{Gut}, there is $c_2=c_2(p)>0$ such that
$$\mathbb{E}\bigl[\bigl(\max_{0\le k\le n}|\langle P_mQ\bS(k), e^Q_{1}\rangle|\bigr)^{mp}\bigr]^{1/m}\le 
c_2 \mathbb{E}\bigl[\lVert \bY_1\rVert\bigr]^p n^{p}.$$
It follows that
\begin{equation}\label{bounded_Vol}
\sup_{n\in\N}\sup_{Q\in\mathrm{SO}(d): P_mQ\mu\neq0}\frac{\bbE\bigl[\mathrm{Vol}_m\bigl(P_mQ\, \mathrm{conv}\{\bS(0),\ldots,\bS(n)\}\bigr)\bigr]}{n^{(m+1)/2}}<\infty.
\end{equation}
If $P_mQ\mu=0$, then with the use of the standard basis $\{e_1,\dots,e_m\}$ and through the same reasoning we arrive at
\begin{align*}\mathbb{E}\bigl[\bigl(\max_{0\le k\le n}|\langle P_mQ\bS(k), e_{i}\rangle|\bigr)^{mp}\bigr]^{1/m}&\le c_3\bbE\bigl[|\langle P_mQ\bY_1,e_i\rangle|^{mp}\bigr]^{1/m}
n^{p/2}\\
&\ \le c_3 \bbE\bigl[\lVert\bY_1\rVert^{mp}\bigr]^{1/m}\
n^{p/2}\end{align*}
for some $c_3=c_3(p)>0$ (observe that in this case $\bbE[\langle P_mQ\bS(k), e_{i}\rangle]=0$ for all $i=1,\dots,m$).
Hence,  \eqref{bounded_Vol} is valid also for $Q\in SO(d)$ such that $P_mQ\mu=0$ and the proof is finished.
	\end{proof}

\begin{remark}
	With the same arguments as in Theorem  \ref{Em} we can obtain the corresponding result if  $m=1$ and $p=2$ or $m=d$. This would entail
	$$
	\lim_{n\to \infty} 
	\frac{\mathbb{E}\bigl[V_m(n)\bigr]}{n^{(m+1)/2}}
	=
	\begin{cases}
	\frac{d\kappa_d}{\kappa_1\kappa_{d-1}}\int_{\mathrm{SO}(d)}\mathbb{E}\bigl[V_1\bigl(\mathrm{conv}\,\widetilde\bX_{Q,1}[0,1]\bigr)\bigr]\nu(\mathrm{d}Q), &  m=1,\\
\int_{\mathrm{SO}(d)}\mathbb{E}\bigl[V_d\bigl(\mathrm{conv}\,\widetilde\bX_{Q,d}[0,1]\bigr)\bigr]\nu(\mathrm{d}Q), &  m=d.
	\end{cases}
	$$ 
Then, by Theorems \ref{m=1} and \ref{m>1} it follows that 
$$
\frac{d\kappa_d}{\kappa_1\kappa_{d-1}}\int_{\mathrm{SO}(d)}\mathbb{E}\bigl[V_1\bigl(\mathrm{conv}\,\widetilde\bX_{Q,1}[0,1]\bigr)\bigr]\nu(\mathrm{d}Q)=\lVert\mu\rVert$$ 
and 
$$\int_{\mathrm{SO}(d)}\mathbb{E}\bigl[V_d\bigl(\mathrm{conv}\, \widetilde\bX_{Q,d}[0,1]\bigr)\bigr]\nu(\mathrm{d}Q)=\mathbb{E}\bigl[V_d\bigl(\mathrm{conv}\, \widetilde\bX[0,1]\bigr)\bigr].
$$
		\end{remark}

\section{Variance asymptotics}\label{sec:Var}

In this section, we discuss the variance asymptotics of the intrinsic volumes $V_m(n)$, for $m=1,\dots,d$, under the assumption that moments (of the step of the random walk) of order higher than two are finite. The first result concerns the zero-drift case and it should be compared with \cite[Propositions 3.5]{Wade-Xu_SPA} where the planar case was handled.

\begin{proposition}\label{Var_no_drift} 
Assume that $\mu =0$ and $\bbE[\lVert Y_1\rVert^{mp}]<\infty$ for some $p>2$ .
Suppose that
 $\Prob(\bY_1\in \mathbb{h})=0$ for any affine hyperplane $\mathbb{h}\subset\R^d$.
Then, for each $m\in\{1,\dots,d\}$, 
		\begin{equation*}
		\Var\left(V_m \bigl(\mathrm{conv}\, \bX[0,1] \bigr)\right)<\infty
		\end{equation*} 
		and
		\begin{equation*}
		\lim_{n\to\infty}\frac{\Var(V_m(n))}{n^m}=\Var\left(V_m \bigl(\mathrm{conv}\, \bX[0,1] \bigr)\right).
		\end{equation*}
		\end{proposition}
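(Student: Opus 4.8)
The plan is to upgrade the weak convergence of $V_m(n)/n^{m/2}$ towards $V_m(\conv\,\bX[0,1])$, supplied by Proposition~\ref{FCLT-HULL}, into convergence of the first two moments. First I note that $\bbE[\lVert\bY_1\rVert^{mp}]<\infty$ with $mp\ge p>2$ forces $\alpha=2$, $b_n=\sqrt{n}$ and $\{\bX(t)\}_{t\ge0}$ a centred Brownian motion (by \cite[Theorem~2.6.6]{Ibragimov-Linnik}), so that $n^{m/2}$ is exactly the scaling appearing in Proposition~\ref{FCLT-HULL}. Since
\begin{equation*}
\frac{\Var(V_m(n))}{n^m}
=
\bbE\!\left[\Big(\frac{V_m(n)}{n^{m/2}}\Big)^{2}\right]
-
\left(\bbE\!\left[\frac{V_m(n)}{n^{m/2}}\right]\right)^{2},
\end{equation*}
it suffices to prove that the families $\{V_m(n)/n^{m/2}\}_{n\ge1}$ and $\{(V_m(n)/n^{m/2})^{2}\}_{n\ge1}$ are uniformly integrable: then \cite[Lemma~3.11]{Kallenberg} (applied to the sequence itself and, after the continuous map $x\mapsto x^{2}$, to its squares) gives $\bbE[V_m(n)/n^{m/2}]\to\bbE[V_m(\conv\,\bX[0,1])]$ (in agreement with Theorem~\ref{Vm}) and $\bbE[(V_m(n)/n^{m/2})^{2}]\to\bbE[(V_m(\conv\,\bX[0,1]))^{2}]<\infty$, and subtracting yields both the finiteness of $\Var(V_m(\conv\,\bX[0,1]))$ and the claimed limit.

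The remaining task is a uniform $\mathrm{L}^{p}$ bound on $V_m(n)/n^{m/2}$, which I would obtain exactly as in the proof of Theorem~\ref{m>1}, but with the isotropic scaling $\sqrt{n}$ in all coordinates. Enclosing the walk in the box $\prod_{i=1}^{d}[\lambda_i(n),\Lambda_i(n)]$ with $\lambda_i(n)=\min_{0\le k\le n}S^{(i)}(k)$ and $\Lambda_i(n)=\max_{0\le k\le n}S^{(i)}(k)$, using monotonicity of intrinsic volumes and the formula for intrinsic volumes of a box \cite[Proposition~5.5]{Lotz-McCoy-Nourdin-Peccati-Tropp-2020}, and bounding each edge length $\Lambda_i(n)-\lambda_i(n)$ by $2\max_{0\le k\le n}|S^{(i)}(k)|$, one arrives at
\begin{equation*}
V_m(n)\le 2^{m}\sum_{1\le i_1<\cdots<i_m\le d}\ \prod_{j=1}^{m}\max_{0\le k\le n}\bigl|S^{(i_j)}(k)\bigr|.
\end{equation*}
Minkowski's inequality and the generalized H\"older inequality (splitting each summand into $m$ factors with exponent $m$) then control $\bigl(\bbE[V_m(n)^{p}]\bigr)^{1/p}$ by a finite sum of products of the quantities $\bigl(\bbE[(\max_{0\le k\le n}|S^{(i)}(k)|)^{mp}]\bigr)^{1/(mp)}$; since $\mu=0$ and $\bbE[|Y_1^{(i)}|^{mp}]\le\bbE[\lVert\bY_1\rVert^{mp}]<\infty$ with $mp>2$, Doob's maximal inequality together with the Marcinkiewicz--Zygmund-type estimate \cite[Corollary~3.8.2]{Gut} gives $\bigl(\bbE[(\max_{0\le k\le n}|S^{(i)}(k)|)^{mp}]\bigr)^{1/(mp)}=\mathcal{O}(n^{1/2})$, hence $\sup_{n\ge1}\bbE[(V_m(n)/n^{m/2})^{p}]<\infty$. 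Because $p>2$, this makes $\{V_m(n)/n^{m/2}\}_{n\ge1}$ uniformly integrable and, the squares being bounded in $\mathrm{L}^{p/2}$ with $p/2>1$, also $\{(V_m(n)/n^{m/2})^{2}\}_{n\ge1}$ uniformly integrable, which closes the argument.

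I expect the main (and quite mild) obstacle to be precisely this moment bookkeeping: one has to verify that the assumption $\bbE[\lVert\bY_1\rVert^{mp}]<\infty$ with $p>2$ is exactly what keeps $V_m(n)/n^{m/2}$ bounded in some $\mathrm{L}^{q}$ with $q>2$, equivalently the squares $(V_m(n)/n^{m/2})^{2}$ bounded in some $\mathrm{L}^{r}$ with $r>1$. Here the exponent $2$ is genuinely critical: the limiting second moment is \emph{not} covered by \cite[Theorem~1.1]{Molchanov-Wespi}, which only guarantees $\bbE[(V_m(\conv\,\bX[0,1]))^{q}]<\infty$ for $q<\alpha=2$, so the extra integrability coming from $p>2$ is really used. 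Everything else is a routine assembly of the box bound, \cite[Proposition~5.5]{Lotz-McCoy-Nourdin-Peccati-Tropp-2020}, Doob's maximal inequality, and the standard $\mathrm{L}^{mp}$ estimate for partial sums of i.i.d.\ centred random variables.
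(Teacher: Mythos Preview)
Your proposal is correct and follows essentially the same route as the paper: enclose the walk in a coordinate box, use the intrinsic-volume formula for boxes to bound $V_m(n)$ by a sum of $m$-fold products of running maxima, apply H\"older and a moment bound on the maxima to get $\sup_n\bbE[(V_m(n)/n^{m/2})^{p}]<\infty$, and conclude convergence of first and second moments via uniform integrability and Proposition~\ref{FCLT-HULL}. The only cosmetic differences are that the paper quotes \cite[Lemma~A.1]{Wade-Xu_SPA} for the running-maximum estimate where you invoke Doob plus \cite[Corollary~3.8.2]{Gut}, and the paper phrases the uniform integrability directly for $V_m(n)^{2}/n^{m}$ rather than first for $V_m(n)/n^{m/2}$.
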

		\begin{proof}
		Proceeding analogously as in the proof of Theorem \ref{m>1}
	we easily show that there is $c=c(m,d,p)>0$ such that
	\begin{align*}
	V_m(n)^p\le c\!\!\!\! \sum_{i_1,\dots,i_m\in\{1,\dots,d\}}\prod_{j=1}^m \max_{0\le k\le n}|\langle\bS(k), e_{i_j}\rangle|^p.
	\end{align*}
	H\"{o}lder's inequality implies
	\begin{align*}
	\mathbb{E}\Big[\prod_{j=1}^m \max_{0\le k\le n}|\langle\bS(k), e_{i_j}\rangle|^p\Big]\le \prod_{j=1}^m \big(\mathbb{E}\bigl[\bigl(\max_{0\le k\le n}|\langle\bS(k), e_{i_j}\rangle|\bigr)^{mp}\bigr]\big)^{1/m},
	\end{align*}
	and according to \cite[Lemma A.1]{Wade-Xu_SPA} we obtain
	$$\left( \mathbb{E}\bigl[\bigl(\max_{0\le k\le n}|\langle\bS(k), e_i\rangle|\bigr)^{mp}\bigr]\right)^{1/m}=
	\mathcal{O}(n^{p/2}).
	$$ 
	From this it follows that
	$$\mathbb{E}\left[\left(\frac{V_m(n)^2}{n^m}\right)^{p/2}\right]=\frac{1}{n^{mp/2}}\mathbb{E}\left[V_m(n)^{p}\right]=
	\mathcal{O}(1)$$ 
This means that the sequence $\{V_m(n)^2/n^m\}_{n\in\N}$ is uniformly integrable. Thus, in view of Proposition \ref{FCLT-HULL} and \cite[Lemma 3.11]{Kallenberg} we infer the result. 
\end{proof}

We present a corresponding result for random walks with drift where the scaling is of higher order and we cover the case of $m\in \{2,\ldots ,d\}$. The case of planar random walks was studied in \cite{Wade-Xu_PAMS} and \cite[Proposition 3.6]{Wade-Xu_SPA}.
\begin{proposition}\label{Var_drift}
Assume that $\mu\neq 0$ and $\mathbb{E}[\lVert \bY_1\rVert^{ mp}]<\infty$ for some $p>2$. Suppose that $\Prob(\bY_1\in \mathbb{h})=0$ for any affine hyperplane $\mathbb{h}\subset\R^d$. Then, for $m\in\{2,\dots,d-1\}$, 
		\begin{equation*}
		\Var\left(\int_{\mathrm{SO}(d)}\mathrm{Vol}_m\bigl(\mathrm{conv}\, \widetilde\bX_{Q,m}[0,1]\bigr)\nu(\mathrm{d}Q)\right)<\infty
		\end{equation*} 
		and
		\begin{equation*}
		\lim_{n\to\infty}\frac{\Var(V_m(n))}{n^{m+1}}=\binom{d}{m}^2\frac{\kappa^2_d}{\kappa^2_m\kappa^2_{d-m}}\Var\left(\int_{\mathrm{SO}(d)}\mathrm{Vol}_m\bigl(\mathrm{conv}\, \widetilde\bX_{Q,m}[0,1]\bigr)\nu(\mathrm{d}Q)\right).
		\end{equation*}
\end{proposition}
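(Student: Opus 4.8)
The plan is to follow the pattern of the proof of Proposition~\ref{Var_no_drift}. Write $c_{m,d}=\binom dm\kappa_d/(\kappa_m\kappa_{d-m})$. We shall first establish the distributional convergence
$$
\frac{V_m(n)}{n^{(m+1)/2}}\xrightarrow[n\nearrow\infty]{\mathcal{D}}c_{m,d}\int_{\mathrm{SO}(d)}\mathrm{Vol}_m\bigl(\mathrm{conv}\,\widetilde\bX_{Q,m}[0,1]\bigr)\,\nu(\mathrm{d}Q),
$$
and then show that $\{(V_m(n)/n^{(m+1)/2})^2\}_{n\in\N}$ is uniformly integrable; by \cite[Lemma 3.11]{Kallenberg} this gives convergence of the first two moments of $V_m(n)/n^{(m+1)/2}$, hence of its variance, and since $\Var(V_m(n))/n^{m+1}=\Var(V_m(n)/n^{(m+1)/2})$ this yields the asserted limit (the convergence of first moments is also Theorem~\ref{Em}). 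Throughout we use that $\mathbb{E}[\lVert\bY_1\rVert^{mp}]<\infty$ with $m\ge2$, $p>2$ forces $\alpha=2$, so $\{\bX(t)\}_{t\ge0}$ is a Brownian motion, $b_n=\sqrt n$, and $\bX(1)$ has moments of all orders.

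For the distributional convergence we start from Kubota's formula \eqref{Kubota}, giving $V_m(n)/n^{(m+1)/2}=c_{m,d}\int_{\mathrm{SO}(d)}f_n(Q)\,\nu(\mathrm{d}Q)$ with $f_n(Q)=\mathrm{Vol}_m\bigl(P_mQ\,\mathrm{conv}\{\bS(0),\ldots,\bS(n)\}\bigr)/n^{(m+1)/2}$. By the Skorokhod representation theorem applied to \eqref{FCLT} we may assume, on a suitable probability space, that $\{(\bS(\lfloor nt\rfloor)-\lfloor nt\rfloor\mu)/\sqrt n\}_{t\in[0,1]}\to\{\bX(t)\}_{t\in[0,1]}$ holds $\Prob$-a.s.\ in $(\mathcal{D}^d_0[0,1],\mathrm{dist}_{\mathcal{J}_1})$. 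Then, for every $Q\in\mathrm{SO}(d)$ with $P_mQ\mu\neq0$, exactly the argument of Proposition~\ref{DRIFT} (comparison of $\mathcal{J}_1$-distances, using that $\langle P_mQ\bS(\lfloor n\cdot\rfloor),e_1^Q\rangle/n$ tends uniformly to $\lVert P_mQ\mu\rVert(\cdot)$ while the $e_2^Q,\dots,e_m^Q$-coordinates scaled by $\sqrt n$ converge in $\mathcal{J}_1^{m-1}$ to the corresponding Brownian coordinates) shows that the projected walk, rescaled by $1/n$ along $e_1^Q$ and by $1/\sqrt n$ along $e_2^Q,\dots,e_m^Q$, converges a.s.\ in $(\mathcal{D}^m_0[0,1],\mathrm{dist}_{\mathcal{J}_1})$ to $\{\widetilde\bX_{Q,m}(t)\}_{t\in[0,1]}$ from \eqref{X_Q_space-ime}. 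Since the determinant of this rescaling is $n^{-(m+1)/2}$, Lemma~\ref{H-ctns} (in dimension $m$) and continuity of $\mathrm{Vol}_m$ on $(\mathcal{K}^m_0,\mathrm{dist}_\mathcal{H})$ give $f_n(Q)\to\mathrm{Vol}_m(\mathrm{conv}\,\widetilde\bX_{Q,m}[0,1])$ a.s., and this holds for $\nu$-a.e.\ $Q$ because $\{Q\in\mathrm{SO}(d):P_mQ\mu=0\}$ is $\nu$-null (the pushforward of $\nu$ under $Q\mapsto Q\mu/\lVert\mu\rVert$ is uniform on $\mathbb{S}^{d-1}$ and the set in question maps into a copy of $\mathbb{S}^{d-m-1}$, which is null as $m\ge1$). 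Decomposing $\bS(k)$ along $e_1^Q$ and its orthocomplement and using $\langle P_mQ\mu,e_i^Q\rangle=0$ for $i\ge2$ (and the standard basis when $P_mQ\mu=0$), one gets the $Q$-independent estimate
$$
f_n(Q)\le\frac{2^m}{n^{(m+1)/2}}\Bigl(\max_{0\le k\le n}\lVert\bS(k)-k\mu\rVert+n\lVert\mu\rVert\Bigr)\Bigl(\max_{0\le k\le n}\lVert\bS(k)-k\mu\rVert\Bigr)^{m-1}=:G_n ,
$$
and $G_n$ converges $\Prob$-a.s.\ (the first factor to $\lVert\mu\rVert$, the second to $(\sup_{t\le1}\lVert\bX(t)\rVert)^{m-1}$, since $\max_{0\le k\le n}\lVert\bS(k)-k\mu\rVert/\sqrt n\to\sup_{t\le1}\lVert\bX(t)\rVert$), hence $\sup_nG_n<\infty$ a.s. Dominated convergence with respect to the probability measure $\nu$ now yields $\int_{\mathrm{SO}(d)}f_n(Q)\,\nu(\mathrm{d}Q)\to\int_{\mathrm{SO}(d)}\mathrm{Vol}_m(\mathrm{conv}\,\widetilde\bX_{Q,m}[0,1])\,\nu(\mathrm{d}Q)$ a.s., and the distributional convergence follows.

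Because $p>2$, the uniform integrability of $\{(V_m(n)/n^{(m+1)/2})^2\}_n$ reduces to bounding $\{V_m(n)/n^{(m+1)/2}\}_n$ in $\mathrm{L}^p$, and this is immediate from $V_m(n)/n^{(m+1)/2}\le c_{m,d}G_n$: since $\{\bS(k)-k\mu\}_k$ is a mean-zero random walk and $\mathbb{E}[\lVert\bY_1\rVert^{mp}]<\infty$, Doob's maximal inequality together with \cite[Corollary 3.8.2]{Gut} gives $\mathbb{E}[(\max_{0\le k\le n}\lVert\bS(k)-k\mu\rVert)^{mp}]=\mathcal{O}(n^{mp/2})$, whence $\mathbb{E}[G_n^p]=\mathcal{O}(1)$ by Hölder's inequality. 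For the finiteness of the limit variance, the inclusion established in the derivation of \eqref{X},
$$
\mathrm{conv}\,\widetilde\bX_{Q,m}[0,1]\subseteq[0,\lVert\mu\rVert]\times\bigl[-\sup_{t\le1}\lVert\bX(t)\rVert,\sup_{t\le1}\lVert\bX(t)\rVert\bigr]^{m-1},
$$
gives $\mathrm{Vol}_m(\mathrm{conv}\,\widetilde\bX_{Q,m}[0,1])\le2^{m-1}\lVert\mu\rVert(\sup_{t\le1}\lVert\bX(t)\rVert)^{m-1}$ pointwise and uniformly in $Q$, so Jensen's inequality and Doob's inequality give
$$
\Var\Bigl(\int_{\mathrm{SO}(d)}\mathrm{Vol}_m(\mathrm{conv}\,\widetilde\bX_{Q,m}[0,1])\,\nu(\mathrm{d}Q)\Bigr)\le\int_{\mathrm{SO}(d)}\mathbb{E}\bigl[\mathrm{Vol}_m(\mathrm{conv}\,\widetilde\bX_{Q,m}[0,1])^2\bigr]\,\nu(\mathrm{d}Q)\le4^{m-1}\lVert\mu\rVert^2\,\mathbb{E}\bigl[\bigl(\sup_{t\le1}\lVert\bX(t)\rVert\bigr)^{2(m-1)}\bigr]<\infty .
$$

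The main obstacle is the distributional convergence of $V_m(n)/n^{(m+1)/2}$: Kubota's formula superposes all the orthogonal projections of the hull, whose natural time--space rescaling depends on $Q$ and degenerates on the $\nu$-null set $\{P_mQ\mu=0\}$, so that passing to the limit inside $\int_{\mathrm{SO}(d)}(\cdot)\,\nu(\mathrm{d}Q)$ requires a $Q$-uniform integrable majorant. The point is to estimate the coordinate along the drift by $\max_{0\le k\le n}\lVert\bS(k)-k\mu\rVert+n\lVert\mu\rVert$ rather than through $e_1^Q$, which avoids the blow-up of $1/\lVert P_mQ\mu\rVert$ near that null set; the other ingredients are routine and use $p>2$ only to make this majorant bounded in $\mathrm{L}^p$ with $p/2>1$.
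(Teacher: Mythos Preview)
Your proof is correct and follows the same overall scheme as the paper (convergence in distribution of $V_m(n)/n^{(m+1)/2}$ plus uniform integrability of its square), but you are considerably more careful about the distributional convergence step. The paper simply invokes Theorem~\ref{CONV-DRIFT-2} for each fixed $Q$, then writes ``Kubota's formula and continuous mapping theorem then imply'' convergence of the $\nu$-integral; strictly speaking, marginal convergence in law for each $Q$ does not by itself yield convergence in law of $\int_{\mathrm{SO}(d)}f_n(Q)\,\nu(\mathrm{d}Q)$, so that step is glossed over. Your route through Skorokhod representation, almost-sure convergence of $f_n(Q)$ for $\nu$-a.e.\ $Q$, and a $Q$-uniform majorant $G_n$ with $\sup_nG_n<\infty$ a.s.\ (to apply dominated convergence over the probability measure $\nu$) closes this gap rigorously. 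For the uniform integrability, the paper bounds $V_m(n)$ directly by enclosing the hull in the box $\prod_i[\min_k\langle\bS(k),\bar e_i\rangle,\max_k\langle\bS(k),\bar e_i\rangle]$ in the basis $\bar e_1=\mu/\lVert\mu\rVert,\bar e_2,\dots,\bar e_d$ and then quoting \cite[Lemma~A.1]{Wade-Xu_SPA}; your bound $V_m(n)/n^{(m+1)/2}\le c_{m,d}G_n$ via Kubota is essentially the same estimate expressed through $\lVert\bS(k)-k\mu\rVert$ instead of coordinate-wise maxima, and both give $\sup_n\mathbb{E}[(V_m(n)/n^{(m+1)/2})^p]<\infty$. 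The advantage of your single majorant $G_n$ is that it does double duty, also serving as the dominating function in the $\nu$-integral.
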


\begin{proof}
By Theorem \ref{CONV-DRIFT-2},
	$$
	\frac{\mathrm{Vol}_m\bigl(P_mQ\, \mathrm{conv}\{\bS(0),\ldots,\bS(n)\}\bigr)}{n^{(m+1)/2}}\xrightarrow[n\nearrow\infty]{\mathcal{D}} \mathrm{Vol}_m\bigl(\mathrm{conv}\, \widetilde\bX_{Q,m}[0,1]\bigr).
	$$ 	 
	Kubota's formula \eqref{Kubota} and continuous mapping theorem then imply 
	$$
	\frac{V_m(n)^2}{n^{m+1}} \xrightarrow[n\nearrow\infty]{\mathcal{D}}\left(\binom{d}{m}\frac{\kappa_d}{\kappa_m\kappa_{d-m}}\int_{\mathrm{SO}(d)}\mathrm{Vol}_m\bigl(\mathrm{conv}\, \widetilde\bX_{Q,m}[0,1]\bigr)\nu(\mathrm{d}Q)\right)^2.
	$$
	Hence, according to \cite[Lemma 3.11]{Kallenberg} it is enough to prove that the sequence $\{V_m(n)^2/n^{m+1}\}_{n\in\N}$ is uniformly integrable. By replacing the standard orthonormal basis with the basis $\{\bar e_1,\dots,\bar e_d\}$ such that $\bar e_1=\mu/\lVert\mu \rVert$ and reasoning as in the proof of Proposition \ref{Var_no_drift}, we obtain
	\begin{align*}
	V_m(n)^p\le c \sum_{i_1,\dots,i_m\in\{1,\dots,d\}}\prod_{j=1}^m \max_{0\le k\le n}|\langle\bS(k), \bar e_{i_j}\rangle|^p,
	\end{align*}
	for some $c=c(m,d,p)>0$.
	Due to \cite[Lemma A.1]{Wade-Xu_SPA}, $$\big(\mathbb{E}\bigl[\bigl(\max_{0\le k\le n}|\langle\bS(k), \bar e_{i}\rangle|\bigr)^{mp}\bigr]\big)^{1/m}=\begin{cases}
	\mathcal{O}(n^{p}), &  i=1,\\
	\mathcal{O}(n^{p/2}), &  i\in\{2,\dots,d\}.
	\end{cases}$$ Hence,	H\"{o}lder's inequality then implies
	\begin{align*}
	\mathbb{E}\Big[\prod_{j=1}^m \max_{0\le k\le n}|\langle\bS(k), \bar e_{i_j}\rangle|^p\Big]\le \prod_{j=1}^m \big(\mathbb{E}\bigl[\bigl(\max_{0\le k\le n}|\langle\bS(k), \bar e_{i_j}\rangle|\bigr)^{mp}\bigr]\big)^{1/m}=
	\mathcal{O}(n^{(m+1)p/2}),
	\end{align*}
that is
	$$\mathbb{E}\left[\left(\frac{V_m(n)^2}{n^{m+1}}\right)^{p/2}\right]=\frac{1}{n^{(m+1)p/2}}\mathbb{E}\left[V_m(n)^{p}\right]=
	\mathcal{O}(1).$$
	This yields uniform integrability of 
	the process $\{V_m(n)^2/n^{m+1}\}_{n\in\N}$.
\end{proof}

\begin{remark}
(i) We could apply the same arguments as in Propositions \ref{Var_no_drift} and \ref{Var_drift} to show  convergence of moments of the appropriately rescaled sequence $\{V_m(n)\}_{n\in\N}$. More precisely, if there is  $p>1$ such that $\bbE[\lVert Y_1\rVert^{mp}]<\infty,$ then  in the case when $\mu=0$,
		$$\bbE\left[V_m \bigl(\mathrm{conv}\, \bX[0,1] \bigr)^p\right]<\infty\qquad \text{and}\qquad 
		\lim_{n\to\infty}\frac{\bbE[V_m(n)^p]}{n^{mp/2}}=\bbE\left[V_m \bigl(\mathrm{conv}\, \bX[0,1] \bigr)^p\right].$$
		If $\mu\neq0$, then
			$$\bbE\left[\left(\int_{\mathrm{SO}(d)}\mathrm{Vol}_m\bigl(\mathrm{conv}\, \widetilde\bX_{Q,m}[0,1]\bigr)\nu(\mathrm{d}Q)\right)^p\right]<\infty$$ and
		$$\lim_{n\to\infty}\frac{\bbE[V_m(n)^p]}{n^{(m+1)p/2}}=\binom{d}{m}^p\frac{\kappa^p_d}{\kappa^p_m\kappa^p_{d-m}}\bbE\left[\left(\int_{\mathrm{SO}(d)}\mathrm{Vol}_m\bigl(\mathrm{conv}\, \widetilde\bX_{Q,m}[0,1]\bigr)\nu(\mathrm{d}Q)\right)^p\right].$$
		(ii) By Theorem \ref{CONV-DRIFT-2}, if $\mu\neq0$, we obtain 
		$$
		\frac{V_d(n)}{n^{(d+1)/2}}\xrightarrow[n\nearrow\infty]{\mathcal{D}} V_d\bigl(\mathrm{conv}\,\widetilde\bX[0,1]\bigr).
		$$ Thus, since $\{V_d(n)^2/n^{d+1}\}_{n\in\N}$ is uniformly integrable, we have 
		$$\Var\left(V_d\bigl(\mathrm{conv}\,\widetilde\bX[0,1]\bigr)\right)<\infty\qquad \text{and} \qquad \lim_{n\to\infty}\frac{\Var(V_d(n))}{n^{d+1}}=\Var\left(V_d\bigl(\mathrm{conv}\,\widetilde\bX[0,1]\bigr)\right).$$
		In particular,
		$$\Var\left(\int_{\mathrm{SO}(d)}\mathrm{Vol}_d\bigl(\mathrm{conv}\, \widetilde\bX_{Q,d}[0,1]\bigr)\nu(\mathrm{d}Q)\right)=\Var\left(V_d\bigl(\mathrm{conv}\,\widetilde\bX[0,1]\bigr)\right).$$ 
		(iii) By Theorem \ref{V_1_p_drift} (see also Theorem \ref{m=1}), we have 
		$$\frac{V_1(n)}{n}
		\xrightarrow[n\nearrow\infty]{\mathbb{P}\text{-a.s.}} \Vert \mu\Vert,$$ which, together with uniform integrability of $\{V_1(n)^2/n^{2}\}_{n\in\N}$, implies $$\lim_{n\to\infty}\frac{\Var(V_1(n))}{n^{2}}=0.$$ 
\end{remark}	
	
In \cite[Theorem 1.1]{Wade-Xu_PAMS} the authors established the variance asymptotics of the perimeter (mean width) of the convex hull for any planar random walk with non-zero drift and finite second moment. The appropriate scaling turns out to be of linear order.  The perimeter of the convex hull of the random walk corresponds to the sequence $\{V_1(n)\}_{n\in\N}$ in higher dimensions and we therefore   
conjecture  that the appropriate scaling for the sequence  $\{\Var(V_1(n))\}_{n\in\N}$ in every dimension $d\ge 1$ is of order $n$ as well.	We finish the article with a partial result in this direction which can be viewed as an extension of \cite[Theorem 2.3]{Snyder-Steele} to higher dimensions.

\begin{proposition}\label{Var_m=1}
	Assume that 
	$\mathbb{E}[\lVert \bY_1\rVert^{2}]<\infty$. Then
	$$\Var(V_1(n))\le n\,\bbE[\lVert\bY_1-\mu\rVert^2],\qquad n\in \bbN.$$
\end{proposition}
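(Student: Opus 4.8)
The plan is to represent $V_1(n)$ through the mean width of the hull and then to apply the Efron--Stein (bounded differences) inequality, with essentially all of the difficulty concentrated in one clean estimate: resampling a single increment of the walk changes $V_1(n)$ by at most the Euclidean norm of the change of that increment. Given that estimate, summing over the $n$ increments and using that for i.i.d.\ mean-$\mu$ vectors $\bbE[\|\bY_i-\bY_i'\|^2]=2\bbE[\|\bY_1-\mu\|^2]$ yields exactly the asserted bound.

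First I would record the representation. By \eqref{MW}, \eqref{V_1=mean_width} and $\varpi_d=d\kappa_d$ we have, for any convex body $A$,
$$V_1(A)=\frac{1}{\kappa_{d-1}}\int_{\mathbb{S}^{d-1}}s_A(\theta)\,\sigma(\mathrm{d}\theta)=\frac{1}{2\kappa_{d-1}}\int_{\mathbb{S}^{d-1}}\bigl(s_A(\theta)+s_A(-\theta)\bigr)\,\sigma(\mathrm{d}\theta),$$
the second equality using the invariance of $\sigma$ under $\theta\mapsto-\theta$. For $A_n=\conv\SRange$ one has $s_{A_n}(\theta)=\max_{0\le k\le n}\langle\theta,\bS(k)\rangle$, whence
$$V_1(n)=\frac{1}{2\kappa_{d-1}}\int_{\mathbb{S}^{d-1}}R_n(\theta)\,\sigma(\mathrm{d}\theta),\qquad R_n(\theta):=\max_{0\le k\le n}\langle\theta,\bS(k)\rangle-\min_{0\le k\le n}\langle\theta,\bS(k)\rangle,$$
i.e.\ an average over directions of the width of the walk. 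I would also note the elementary identity $\int_{\mathbb{S}^{d-1}}|\langle\theta,v\rangle|\,\sigma(\mathrm{d}\theta)=2\kappa_{d-1}\|v\|$ for $v\in\R^d$, which follows from $s_{[0,v]}(\theta)=\langle\theta,v\rangle^+$, the $\theta\mapsto-\theta$ symmetry, and the fact $V_1([0,v])=\|v\|$ established (stated there for $v=\mu$, but with a proof that uses nothing special about $\mu$) inside the proof of Theorem \ref{V_1_p_drift}.

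Next comes the one-step Lipschitz estimate. Fix $i\in\{1,\dots,n\}$, let $\bY_i'$ be an independent copy of $\bY_i$, and let $V_1^{(i)}(n)$ be $V_1$ of the convex hull of the walk obtained by replacing $\bY_i$ with $\bY_i'$; this replacement leaves $\bS(0),\dots,\bS(i-1)$ unchanged and translates $\bS(i),\dots,\bS(n)$ by the common vector $v:=\bY_i'-\bY_i$. Fix a direction $\theta$, put $c:=\langle\theta,v\rangle$, and split the running maximum and the running minimum of $\langle\theta,\bS(\cdot)\rangle$ over the blocks $\{0,\dots,i-1\}$ and $\{i,\dots,n\}$. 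Using the elementary fact that, for real $a,b$, the quantities $\max(a,b+c)-\max(a,b)$ and $\min(a,b+c)-\min(a,b)$ both lie in the interval with endpoints $0$ and $c$, one sees that under the replacement the running maximum and the running minimum of $\langle\theta,\bS(\cdot)\rangle$ both shift by amounts in that same interval, so $R_n(\theta)$ changes by at most $|c|=|\langle\theta,v\rangle|$. Integrating over $\mathbb{S}^{d-1}$ and invoking the identity above,
$$\bigl|V_1(n)-V_1^{(i)}(n)\bigr|\le\frac{1}{2\kappa_{d-1}}\int_{\mathbb{S}^{d-1}}|\langle\theta,v\rangle|\,\sigma(\mathrm{d}\theta)=\|v\|=\|\bY_i-\bY_i'\|.$$
Since $V_1(n)\le(\varpi_d/\kappa_{d-1})\sum_{i=1}^n\|\bY_i\|$ lies in $\mathrm{L}^2$, the Efron--Stein inequality then gives
$$\Var\bigl(V_1(n)\bigr)\le\frac12\sum_{i=1}^n\bbE\bigl[(V_1(n)-V_1^{(i)}(n))^2\bigr]\le\frac12\sum_{i=1}^n\bbE\bigl[\|\bY_i-\bY_i'\|^2\bigr]=\frac12\cdot n\cdot 2\,\bbE\bigl[\|\bY_1-\mu\|^2\bigr]=n\,\bbE\bigl[\|\bY_1-\mu\|^2\bigr].$$

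The single delicate point is that the Lipschitz constant in the penultimate display must be $1$, not $2$, and this is precisely why I would symmetrize and work with the width $R_n(\theta)=s_{A_n}(\theta)+s_{A_n}(-\theta)$ rather than with $s_{A_n}(\theta)$ alone: translating the tail of the walk by $v$ moves the running maximum and the running minimum of $\langle\theta,\bS(\cdot)\rangle$ in the same direction, so their difference moves by at most $|\langle\theta,v\rangle|$, and after integration the prefactor $\tfrac{1}{2\kappa_{d-1}}$ absorbs $\int_{\mathbb{S}^{d-1}}|\langle\theta,v\rangle|\,\sigma(\mathrm{d}\theta)$ exactly. A bound on $|s_{A_n}(\theta)-s_{A_n^{(i)}}(\theta)|$ by itself would only produce the constant $4$ in place of $1$ (and a martingale-difference decomposition, which one could use instead of Efron--Stein, would not remove this loss without the same symmetrization trick).
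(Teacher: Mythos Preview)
Your argument is correct and shares the architecture of the paper's proof: both express $V_1(n)$ through the mean-width integral, invoke the Efron--Stein inequality (cited in the paper as \cite[remark on page 755]{Steele}), and control the effect of resampling a single increment direction by direction. The execution differs, and your version is the sharper one. The paper works with $s_{A_n}(\theta)=\max_k\langle\bS(k),\theta\rangle$ alone, bounds the change of this maximum under resampling by $|\langle\bY_i-\bY_i',\theta\rangle|$, and then passes through Cauchy--Schwarz on the spherical integral together with $|\langle v,\theta\rangle|\le\|v\|$; as you anticipate in your closing paragraph, this route loses a constant---indeed the last two displayed inequalities in the paper's chain tacitly require $\varpi_d\le\kappa_{d-1}$, which fails for every $d\ge1$. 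You instead symmetrize to the width $R_n(\theta)=s_{A_n}(\theta)+s_{A_n}(-\theta)$, observe that under the resampling the running maximum and minimum of $\langle\theta,\bS(\cdot)\rangle$ shift in the \emph{same} direction so that $R_n$ changes by at most $|\langle\theta,v\rangle|$ rather than $2|\langle\theta,v\rangle|$, and then evaluate $\int_{\mathbb{S}^{d-1}}|\langle\theta,v\rangle|\,\sigma(\mathrm d\theta)=2\kappa_{d-1}\|v\|$ exactly via $V_1([0,v])=\|v\|$. This delivers the Lipschitz constant $1$ with no slack and hence the bound exactly as stated.
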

\begin{proof}
We follow the approach from \cite[Theorem 2.3]{Snyder-Steele}.	
We clearly have
\begin{align*}
s_{\mathrm{conv}\{\bS(0),\dots,\bS(n)\}}(\theta)
=
\max_{0\le k\le n}\langle \bS(k),\theta \rangle ,\qquad \theta \in \mathbb{S}^{d-1}.
\end{align*}
Thus, by \eqref{MW} and \eqref{V_1=mean_width}, 
$$
V_1(n)
=
\frac{1}{\kappa_{d-1}}\int_{\mathbb{S}^{d-1}}\max_{0\le k\le n}\langle \bS(k),\theta \rangle \, \sigma (\mathrm{d}\theta).
$$
	Further, let $\{\bY'_i\}_{i\in\N}$ be an independent copy of $\{\bY_i\}_{i\in\N}$. For $i\in\N$ we  set \begin{equation}\label{S'}\bS^i(n)=
	\begin{cases}
	\bS(n), &  n<i,\\
	\bS(n)-\bY_i+\bY_i', &  i\ge n,
	\end{cases}
	\end{equation}
	and 
	$$
	V_1^i(n)=\frac{1}{\kappa_{d-1}}\int_{\mathbb{S}^{d-1}}\max_{0\le k\le n}\langle \bS^i(k),\theta \rangle \, \sigma (\mathrm{d}\theta).
	$$
	 According to \cite[remark on page 755]{Steele}, \begin{equation}\begin{aligned}\label{var}\Var(V_1(n))&\le \frac{1}{2}\sum_{i=1}^n\bbE\left[ \bigl(V_1(n)-V_1^i(n)\bigr)^2\right]\\&=\frac{1}{2\kappa^2_{d-1}}\sum_{i=1}^n\bbE\left[ \left(\int_{\mathbb{S}^{d-1}}\bigl(\max_{0\le k\le n}\langle \bS^i(k),\theta \rangle-\max_{0\le k\le n}\langle \bS(k),\theta \rangle\bigr) \sigma (\mathrm{d}\theta)\right)^2\right].\end{aligned}\end{equation}
	From \eqref{S'}, for any $\theta \in \mathbb{S}^{d-1}$, we have \begin{align*}\langle \bS^i(k),\theta \rangle&=
	\begin{cases}
	\langle \bS(k),\theta\rangle, &  k<i,\\
	\langle \bS(k),\theta\rangle-\langle\bY_i,\theta\rangle+\langle\bY_i',\theta\rangle, &  i\ge k
	\end{cases}\\&\le \langle\bS(k),\theta\rangle+|\langle\bY_i,\theta\rangle-\langle\bY_i',\theta\rangle|.
	\end{align*}
	Hence, $$\max_{0\le k\le n}\langle\bS^i(k),\theta\rangle-\max_{0\le k\le n}\langle\bS(k),\theta\rangle\le |\langle\bY_i,\theta\rangle-\langle\bY_i',\theta\rangle|.$$ 
	By symmetry, we can replace the left-hand side of the above inequality with its absolute value.
	This together with  \eqref{var} implies
	\begin{align*}\Var(V_1(n))&\le \frac{1}{2\kappa^2_{d-1}}\sum_{i=1}^n\bbE\left[ \left(\int_{\mathbb{S}^{d-1}}|\langle\bY_i,\theta\rangle-\langle\bY_i',\theta\rangle| \sigma (\mathrm{d}\theta)\right)^2\right]\\
	&\le \frac{1}{2\kappa_{d-1}}\sum_{i=1}^n \int_{\mathbb{S}^{d-1}}\bbE\left[|\langle\bY_i-\bY_i',\theta\rangle|^2\right] \sigma (\mathrm{d}\theta)\\
	&\le \frac{n\, \bbE[\lVert\bY_1-\mu\rVert^2]}{\kappa_{d-1}} \int_{\mathbb{S}^{d-1}}\lVert\theta\rVert^2 \sigma (\mathrm{d}\theta)\\
	&\le n\,\bbE[\lVert\bY_1-\mu\rVert^2],\end{align*}
	and the proof is finished.
\end{proof}

\section*{Acknowledgement} 
\noindent
We thank M.\ Puljiz for discussions and ideas related to the proof of Theorem \ref{ST}.
This work has been supported by \textit{Deutscher Akademischer Austauschdienst} (DAAD) and \textit{Ministry of Science and Education of the Republic of Croatia} (MSE) via project \textit{Random Time-Change and Jump Processes}.
Financial support through the  
\textit{Alexander-von-Humboldt Foundation} under project No.\ HRV 1151902 HFST-E and \textit{Croatian Science Foundation} under project 8958 (for N.\ Sandri\'c), and \textit{Croatian Science Foundation} under project 4197 (for S.\ \v Sebek) is gratefully acknowledged.

\bibliographystyle{abbrv}
\bibliography{HULL}

\end{document}